\documentclass[10pt]{article}

\usepackage{amssymb,latexsym,amsmath,amsthm,verbatim}
\usepackage{graphicx,epsfig,epstopdf,amssymb,color}

\bibliographystyle{plain}

\setlength{\topmargin}{0.0 in}
\setlength{\oddsidemargin}{-0.2in}
\setlength{\evensidemargin}{-0.2in}
\setlength{\textheight}{8.5in}
\setlength{\textwidth}{6.0in}
\setlength{\marginparwidth}{0.0in}
\setlength{\marginparsep}{0.0in}
\setlength{\marginparpush}{0.0in}
\setlength{\footskip}{0.5in}

\newcommand{\eps}{\epsilon}
\newcommand{\del}{\partial}
\newcommand{\be}{\begin{equation}}
\newcommand{\ee}{\end{equation}}
\newcommand{\bea}{\begin{eqnarray}}
\newcommand{\eea}{\end{eqnarray}}

\newcommand{\ba}{\begin{array}}
\newcommand{\ea}{\end{array}}
\newcommand{\R}{\mathbb{R}}
\newcommand{\Z}{\mathbb{Z}}
\newcommand{\N}{\mathbb{N}}

\renewcommand{\:}{\thinspace :}

\newtheorem{theorem}{Theorem}[section]
\newtheorem{lemma}[theorem]{Lemma}
\newtheorem{proposition}[theorem]{Proposition}

\theoremstyle{definition}

\newtheorem{remark}[theorem]{Remark}

\newtheorem{hyp}[theorem]{Hypothesis}

\begin{document}
\baselineskip=14pt
\title{Existence and uniqueness of traveling waves in a class of unidirectional lattice differential equations }

\author{Aaron Hoffman and  Benjamin Kennedy}

\date{}

\maketitle

\renewcommand{\thefootnote}{ }
\footnotetext{AMS Subject Classifications: 34C37, 34K10, 37L60.}

\begin{abstract}
We prove the existence and uniqueness, for wave speeds sufficiently large, of monotone traveling wave solutions connecting stable to unstable spatial equilibria for a class of $N$-dimensional lattice differential equations with unidirectional coupling.  This class of lattice equations includes some cellular neural networks, monotone systems, and semi-discretizations for hyperbolic conservation laws with a source term.  We obtain a variational characterization of the critical wave speed above which monotone traveling wave solutions are guaranteed to exist.  We also discuss non-monotone waves, and the coexistence of monotone and non-monotone waves. 
\end{abstract}

\section{Introduction} \label{INT}

Let $\Z^D$ be the $D$-dimensional integer lattice, and let $p := (p_1,\ldots,p_D)$ be a point of the lattice.  We choose and fix a vector $\xi = (\xi_1,\ldots,\xi_D) \in \R^D$ such that $\|\xi\| = 1$.  We define a map 
\[
\chi : \Z^D \to (\Z^D)^{N+1}, \ \ \chi(p) = (p,\chi_1(p),\ldots,\chi_N(p))
\]
satisfying the following: first, the translation-invariance condition
\[
\chi(p + q) = \chi(p) + q
\]
holds for all $p,q \in \Z^D$; second, all points $\chi_i(p) \in  \Z^D$ lie on one side of the hyperplane through $p$ determined by $\xi$:
\begin{equation}\label{eq:unidir}
\xi \cdot (p - \chi_i(p)) \geq 0, \ \ i \in \{1,\ldots,N\}.  
\end{equation}
We shall refer to this latter condition as \emph{unidirectionality} (with respect to $\xi$).  If we take $D = 2$, $N=3$, and $\xi  = (1,1)$, for example, the map 
\[
\chi((k,j)) = ((k,j),(k-1,j),(k,j-1),(k-1,j-1))
\]
satisfies both the translation-invariance and unidirectionality conditions.  

In this paper we study differential equations on $\Z^D$ with first-order unidirectional coupling: to each $p \in \Z^D$ we associate a function $u_p(\cdot) : \R \to \R$, and consider equations of the form
\begin{equation}\label{MODEQ}
u_p'(t) = g(u_{p}(t),u_{\chi_1(p)}(t),\ldots,u_{\chi_N(p)}(t)). 
\end{equation}\noindent

Equation $\eqref{MODEQ}$ is an example of a lattice differential equation (LDE).  LDEs arise in diverse applications and have been extensively studied (see, for example, \cite{chua:1988a,chua:1988b,cahn:1999,keener:1987,weinberger:1982,mallet-paret:2003,bates:2003} and the references therein).  The unidirectional coupling in \eqref{MODEQ} --- that is, the requirement that all of the lattice points determining $u_p'(t)$ lie on one side of a hyperplane through $p$ --- admittedly makes \eqref{MODEQ} into a specialized class of problems.   Such equations do, however, arise: in addition to models where spatial interactions are genuinely discrete and unidirectional, \eqref{MODEQ} includes discretizations of certain first-order PDEs and models in which the lattice sites represent agents or categories (rather than spatial locations).  We discuss such examples at greater length below in Section \ref{subsec:ex}.

A \emph{traveling wave solution of speed $c$ in direction $\xi$} of \eqref{MODEQ} is a solution that satisfies, for all lattice points $p$ and times $t$,
\begin{equation}
u_p(t) = \phi(\xi \cdot p  - ct), \label{eq:ansatzN}
\end{equation}
where $c > 0$, $\xi$ is as in \eqref{eq:unidir}, and the \emph{wave profile} $\phi$ is a function from $\R$ to $\R$.  We shall adhere to this notation throughout, and shall refer to a traveling wave solution of speed $c$ in direction $\xi$ simply as a \emph{traveling wave solution}.  

All instances of \eqref{MODEQ} we consider here will have the equilibrium solutions
\[
u_p(t) \equiv 0 \ \forall \ t,p \ \ \mbox{and} \ \ u_p(t) \equiv 1 \ \forall \ t,p.
\]
If the wave profile $\phi$ for a traveling wave solution satisfies the boundary conditions
\[
\phi(x) \to 1 \ \mbox{as} \ x \to -\infty \ \mbox{and} \ \phi(x) \to 0 \ \mbox{as} \ x \to \infty
\]
we shall say that the solution \emph{connects $1$ to $0$} (though observe, in this case, that $u_p(-\infty) = 0$ and $u_p(\infty) = 1$ for all $p \in \Z^D$).   If $\phi(x)$ is strictly decreasing for all $x \in \R$, we say that the solution is \emph{decreasing}.  If $\phi(x)$ is strictly decreasing for all $x$ less than some $x_0 \in \R$, we say that $\phi$ is \emph{initially decreasing}.  For brevity, we shall often refer to traveling wave solutions connecting $1$ to $0$ as \emph{fronts}.  Abusing terminology, we shall refer to the corresponding wave profile $\phi$ as a front as well.  It is the fronts --- the traveling wave solutions connecting $1$ to $0$ --- that occupy most of our attention in this paper.  

If $\phi : \R \to \R$ is the wave profile for a traveling wave solution of \eqref{MODEQ}, then $\phi$ solves the \emph{wave profile equation} 
\begin{eqnarray} 
c\phi'(x) = - g(\phi(x),\phi(x - \kappa_1),\ldots,\phi(x-\kappa_N)), \label{eq:wpe} 
\end{eqnarray}
where $\kappa_i = \xi \cdot (p - \chi_i(p))$ for $i = 1,\ldots, N$.  Conversely, if $\phi$ satisfies $\eqref{eq:wpe}$, then $u_p(t) = \phi(\xi \cdot p - ct)$ satisfies $\eqref{MODEQ}$.  The unidirectionality condition \eqref{eq:unidir} implies that each $\kappa_i$ is nonnegative, and so \eqref{eq:wpe} is a delay equation (rather than an equation of mixed type).  The relative tractability of \eqref{eq:wpe} in this case is the primary reason that we consider unidirectional models.  By modifying the function $g$ and the constant $N$ we can, without loss of generality, assume that $\kappa_i > 0$ for $i \ge 1$, and we do so henceforth.   

If a particular function $\phi : \R \to \R$ is understood, when convenient we shall write
\[
\Phi(x) = (\phi(x),\phi(x - \kappa_1),\ldots,\phi(x - \kappa_N)) \in \R^{N+1},  
\]
and shall write $\eqref{eq:wpe}$ in the more compact form 
\begin{equation*}
c\phi'(x) = - g(\Phi(x)).
\end{equation*}

Since we are assuming that the spatial equilibria $0$ and $1$ solve \eqref{MODEQ}, the constant functions $0$ and $1$ are equilibrium solutions of \eqref{eq:wpe} as well.  The traveling wave solution connects $1$ to $0$ if and only if $\phi$ satisfies the boundary conditions
\begin{eqnarray}
\phi(-\infty) = 1; \label{eq:negbc} \\
\phi(\infty) = 0. \label{eq:posbc}
\end{eqnarray}

In this paper we study the existence, uniqueness, and monotonicity of traveling wave solutions of \eqref{MODEQ} connecting $1$ to $0$.  This amounts to considering the existence, uniqueness, and monotonicity of solutions of the boundary value delay equation $(\ref{eq:wpe},\ref{eq:negbc},\ref{eq:posbc})$.  (Since any time-translation of a traveling wave solution is still a traveling wave solution, by uniqueness we shall always mean uniqueness up to time translation --- equivalently, uniqueness of $\phi$ up to translation in $x$.)  Our basic approach for existence and uniqueness is to exploit the one-dimensionality of the unstable manifold at $1$ of \eqref{eq:wpe}, denoted $W^u(1)$.  We show that, for $c$ sufficiently large, the initially decreasing branch of $W^u(1)$ remains monotone forever, and is in fact a front.  Using an Implicit Function Theorem argument, we show that the set $\mathcal{U}$ of wave speeds $c$ for which fronts exist is open and, moreover, that some of the corresponding fronts are non-monotone.  We also explore the behavior of waves as $c$ approaches and reaches the boundary of $\mathcal{U}$.   

To make use of results on invariant manifolds for \eqref{eq:wpe}, we shall need to work in the standard phase space for delay equations: with $r = \max_j(\kappa_j)$, we write $C = C([-r,0],\R)$ for the Banach space of continuous real-valued functions from $[-r,0]$ to $\R$, equipped with the sup norm.  If $y$ is any continuous function defined on $[\tau - r,\tau]$, in the usual way we write $y_\tau$ for the member of $C$ given by
\[
y_\tau(s) = y(\tau + s), \ s \in [-r,0].
\] 
Given $y_0 \in C$, our hypotheses on $g$ will guarantee that $y_0$ extends uniquely to a function $y: [-r,T) \to \R$ such that $y$ solves \eqref{eq:wpe} for all $t > 0$ and $T \leq \infty$ is maximal (see, for example, Chapter 2 of \cite{hale:1993}).    
 
We now describe the conditions that we impose on the feedback function $g$ in \eqref{MODEQ}.   We write the general point of $\R^{N+1}$ as $s = (s_0,s_1,\ldots,s_N)$, and for $a \in \R$ we write ${\bf a} = (a,a,\ldots,a) \in \R^{N+1}$.  The reader may find it helpful to keep the following example in mind, which furnishes a primary motivation for this paper: $D = 1$, $N = 1$, $\xi = 1$, and, for all $p \in \Z$,  
\begin{equation}\label{eq:vzexcov1}
u'_p = -u_p + 2u_{p-1} - u_{p-1}^2. 
\end{equation}
\eqref{eq:vzexcov1} arises in a clock model for a dilute gas described in \cite{vanzon:1998}.  We shall discuss this equation further in section \ref{subsec:ex}.  In the case of \eqref{eq:vzexcov1}, $g$ becomes 
\[
g: \R^2 \to \R, \ \ g(s_0,s_1) = -s_0 + 2s_1 - s_1^2.
\]
The following hypotheses (G1) and (G2) serve to generalize certain features of this motivating function $g$.   Hypothesis (G3) will apply to certain examples that we consider in Section \ref{EXAMPLES}.

\begin{hyp}\label{hyp:hyp}
$g: \R^{N+1} \to \R$ is a locally Lipschitz function, $C^1$ on neighborhoods of ${\bf 0}$ and ${\bf 1}$.  $g$ satisfies the following:
\begin{itemize}
\item ({\bf G1})
\begin{itemize}
\item ({\bf G1.1}) $g({\bf 0}) = g({\bf 1}) = 0$; furthermore $g({\bf a}) \ne 0$ for any $a \in (0,1)$.
\item ({\bf G1.2}) $\frac{\partial g}{\partial s_0}({\bf a}) < 0$ for ${\bf a} = {\bf 0},{\bf 1}$; $\frac{\partial g}{\partial s_i}({\bf a}) \geq 0$ for $i = 1,\ldots, N$ and for ${\bf a} = {\bf 0},{\bf 1}$; and 
\[
\sum_{i=0}^N \frac{\partial g}{\partial s_i}({\bf 1})  < 0 < \sum_{i=0}^N \frac{\partial g}{\partial s_i}({\bf 0}).
\] 
\item ({\bf G1.3}) There is some  $\beta \in \R^{N+1}$ with $\sum_{i=0}^N \beta_i > 0$, $\beta_i > 0$ for $i = 1,\ldots, N$, and $\beta_0 < 0$, such that $0 < g(s) < \sum_{i=0}^N \beta_i s_i$ whenever 
\[
0 < s_0 < s_i < 1 \ \mbox{for all} \  i = 1, \ldots, N.
\]
\end{itemize}
\item ({\bf G2}) If $1 < \min_i s_i$ and $s_0 = \max_i s_i$, then $g(s) < 0$.
\item ({\bf G3}) There are constants $M_0 > 0$ and $\eta \in (0,1)$ such that, whenever $|s_0| = \max_i|s_i|$ and $|s_0| \geq M_0$, then  $\frac{|g(s) + s_0|}{|s_0|} \leq \eta$.
\end{itemize}
\end{hyp}

Conditions (G1) restrict the feedback function $g$ in the cube $[0,1]^{N+1}$.  (G1.1) ensures that $0$ and $1$ are equilibria of \eqref{MODEQ} and \eqref{eq:wpe}.  It further ensures that there are no other equilibria between $0$ and $1$ at which a wave emanating from $1$ might get stuck. We will show below that (G1.2) guarantees that $1$ is a hyperbolic equilibrium with a one-dimensional unstable manifold and that, for $c$ large enough, the dominant eigenvalue for the linearization of \eqref{eq:wpe} about $0$ is real and negative.  (G1.3) guarantees that any initially decreasing solution of $(\ref{eq:wpe},\ref{eq:negbc})$ remains strictly decreasing at least until it crosses zero (see Lemma $\ref{lem:decrease}$); moreover, for $c$ large enough relative to $\beta$, such a solution cannot overshoot $0$ (see Proposition $\ref{prop:exist}$).  All the conditions (G1) together will thus guarantee that, for $c$ large enough, the initially decreasing branch of the unstable manifold at $1$ is in fact a strictly decreasing front.  The second inequality in condition (G1.3) can always be satisfied with some $\beta$; the idea is, given $g$, to choose $\beta$ to yield maximal information about the range of $c$ for which decreasing fronts exist.  

Conditions (G2) and (G3) restrict the coupling function $g$ away from the cube $[0,1]^N$.  We have given conditions on $g$ that are easy to verify, but our application of these conditions is embodied in the following two lemmas; in our main theorems, (G2) and (G3) can replaced by any conditions that make the conclusions of Lemmas \ref{lem:G21} and \ref{lem:G3}, respectively, hold.  

\begin{lemma}\label{lem:G21}
Suppose that (G2) holds.  Then solutions of $\eqref{eq:wpe}$ with initial condition $\phi_0 \in C[-r,0]$ satisfying 
\[
 1 < \phi_0(s) \leq \phi_0(0) \ \mbox{for all} \ s \in [-r,0]
\]
are strictly increasing for $t > 0$.
\end{lemma}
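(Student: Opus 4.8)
The plan is to treat \eqref{eq:wpe} as a delay differential equation in the independent variable $t$ with initial datum $\phi_0 \in C[-r,0]$, and to prove $\phi' > 0$ on the entire interval of existence $[0,T)$ by a continuation argument. First I would note that, since $\phi$ is continuous on $[-r,T)$ and $g$ is continuous, the map $t \mapsto g(\Phi(t))$ is continuous, so $\phi$ is $C^1$ on $[0,T)$ (the derivative at $0$ being one-sided) with $c\phi'(t) = -g(\Phi(t))$ there. It therefore suffices to prove that $\phi'(t) > 0$ for every $t \in [0,T)$, since a function with everywhere-positive derivative on an interval is strictly increasing on it.

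For the base case, observe that the hypotheses of (G2) hold at $t = 0$: the datum satisfies $1 < \phi_0(s) \le \phi_0(0) = \phi(0)$ for all $s \in [-r,0]$, so every component of $\Phi(0)$ exceeds $1$ and its zeroth component $\phi(0)$ is the largest. Hence $g(\Phi(0)) < 0$ and $\phi'(0) = -g(\Phi(0))/c > 0$. Now set $t^* = \sup\{\, t \in [0,T) : \phi'(\tau) > 0 \text{ for all } \tau \in (0,t)\,\}$. Continuity of $\phi'$ together with $\phi'(0) > 0$ gives $t^* > 0$, and $\phi' > 0$ on $(0,t^*)$, so $\phi$ is strictly increasing on $[0,t^*]$ with $\phi(t) \ge \phi(0) > 1$ for $t \in [0,t^*]$.

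The heart of the matter is to show that if $t^* < T$ then the hypotheses of (G2) again hold at $t = t^*$. This forces $\phi'(t^*) > 0$, hence $\phi' > 0$ on a neighbourhood of $t^*$ by continuity, hence $\phi' > 0$ on $(0, t^* + \eps)$ for small $\eps > 0$ with $t^* + \eps < T$, contradicting the definition of $t^*$; so $t^* = T$, which finishes the proof. We already know $\phi(t^*) \ge \phi(0) > 1$, so it remains to check that $1 < \phi(t^* - \kappa_i) \le \phi(t^*)$ for each $i \in \{1,\ldots,N\}$. Since $0 < \kappa_i \le r$ and $t^* \ge 0$, the delayed argument $t^* - \kappa_i$ lies in $[-r, t^*)$, and we split into two cases. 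If $t^* - \kappa_i \le 0$, then $\phi(t^* - \kappa_i) = \phi_0(t^* - \kappa_i)$ and the datum gives $1 < \phi_0(t^* - \kappa_i) \le \phi_0(0) = \phi(0) \le \phi(t^*)$. If $t^* - \kappa_i \in (0, t^*)$, then the strict monotonicity of $\phi$ on $[0,t^*]$ gives $1 < \phi(0) < \phi(t^* - \kappa_i) < \phi(t^*)$. In either case $1 < \phi(t^* - \kappa_i) \le \phi(t^*)$; so, writing $s = \Phi(t^*)$, we have $1 < \min_i s_i$ and $s_0 = \phi(t^*) = \max_i s_i$, and (G2) yields $g(\Phi(t^*)) < 0$, i.e.\ $\phi'(t^*) = -g(\Phi(t^*))/c > 0$, as needed.

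I expect the only genuine subtlety to be the apparent circularity between monotonicity and the sign of $g$: one needs $\phi$ to be already nondecreasing up to time $t$ (so that every delayed value $\phi(t - \kappa_i)$ is dominated by $\phi(t)$) in order to invoke (G2) and deduce $\phi'(t) > 0$. This is exactly what the \emph{first failure time} $t^*$ is designed to handle. The only other point requiring care is the bookkeeping of whether each delayed argument $t^* - \kappa_i$ falls in the initial-data interval $[-r,0]$ (where the needed bounds come from the hypothesis on $\phi_0$) or in $(0,t^*)$ (where they come from the monotonicity already established on $[0,t^*]$).
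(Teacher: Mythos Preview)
Your argument is correct and follows the same strategy as the paper's proof: establish $\phi'(0)>0$ via (G2), then obtain a contradiction at the first time the derivative fails to be positive by showing that the hypotheses of (G2) are again satisfied there. The paper phrases this as ``imagine there is some first positive time $\tau$ with $\phi'(\tau)=0$,'' while you use the supremum $t^*$; these are equivalent formulations of the same first-failure-time idea, and your version is simply more explicit (in particular in separating the cases $t^*-\kappa_i\le 0$ and $t^*-\kappa_i>0$, which the paper leaves implicit).
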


\begin{proof}
Since 
\[
c\phi'(0) = -g(\phi(0),\phi(-\kappa_1),\ldots,\phi(-\kappa_N))
\]
and 
\[
1 < \min_i \phi(-\kappa_i) \leq \max_i \phi(-\kappa_i) \leq \phi(0)
\]
by hypothesis, (G2) implies that $\phi'(0) > 0$.  Imagine that there is some first positive time $\tau > 0$ such that $\phi'(\tau) = 0$.  Then, since $\phi$ is strictly increasing on $[0,\tau]$, we have 
\[
1 < \min_i \phi(\tau - \kappa_i) \leq \max_i \phi(\tau-\kappa_i) \leq \phi(\tau),
\]
whence $\phi'(\tau) > 0$ --- a contradiction.  
\end{proof}

As we shall see, assuming that (G1) is satisfied, (G2) is a sufficient condition for uniqueness of fronts because it prevents the initially increasing branch of the unstable manifold at $1$ from approaching zero.  

(G3) ensures that any solution of \eqref{eq:wpe} that satisfies the boundary condition $\eqref{eq:negbc}$ and does not approach $\pm \infty$ remains bounded by some uniform constant $M$, as we now explain.  
\begin{lemma} \label{lem:G3}
Suppose that (G3) holds.  Then there is a positive constant $M$ such that any solution of \eqref{eq:wpe} that satisfies $\eqref{eq:negbc}$ and does not approach $\pm \infty$ as $x \to \infty$ remains bounded by $M$.  $M$ does not depend on $c > 0$.  
\end{lemma}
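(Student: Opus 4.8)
The plan is to prove the contrapositive with the \emph{explicit} constant $M := M_0 + 1$, which visibly does not depend on $c$: if $\phi$ solves \eqref{eq:wpe}, satisfies $\phi(-\infty)=1$, and has $\sup_{x\in\R}|\phi(x)| > M$, then $\phi(x)\to +\infty$ or $\phi(x)\to -\infty$ as $x\to\infty$. (Throughout we take $\phi$ to be defined on all of $\R$, as the hypothesis that $\phi$ not approach $\pm\infty$ as $x\to\infty$ presupposes.) The whole argument rests on one elementary consequence of (G3): whenever $\phi(x)\ge M_0$ and $|\phi(x-\kappa_i)|\le\phi(x)$ for every $i$, the profile equation $c\phi'(x) = -g(\Phi(x))$ forces $c\phi'(x)\ge (1-\eta)\phi(x) > 0$, with the mirror statement holding when $\phi(x)\le -M_0$ dominates the magnitudes of its delayed values. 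In words: a large value of $|\phi|$ that exceeds all the relevant delayed values is self-reinforcing.

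I would encode this with the running supremum $w(x) := \sup_{y\le x}|\phi(y)|$, which is continuous and nondecreasing with $w(-\infty)=1$ and $\sup_x w(x) = \sup_{\R}|\phi| > M$. For each level $L\in(M,\sup_x w(x))$ — note $L > M > \max(M_0,1)$, so $L>1$ and $L>M_0$ — set $\alpha(L) := \sup\{x : w(x)=L\}$; continuity and monotonicity of $w$ give that $\alpha(L)$ is finite, that $|\phi(y)|\le L$ for all $y\le\alpha(L)$, that $|\phi(\alpha(L))| = L$, and that $w(x)>L$ for $x>\alpha(L)$. The crucial claim is that if $\phi(\alpha(L))=L$ then $\phi(x)>L$ for all $x>\alpha(L)$. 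Since $|\phi(\alpha(L)-\kappa_i)|\le w(\alpha(L)) = L = \phi(\alpha(L))\ge M_0$, self-reinforcement gives $\phi'(\alpha(L))>0$, so $\phi$ rises above $L$ just to the right of $\alpha(L)$; if it ever returned to $L$, letting $b$ be the first such return, $\phi$ would attain an interior maximum on $[\alpha(L),b]$ at some $c^*$ with $\phi'(c^*)=0$, yet at $c^*$ each delayed value $\phi(c^*-\kappa_i)$ is either $\le L$ (when $c^*-\kappa_i\le\alpha(L)$) or lies in $(L,\phi(c^*)]$ (otherwise), so $\phi(c^*)=\max_i|\phi(c^*-\kappa_i)|\ge L>M_0$ and self-reinforcement forces $\phi'(c^*)>0$ — a contradiction. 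The mirror-image claim is that $\phi(\alpha(L))=-L$ implies $\phi(x)<-L$ for all $x>\alpha(L)$.

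Since $|\phi(\alpha(L))|=L$, one of these two alternatives holds for each such $L$, and the same one must hold for all of them (otherwise $\phi$ would be both eventually positive and eventually negative). Assume, without loss of generality, the first alternative: $\phi(x)>L$ for $x>\alpha(L)$ and every $L\in(M,\sup_x w(x))$; the second is symmetric and ultimately yields $\phi(x)\to-\infty$. If $\sup_x w(x)=M^*\in(M,\infty)$, then the first alternative gives $\liminf_{x\to\infty}\phi(x)\ge L$ for every $L<M^*$, so (since $|\phi|\le M^*$) $\phi(x)\to M^*$; then $\phi$ is bounded while $\phi'(x)=-g(\Phi(x))/c\to -g(M^*,\dots,M^*)/c$ by continuity of $g$ and $\Phi(x)\to(M^*,\dots,M^*)$, which forces $\phi'(x)\to 0$ and hence $g(M^*,\dots,M^*)=0$ — contradicting (G3) applied to $s=(M^*,\dots,M^*)$ (all coordinates equal $M^*\ge M_0$), which gives $|g(M^*,\dots,M^*)+M^*|\le\eta M^*<M^*$. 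So $\sup_x w(x)=\infty$, and then $\liminf_{x\to\infty}\phi(x)\ge L$ for every $L>M$ gives $\phi(x)\to+\infty$. This proves the contrapositive.

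I expect the main obstacle to be the crucial claim of the second paragraph — converting ``large values are self-reinforcing'' into the statement that once $\phi$ climbs above its running-supremum level $L$ it can never return — which requires the interior-maximum argument together with the bookkeeping that, at that interior maximum $c^*$, none of the delayed values $\phi(c^*-\kappa_i)$ can exceed $\phi(c^*)$ in magnitude. A lesser subtlety is the borderline case $\sup_x|\phi|=M^*<\infty$ with the supremum not attained in the interior, which the argument dispatches via the $\phi'(x)\to 0$ device and the strict inequality $\eta<1$.
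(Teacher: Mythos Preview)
Your proof is correct and follows essentially the same route as the paper's: both establish the contrapositive by showing that (G3) forces $|\phi|$ to keep rising once it exceeds $M_0$ at a point dominating its delayed values, so that $\phi\to\pm\infty$. Your argument is considerably more careful --- the running-supremum device $w$, the interior-maximum step, and the explicit disposal of the case $\sup|\phi|<\infty$ make rigorous what the paper compresses into ``a similar argument shows \ldots'' --- but the underlying mechanism is identical.
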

\begin{proof} 
Choose $M_0 > 1$ such that $\max_i |s_i| = |s_0| \geq M_0$ implies that $|g(s) + s_0| \leq \eta |s_0|$, for some $\eta \in (0,1)$.  Now take $M \geq M_0$.

Suppose that $\sup_{x \in \R} |\phi(x)| > M$, and set 
\[
x_0 = \inf \{ \ x \in \R \ : \ |\phi(x)| \geq M \ \}
\]
(and so $|\phi(x_0)| = M$).   Then $|\phi(x_0 - \kappa_i)| \leq M = |\phi(x_0)|$ for all $i$ and so $|g(\Phi(x_0)) + \phi(x_0)| \le \eta |\phi(x_0)|  < |\phi(x_0)|$.  Thus $\phi(x_0)$ and $g(\Phi(x_0))$ necessarily have different signs with $|g(\Phi(x_0))| \geq (1-\eta)M \geq (1-\eta)M_0$.  

Thus $|\phi|$ is strictly increasing at $x = x_0$, with $|\phi(x_0)|' \geq \frac{1}{c}(1-\eta)M_0$.  A similar argument shows that $|\phi(x_0)|$ is strictly increasing, with derivative bounded away from zero, whenever $x_0$ is the first time that $|\phi(x_0)| = \bar{M}$, for any $\bar{M} \geq M$.  We conclude that, if $|\phi(x)|$ is not bounded by $M$, then $\phi(\infty) = \pm \infty$.  
\end{proof}

To complete the preparation necessary to state our main theorems, we define some quantities related to the linearizations of \eqref{eq:wpe} about its equilibria.  Let $c > 0$ and $\alpha \in \R^{N+1}$ be given, with $\alpha_i \geq 0$ for $i = 1,\ldots, N$.  Recall that $\kappa = (\kappa_1,\ldots,\kappa_N)$ with $\kappa_i > 0$ for $i = 1, \ldots, N$ is the vector of delays in equation $\eqref{eq:wpe}$.  We define the following function of the complex number $\lambda$:
\begin{equation}
D(\lambda;c,\alpha) := c\lambda + \alpha_0 + \sum_{i=1}^N \alpha_i e^{-\kappa_i \lambda}.
\label{eq:Ddef}
\end{equation}
With ${\bf a} = (a,\ldots,a)$ equal to ${\bf 0}$ or ${\bf 1}$, $D(\lambda;c,\nabla g({\bf a}))=0$ is the characteristic equation for the linearization of $\eqref{eq:wpe}$ at the equilibrium $a$.  

With $\alpha$ as above, we define the number
\begin{equation}
c(\alpha) := \inf_{\lambda > 0} \frac{\alpha_0 + \sum_{i=1}^N \alpha_i e^{\kappa_i \lambda}}{\lambda}.
\label{eq:cdef}
\end{equation}
$c(\alpha)$ is finite if $\sum_{i=0}^N \alpha_i > 0$; in particular, (G1) implies that $c(\nabla g({\bf 0}))$ and $c(\beta)$ are finite.  As we shall see below, for $c > c(\nabla g({\bf 0}))$ the dominant root of the characteristic equation of $\eqref{eq:wpe}$ at $0$ is real and negative.  ($c(\alpha)$ is the so-called ``spreading speed" for the linear equation $u'_p = \alpha_0 u_p + \sum_{i=1}^N \alpha_i u_{\chi_i(p)}$ --- see \cite{weinberger:1982}.)  We will show that decreasing fronts necessarily exist for $c \geq c(\beta)$, where $\beta$ is as in (G1.3). 

We also define the number
\begin{equation}
b(\alpha) := \inf \{ \bar{c} >  0 \ : \ \mbox{all roots of $D(\lambda;c,\alpha)$ have negative real part if $c > \bar{c}$} \ \}.
\label{eq:cdef}
\end{equation}
As $c$ drops below $b(\nabla g({\bf 0}))$, the equilibrium solution $0$ of \eqref{eq:wpe} becomes unstable.  

We now state our main results.  We prove our results and present examples in subsequent sections.  We remind the reader by a ``front" we mean a traveling wave solution of \eqref{MODEQ} connecting $1$ to $0$ or a solution of (\ref{eq:wpe},\ref{eq:negbc},\ref{eq:posbc}).  

\begin{theorem}[Existence and uniqueness of initially decreasing fronts] \label{thm:main}
Let $\mathcal{U}$ be the subset of $c \in [b(\nabla g({\bf 0})),\infty)$ for which an initially decreasing front exists, and let $\mathcal{M} \subset \; \mathcal{U}$ be the subset of $c \in [b(\nabla g({\bf 0})),\infty)$ for which a strictly decreasing front exists.   

Assume that (G1) holds.  Then
\begin{enumerate}
\item $0 < b(\nabla g({\bf 0})) < c(\nabla g({\bf 0})) \leq c(\beta)$.
\item For every $c \in \mathcal{U}$ there is exactly one (up to translation) initially decreasing front.  If (G2) holds, this front is the unique front (not just the unique initially decreasing front). 
\item  $\mathcal{M}$ is closed and contains a maximal ray $[c_m,\infty)$, where $c_m \leq c(\beta)$.  Moreover, $\mathcal{M}$ does not intersect $[b(\nabla g({\bf 0})),c(\nabla(g({\bf 0})))$.
\item If $g$ is $C^1$ everywhere, $\mathcal{U}$ is open relative to $[b(\nabla g({\bf 0})),\infty)$, and contains a maximal ray with infimum $c_f$, where $c_f < c_m$.  
\end{enumerate}
\end{theorem}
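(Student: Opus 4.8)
The four items will be established as follows, organized around the explicit characteristic function $D$ and the one dimensional unstable manifold $W^u({\bf 1})$ of \eqref{eq:wpe}. Throughout write $\alpha:=\nabla g({\bf 0})$, $\gamma:=\nabla g({\bf 1})$, and $h(\mu):=\alpha_0+\sum_{i=1}^N\alpha_i e^{\kappa_i\mu}$, so that $c(\alpha)=\inf_{\mu>0}h(\mu)/\mu$.

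\textbf{Item (1): the linearizations.} The plan is a self-contained study of $D$. On the real axis $\lambda\mapsto D(\lambda;c,\delta)$ is strictly convex and $\to+\infty$ at $\pm\infty$ whenever $\delta_0<0$ and some $\delta_i>0$, so its real roots are governed by the sign of its minimum. By (G1.2), $D(0;c,\gamma)=\sum_i\gamma_i<0$, hence a unique real root $\lambda_u>0$; since $\mathrm{Re}\,D(i\omega;c,\gamma)\le\sum_i\gamma_i<0$ there is no root on $i\R$, and a short estimate of the imaginary part of the root equation (using $|\sin x|\le|x|$) shows $\lambda_u$ is the only root in the closed right half plane for every $c>0$; so ${\bf 1}$ is hyperbolic with a one dimensional unstable manifold and positive, increasing unstable eigenfunction $e^{\lambda_u s}$. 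For $\alpha$ one has $D(0;c,\alpha)=\sum_i\alpha_i>0$; for $c>0$ small a positive real root appears, so ${\bf 0}$ is unstable and $b(\alpha)>0$. The minimizer $\mu^\ast>0$ of $h(\mu)/\mu$ satisfies $c(\alpha)=h'(\mu^\ast)=\sum_i\kappa_i\alpha_i e^{\kappa_i\mu^\ast}>\sum_i\kappa_i\alpha_i$, and for $c\ge c(\alpha)$ there is no nonnegative real root ($D$ is increasing on $[0,\infty)$ because $c>\sum_i\kappa_i\alpha_i$) and no nonreal root in the closed right half plane (again from the imaginary part equation), so all roots of $D(\cdot;c,\alpha)$ lie strictly in the left half plane for $c\ge c(\alpha)$; by continuity of the rightmost root this persists just below $c(\alpha)$, giving $b(\alpha)<c(\alpha)$. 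Finally, linearizing the inequality in (G1.3) along rays in $\{0\le t_0\le t_i\}$ gives $\partial_{s_i}g({\bf 0})\le\beta_i$ for $i\ge1$ and $\sum_i\partial_{s_i}g({\bf 0})\le\sum_i\beta_i$, whence (using $e^{\kappa_i\mu}\ge1$) $h(\mu)\le\beta_0+\sum_i\beta_i e^{\kappa_i\mu}$ and so $c(\alpha)\le c(\beta)$. This proves (1) and records two facts for later use: the dominant root of $D(\cdot;c,\alpha)$ is real and negative for $c>c(\alpha)$, whereas for $c\in(b(\alpha),c(\alpha))$ the characteristic equation at ${\bf 0}$ has no real roots at all and its dominant roots form a complex conjugate pair in the open left half plane.

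\textbf{Item (2) and a lower ray in $\mathcal M$.} Since ${\bf 1}$ is hyperbolic, $W^u({\bf 1})$ is a one dimensional invariant curve, and on it solutions behave near $x=-\infty$ like ${\bf 1}+\delta e^{\lambda_u x}(1+o(1))$; so $W^u({\bf 1})$ is ${\bf 1}$ together with two trajectories, the $\delta<0$ one strictly initially decreasing and the $\delta>0$ one strictly initially increasing, each unique up to translation in $x$. Any front $\phi$ has $\phi_x\to{\bf 1}$ as $x\to-\infty$, hence lies on $W^u({\bf 1})$, hence on one of these trajectories; if it lay on the increasing one, then choosing $x_1\ll0$ so that $\phi$ is increasing and exceeds $1$ on $[x_1-r,x_1]$, Lemma \ref{lem:G21} (which uses (G2)) would force $\phi$ strictly increasing for $x>x_1$, contradicting $\phi(\infty)=0$. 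Therefore every initially decreasing front coincides, up to translation, with the $\delta<0$ trajectory, so there is exactly one for each $c\in\mathcal U$; and when (G2) holds the increasing trajectory is not a front, so the initially decreasing front is then the unique front — this is (2). For $c\ge c(\beta)$ the $\delta<0$ trajectory is itself a front: by Lemma \ref{lem:decrease} it is strictly decreasing until it crosses $0$, by Proposition \ref{prop:exist} it does not overshoot $0$ when $c\ge c(\beta)$, so it stays in $(0,1)$, and its monotone limit at $+\infty$ is an equilibrium there, namely $0$. Hence $[c(\beta),\infty)\subseteq\mathcal M\subseteq\mathcal U$.

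\textbf{Item (3).} To show $\mathcal M$ is closed, take $c_n\in\mathcal M$, $c_n\to c_\ast$, with strictly decreasing fronts $\phi_n$ normalized by $\phi_n(0)=\half$; then $0<\phi_n<1$ and, since $c\phi_n'=-g(\Phi_n)$ with $g$ Lipschitz on $[0,1]^{N+1}$ and $\inf_n c_n>0$, the $\phi_n$ are equicontinuous, so along a subsequence $\phi_n\to\phi_\ast$ uniformly on compact sets, $\phi_\ast$ a nonincreasing solution at speed $c_\ast$ with $\phi_\ast(0)=\half$ and monotone limits $1,0$ at $\mp\infty$; a maximum-principle argument in the spirit of Lemma \ref{lem:decrease} (excluding a flat piece at level $0$) shows $\phi_\ast$ is a strictly decreasing front, so $c_\ast\in\mathcal M$. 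Next, $\mathcal M$ cannot meet $[b(\alpha),c(\alpha))$: for such $c$ the characteristic equation at ${\bf 0}$ has all roots in the open left half plane but no real roots (item (1)), so a strictly decreasing (hence everywhere positive) front converging to ${\bf 0}$ would have to decay along a nonreal mode and so be sign-changing near $+\infty$ — a contradiction. Combining these: $c_m:=\inf\{c:[c,\infty)\subseteq\mathcal M\}$ satisfies $c_m\le c(\beta)$, closedness gives $[c_m,\infty)\subseteq\mathcal M$, and $\mathcal M\subseteq[c(\alpha),\infty)$ forces $c_m\ge c(\alpha)>b(\alpha)$. This is (3).

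\textbf{Item (4) and the main obstacle.} Assume now $g\in C^1(\R^{N+1})$. By Lemma \ref{lem:G3} every front is bounded by a uniform $M$, hence ranges in a fixed compact set on which $g$ and $Dg$ are controlled; global $C^1$-ness is needed here precisely because a non-monotone front may leave $[0,1]$. Fixing $c_0\in\mathcal U$ with front $\phi_0$, the plan is an implicit function theorem argument for $F(\psi,c)(x):=c\psi'(x)+g(\Psi(x))$ on a space of $C^1$ functions with $\psi-\chi$ in an exponentially weighted space (weights dictated by $\lambda_u>0$ at $-\infty$ and the decay rate of $\phi_0$ at $+\infty$), where $\chi$ is a fixed smooth interpolant of $1$ and $0$: then $F(\phi_0,c_0)=0$, $F$ is $C^1$, and $D_\psi F(\phi_0,c_0)$ — the linearization of \eqref{eq:wpe} about $\phi_0$ — is, by the exponential-dichotomy/Fredholm theory for linear delay equations, Fredholm of index $1$ (one unstable dimension at ${\bf 1}$, none at the stable equilibrium ${\bf 0}$ since $c_0>b(\alpha)$). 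Its kernel contains $\phi_0'$; showing the kernel is exactly $\mathrm{span}\{\phi_0'\}$ — equivalently that the operator is onto — then lets one impose a phase condition and apply the implicit function theorem, producing a front $\phi_c$ for all $c$ near $c_0$. Hence $\mathcal U$ is open relative to $[b(\alpha),\infty)$, and since $[c_m,\infty)\subseteq\mathcal M\subseteq\mathcal U$ with $c_m>b(\alpha)$, openness yields $(c_m-\eps,\infty)\subseteq\mathcal U$ for some $\eps>0$; so $c_f:=\inf\{c:(c,\infty)\subseteq\mathcal U\}<c_m$, and $(c_f,\infty)$ is the maximal ray in $\mathcal U$ (which omits its endpoint since $\mathcal U$ is open). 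That proves (4), and, incidentally, since $(c_f,\infty)\not\subseteq\mathcal M$ any $c\in(c_f,c_m)\setminus\mathcal M$ gives a non-monotone front. I expect the main obstacle to be exactly the surjectivity step in item (4): choosing the weighted spaces so that $F$ is differentiable and $D_\psi F$ is Fredholm, and then proving the linearization at the heteroclinic has trivial cokernel — a transversality fact that the index count makes plausible but that requires care with the delay-equation dichotomies, the more so when $\phi_0$ is non-monotone and strays outside $[0,1]$. The oscillation input to item (3) (precise asymptotics of solutions near the stable equilibrium ${\bf 0}$) is a secondary technical point.
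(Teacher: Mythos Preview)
Your treatment of items (1)--(3) is essentially the paper's: the characteristic-function analysis, the one-dimensional unstable manifold at ${\bf 1}$, the compactness argument for closedness of $\mathcal M$, and the ``no real roots $\Rightarrow$ no positive decreasing solution near ${\bf 0}$'' argument for the lower bound on $\mathcal M$ all match the paper's Propositions \ref{lem:cdef}, \ref{prop:lim}, and \ref{prop:lowerlimit}.

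For item (4) you and the paper diverge. You propose exponentially weighted spaces, Fredholm index $1$ for the linearization with kernel $\mathrm{span}\{\phi_0'\}$, then a phase condition and IFT; the boundary conditions would come for free from the weights. The paper instead works in the \emph{unweighted} pair $W^{1,\infty}\to L^\infty$, restricts to the codimension-one affine subspace $W^{1,\infty}_\nu=\{z:z(0)=\nu\}$ (so the phase condition is a pointwise pin, not a weight), invokes Mallet-Paret's Fredholm theory to get index $0$ on $W^{1,\infty}_0$, and then shows directly that the kernel there is trivial. That kernel argument is not the abstract transversality you anticipate: it is a hands-on monotonicity argument showing that any bounded solution $w$ of the linearized equation has $w(-\infty)=0$, lies on a one-dimensional unstable set, is initially of one sign, and then (using the sign structure of $\nabla g$ near ${\bf 1}$, which is why $\nu$ is chosen close to $1$) is increasing until $x=0$, hence $w(0)\neq 0$. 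Because the paper's spaces are unweighted, the IFT only gives a nearby bounded solution; a separate center-manifold argument (Proposition \ref{prop:bdycond}) is then needed to recover $\phi_c(\pm\infty)\in\{1,0\}$. Your route would avoid that last step but at the cost of setting up the weighted Fredholm theory; the paper's route trades easier functional analysis for an extra dynamical lemma.

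One correction: you invoke Lemma \ref{lem:G3} (hypothesis (G3)) in item (4) to get a uniform bound on fronts, but (G3) is \emph{not} assumed in Theorem \ref{thm:main}; only (G1) and global $C^1$ are. This is harmless, since for the IFT at a \emph{fixed} $c_0$ the single front $\phi_0$ is automatically bounded (it is continuous with limits at $\pm\infty$), and that is all you need to control $g$ and $Dg$ on its range --- but you should not appeal to (G3) here.
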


\begin{remark}
In the case $\beta = \nabla g({\bf 0})$, part 3 of Theorem $\ref{thm:main}$ implies that $c_m = c(\beta)$.  Part 4 implies that, if $g$ is $C^1$, there are wave speeds for which non-monotone fronts exist.  We suspect that this smoothness assumption can be relaxed.   
\end{remark}

Theorems asserting the existence of a critical wave speed $c_*$ such that some monostable evolution equation has monotone traveling wave solutions of all speeds greater than $c_*$ have a history going back to the 1930s \cite{fisher:1937, KPP:1937}.  We do not attempt to survey this literature here.  We do mention, however, the distinction between the cases $c_m = c(\nabla g({\bf 0}))$ and $c_m > c(\nabla g({\bf 0}))$, which is discussed at length in  \cite{vansaarloos:2003}.  In the former case, the front $\phi_{c_m}$ travels at the spreading speed of $\eqref{MODEQ}$ linearized about zero.  For this reason it is called a ``pulled front'' because it appears to be pulled by its leading edge.  In the latter case, $\phi_{c_m}$ is called a ``pushed front'' because the speed is determined not by the leading edge of the front but by the behavior of feedback function $g$ in the interfacial region.  In our framework, we necessarily get pulled fronts when we can take $\beta = \nabla g({\bf 0})$, but when we cannot take $\beta = \nabla g({\bf 0})$, it might be that only pushed fronts are possible.  

Our second main theorem describes the behavior of fronts as $c$ approaches and reaches a point of $\del U$ ($c_f$ in particular), provided that this point is greater than $b(\nabla g({\bf 0}))$, that $g$ is $C^1$, and that (G3) holds.   

\begin{theorem}\label{thm:main2}
Suppose that $g$ is $C^1$ and that (G3) holds.  If $c \in \del \mathcal{U}$ and $c > b(\nabla g({\bf 0}))$, \eqref{eq:wpe} has two distinct bounded nonconstant solutions $\psi^0$ and $\psi^1$, neither of which is connects $1$ to $0$, such that $\psi^1(-\infty) = 1$ and $\psi^0(\infty) = 0$.  
\end{theorem}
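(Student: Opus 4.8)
Throughout write $c$ for the given boundary point. Since $\mathcal{U}$ is open relative to $[b(\nabla g({\bf 0})),\infty)$ (Theorem~\ref{thm:main}(4)) and $c>b(\nabla g({\bf 0}))$, we have $c\notin\mathcal{U}$, so there is a sequence $c_n\to c$ with $c_n\in\mathcal{U}$ and, for $n$ large, $c_n$ bounded below away from $0$; let $\phi_n$ be the unique (up to translation) initially decreasing front at speed $c_n$. Each $\phi_n$ satisfies \eqref{eq:negbc}--\eqref{eq:posbc}, hence does not approach $\pm\infty$, so $|\phi_n|\le M$ by Lemma~\ref{lem:G3}; and \eqref{eq:wpe}, the continuity of $g$, and the lower bound on $c_n$ give a uniform Lipschitz bound on the $\phi_n$. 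Hence any sequence of translates of the $\phi_n$ has a locally uniformly convergent subsequence, and passing to the limit in the integrated form of \eqref{eq:wpe} (using $c_n\to c$) shows every such limit is a solution of \eqref{eq:wpe} at speed $c$ bounded by $M$. Two choices of translates will give $\psi^1$ and $\psi^0$. Note also that $c>b(\nabla g({\bf 0}))$ means all roots of $D(\lambda;c,\nabla g({\bf 0}))$ have negative real part, so ${\bf 0}$ is locally exponentially asymptotically stable for \eqref{eq:wpe} at $c$; by continuity of the rightmost characteristic roots in the speed, the same holds uniformly for the $c_n$, $n$ large, on a fixed neighborhood $\mathcal{N}$ of ${\bf 0}$ in $C$ contained in the basin of attraction of ${\bf 0}$.

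For $\psi^1$: by Lemma~\ref{lem:decrease} each $\phi_n$ is strictly decreasing at least until it first reaches $0$, hence takes the value $\half$ exactly once on that stretch; let $a_n$ be that point and put $\psi^1_n(x)=\phi_n(x+a_n)$. Then $\psi^1_n(0)=\half$, $\psi^1_n$ is strictly decreasing on $(-\infty,0]$, and $\psi^1_n(x)\in(\half,1)$ for $x<0$. A locally uniform limit $\psi^1$ solves \eqref{eq:wpe} at $c$, is bounded by $M$, nonconstant ($\psi^1(0)=\half$), and non-increasing on $(-\infty,0]$ with values in $[\half,1]$. Thus $\ell:=\psi^1(-\infty)$ exists in $[\half,1]$; since $c\,(\psi^1)'(x)\to-g(\ell,\ldots,\ell)$ as $x\to-\infty$ while $(\psi^1)'$ is integrable near $-\infty$, we get $g(\ell,\ldots,\ell)=0$, whence $\ell=1$ by (G1.1). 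So $\psi^1(-\infty)=1$, $\psi^1$ lies on the one-dimensional unstable manifold $W^u(1)$ of \eqref{eq:wpe} at $c$, and (being non-increasing and nonconstant) is a translate of the initially strictly decreasing branch of $W^u(1)$. If $\psi^1$ connected $1$ to $0$ it would be an initially decreasing front at $c$, forcing $c\in\mathcal{U}$ --- impossible. Hence $\psi^1$ is a bounded, nonconstant solution with $\psi^1(-\infty)=1$ that does not connect $1$ to $0$.

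For $\psi^0$: since $\phi_n$ satisfies \eqref{eq:posbc}, $(\phi_n)_t\to{\bf 0}$ in $C$ as $t\to\infty$, while $(\phi_n)_t$ is near ${\bf 1}$ --- hence outside $\mathcal{N}$ --- for $t$ very negative. Let $t_n:=\inf\{\tau:(\phi_n)_t\in\mathcal{N}\ \mbox{for all}\ t\ge\tau\}$, so $(\phi_n)_{t_n}\in\del\mathcal{N}$ and $(\phi_n)_t\in\mathcal{N}$ for $t>t_n$; put $\psi^0_n(x)=\phi_n(x+t_n)$. A locally uniform limit $\psi^0$ solves \eqref{eq:wpe} at $c$, is bounded by $M$, has $(\psi^0)_0\in\del\mathcal{N}$ (hence is nonconstant), and satisfies $(\psi^0)_t\in\overline{\mathcal{N}}$ for all $t\ge0$; as $\overline{\mathcal{N}}$ lies in the basin of attraction of ${\bf 0}$, $\psi^0(\infty)=0$. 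Finally, $\psi^0$ does not connect $1$ to $0$: if $\psi^0(-\infty)=1$ then $\psi^0$ lies on $W^u(1)$ at $c$ and, being nonconstant, is a translate of one of its two branches; the initially decreasing branch is $\psi^1$, which does not tend to $0$, contradicting $\psi^0(\infty)=0$; so $\psi^0$ would be an initially increasing front at $c$, and this must be ruled out.

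The two solutions are distinct: since $\psi^1(-\infty)=1$ but $\psi^1$ is not a front, $\psi^1(\infty)\ne0$ (if it exists at all), and likewise for every translate of $\psi^1$; since $\psi^0(\infty)=0$, $\psi^0$ cannot be a translate of $\psi^1$. The main work lies in (i) the compactness bookkeeping in the delay phase space $C$ --- equicontinuity, limits of solutions being solutions, and the elementary facts about $\alpha$- and $\omega$-limit sets used above --- and, above all, (ii) showing that the initially increasing branch of $W^u(1)$ is not a front at $c$, which is the step I expect to be the genuine obstacle. In the presence of (G2) this is exactly the mechanism behind the remark following Lemma~\ref{lem:G21} (an initially increasing solution overshoots ${\bf 1}$ and cannot return to $0$); since Theorem~\ref{thm:main2} does not assume (G2), one must instead argue from (G1.3) and (G3) directly, or arrange the translates defining $\psi^0$ so that $\psi^0(-\infty)=1$ is excluded from the start, and for the distinguished boundary point $c=c_f$ one may also use that $[c_f,\infty)$ is the maximal ray of $\mathcal{U}$.
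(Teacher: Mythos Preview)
Your approach is essentially the paper's: pick $c_k\in\mathcal{U}$ with $c_k\to c$, use (G3) (Lemma~\ref{lem:G3}) for a uniform bound, extract locally uniform limits of two different normalizations of the $\phi_k$ --- pinning at $1/2$ on the decreasing stretch for $\psi^1$, and pinning at the last exit from a fixed basin of attraction of $0$ (the paper's Proposition~\ref{prop:unif_basin}) for $\psi^0$. Your verification that $\psi^1(-\infty)=1$ via (G1.1) and that $\psi^0(\infty)=0$ via the uniform-in-$c$ basin of attraction are exactly what the paper does, and you spell out distinctness, which the paper leaves implicit.

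The ``genuine obstacle'' you flag --- ruling out that $\psi^0$ lies on the initially increasing branch $\phi_+$ of $W^u(1)$ and hence is itself a front --- is not resolved in the paper's proof either. The paper's entire justification that neither limit connects $1$ to $0$ is the single sentence ``Since $\mathcal{U}$ is open and $c_* \in \del \mathcal{U}$ by hypothesis, $\phi_*$ does not connect $1$ to $0$,'' applied uniformly to whatever limit $\phi_*$ arises. As you observe, $c_*\notin\mathcal{U}$ only excludes \emph{initially decreasing} fronts, so this argument directly handles $\psi^1$ (which you construct to be initially decreasing) but not the possibility that $\psi^0$ is a translate of $\phi_+$. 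The paper does not distinguish between the two branches at this step and supplies no further argument. So you have not missed anything the paper provides; you have simply been more scrupulous about a point the paper passes over. In short, your write-up already matches the paper's proof in content and is, if anything, more careful about its limitations.
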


Intuitively, the above theorem describes the splitting, as $c$ reaches $\del \mathcal{U}$ from the interior of $\mathcal{U}$, of a front --- a heteroclinic between equilibria --- into a pair of heteroclinics (the solutions $\psi^0$ and $\psi^1$) linking $0$ and $1$, respectively, to some other invariant set.  Under some additional hypotheses, we can make this idea more precise.   The following, for example, is an immediate consequence of Mallet-Paret and Sell's Poincar\'{e}-Bendixson Theorem for delay equations (see Theorem 2.1 in \cite{mallet-paret:1996}).

\begin{proposition}\label{prop:pb}
Suppose that $N = 1$, that $g: \R^2 \to \R$ is $C^1$ and everywhere strictly decreasing in its second coordinate, and that the equilibria of \eqref{eq:wpe} are all isolated.  Then given any bounded solution $\phi$ of $\eqref{eq:wpe}$ (in particular $\psi^1$) either the $\omega$-limit set of $\phi$ is a single periodic solution, or the $\omega$- and $\alpha$- limit sets of any solution in the $\omega$-limit set of $\phi$ consist of equilibria of $\eqref{eq:wpe}$. 
\end{proposition}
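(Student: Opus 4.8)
The plan is to observe that, when $N=1$, the wave profile equation $\eqref{eq:wpe}$ is a scalar delay differential equation with a single delay whose dependence on the delayed argument is strictly monotone, so that it is an instance of the ``monotone cyclic feedback systems with delay'' (the one-component case) studied by Mallet-Paret and Sell, and then to quote their Poincar\'{e}--Bendixson theorem. Concretely: with $N=1$ and $c>0$ fixed, set $F(a,b) := -\frac{1}{c}\, g(a,b)$, so that a function $\phi$ solves $\eqref{eq:wpe}$ exactly when $\phi'(x) = F\big(\phi(x),\phi(x-\kappa_1)\big)$. Rescaling the independent variable by $\kappa_1$ --- a harmless time change that affects neither boundedness of solutions nor the set of equilibria --- normalizes the delay to $1$. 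Since $g$ is $C^1$, so is $F$; and since $g$ is strictly monotone in its second coordinate, the partial derivative $\partial F/\partial b = -\frac{1}{c}\,\partial g/\partial s_1$ has a fixed strict sign on all of $\R^2$. Hence $\eqref{eq:wpe}$ is precisely a scalar delay equation with $C^1$, strictly signed feedback in the delayed variable, and it carries the associated integer-valued discrete Lyapunov functional counting sign changes of solution segments.

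With the setup in place, I would invoke Theorem 2.1 of \cite{mallet-paret:1996}. The solution $\phi$ is bounded by hypothesis --- and, in the application of interest, the solution $\psi^1$ produced by Theorem $\ref{thm:main2}$ is bounded as well, so the conclusion applies to it in particular --- so its orbit in the phase space $C$ is precompact and its $\omega$-limit set $\Omega$ is nonempty, compact, connected, and invariant, with the semiflow restricting to a genuine flow on $\Omega$. The cited theorem then furnishes exactly the dichotomy asserted: either $\Omega$ is the orbit of a single nonconstant periodic solution, or, for every solution $q$ whose orbit lies in $\Omega$, both $\alpha(q)$ and $\omega(q)$ consist of equilibria of $\eqref{eq:wpe}$. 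Combined with the standing assumption that the equilibria are isolated, this is the statement of the Proposition.

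I do not expect any genuinely hard step --- this is why the result is ``immediate.'' The only real work is bookkeeping: checking that $\eqref{eq:wpe}$ with $N=1$ meets the precise standing hypotheses of \cite{mallet-paret:1996} (one delay; $C^1$ nonlinearity; strict monotonicity in the delayed argument, which is what makes the discrete Lyapunov functional well defined and forces its monotone-decrease property along solutions), verifying that the delay-normalizing rescaling leaves boundedness and the equilibrium set untouched, and recording that $\psi^1$ is one of the bounded solutions to which the theorem applies. The monotonicity of $g$ in its second coordinate enters only to place the equation in the admissible class; whether it is strictly increasing or strictly decreasing is immaterial, since both the positive- and the negative-feedback cases are covered by the cited theorem.
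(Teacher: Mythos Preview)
Your proposal is correct and matches the paper's approach exactly: the paper does not give a separate proof but simply states that the proposition is an immediate consequence of Theorem~2.1 in \cite{mallet-paret:1996}. Your write-up fleshes out the bookkeeping needed to see that $\eqref{eq:wpe}$ with $N=1$ falls under the hypotheses of that theorem, which is all that is required.
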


An open problem is what conditions on $g$ guarantee that $c_f = b(\nabla g({\bf 0}))$ --- that is, whether (roughly speaking) fronts persist as far as is suggested by the linearization of \eqref{eq:wpe} about $0$.  When $D = 1$, $N = 1$, and $g$ satisfies appropriate hypotheses (such as monotonicity, and conditions to rule out equilibria other than $1$ and $0$) Theorem $\ref{thm:main2}$ together with Proposition $\ref{prop:pb}$ reduce the proof that $c_f = b(\nabla g({\bf 0}))$ to showing that for $c > b(\nabla g({\bf 0}))$, $\eqref{eq:wpe}$ admits neither periodic solutions that oscillate about zero nor solutions that are homoclinic to $1$: for then the solution $\psi^1$ in Theorem $\ref{thm:main2}$ cannot exist for $c > b(\nabla g({\bf 0}))$.  Proving nonexistence of periodic solutions oscillating about $0$ for $\eqref{eq:wpe}$ when $c > b(\nabla g({\bf 0}))$ is obverse to the problem of proving the existence of periodic solutions oscillating about an unstable equilibrium.  This latter problem has been extensively studied for delay equations in the so-called negative feedback case \cite{nussbaum:1974,mallet-paret:1988,krisztin:2008,diekmann:1995}.

\subsection{Related work and examples} \label{subsec:ex}

In this section we describe some closely related previous work and present some applications of our results. The part of Theorem \ref{thm:main} pertaining to the existence of monotone fronts overlaps with some earlier results in the case that $g$ satisfies appropriate monotonicity conditions --- for example, that
\begin{equation}\label{eq:quasimon}
g(s_0,\ldots,s_N) \ \mbox{is increasing in $s_1,\ldots,s_N$ for} \ s \in [0,1]^{N+1}.
\end{equation}
The part of Theorem \ref{thm:main} pertaining to non-monotone waves echoes some earlier results for very special cases of (non-smooth) $g$.  The novelty of our work lies in the the relaxation of monotonicity conditions on $g$, and the extension of existence results for non-monotone waves to a (relatively) broad class of functions $g$.  

In \cite{weinberger:1982}, Weinberger considers difference equations of the form
\[
u(t+1,\cdot) = Q(u(t,\cdot)).
\]
Here the map $Q$ acts on functions $u$ which themselves map a spatial domain to the positive reals.  The spatial domain is $D$-dimensional and either continuous or discrete.  If condition \eqref{eq:quasimon} holds, time-$\tau$ maps for \eqref{MODEQ} fit into the framework of \cite{weinberger:1982}; in this case, the existence of decreasing fronts for \eqref{MODEQ} when $c \geq c(\beta)$ is a consequence of results in \cite{weinberger:1982}.   The main focus in \cite{weinberger:1982} is on the so-called asymptotic spreading speed of initial data supported on a compact set; the uniqueness of monotone fronts, and the existence and behavior of non-monotone fronts, are not directly addressed.  

The existence results in \cite{weinberger:1982} use a monotone iteration technique.  The same is true of \cite{hsu:2000} (described further below), where a monotonicity condition on $g$ is also imposed.  Recently such techniques have been extended, in the setting of lattice integro-difference equations, to the case where the nonlinearity is not necessarily monotone but satisfies conditions which are similar to our (G1).  In particular \cite{li:2009} and \cite{hsu:2008} both obtain the existence of (not necessarily monotone) traveling waves as well as a variational characterization of the minimum wave speed guaranteeing monotone fronts.  Although our setting and techniques differ, our results here can be regarded as complementing these latter works.

We now turn to some more specific applications of \eqref{MODEQ}.

{\flushleft {\bf Cellular Neural Networks}}

In \cite{hsu:2000} Hsu and Lin consider lattice equations of the form 
\begin{equation}\label{eq:cnn} 
u_n' = -u_n + z+ \sum_{\ell = 0}^d a_\ell f(u_{n-\ell})
\end{equation}
(two-dimensional equations, and equations where the coupling is not unidirectional, are also considered in \cite{hsu:2000}).  The chief motivation for \cite{hsu:2000} lies in the case that $f$ is piecewise linear; in this case \eqref{eq:cnn} becomes a so-called \emph{cellular neural network} (CNN).  Cellular neural networks were first introduced in \cite{chua:1988a} to model the behavior of a large array of coupled electronic components.  

At the cost of modifying $f$ we can assume without loss of generality that $z = 0$.  For simplicity we restrict attention to the case $d = 1$, $z = 0$, and $a_0,a_1 > 0$ so that \eqref{eq:cnn} becomes
\begin{equation}\label{eq:cnn1}
u_n' = -u_n + a_0 f(u_n) + a_1 f(u_{n-1}).
\end{equation}
In this particular case, the conditions (G1) rewrite as follows:
\begin{itemize}
\item ({\bf G1.1}) $f(0) = 0$, $f(1) = \frac{1}{a_0 + a_1}$, and $f(s) \ne \frac{s}{a_0 + a_1}$ for any $s \in (0,1)$;
\item ({\bf G1.2}) $0 \leq a_0 f'(s) < 1$ for $s \in \{0,1\}$, and $f'(1) < \frac{1}{a_0 + a_1} < f'(0)$;
\item ({\bf G1.3}) There is some  $(\beta_0,\beta_1)$ with $\beta_0 < 0$ and $\beta_0 + \beta_1 > 0$ such that $0 < -s_0 + a_0f(s_0) + a_1f(s_1) < \beta_0 s_0 + \beta_1 s_1$ for $0 < s_0 < s_1 < 1$.  
\end{itemize}

The existence problem in \cite{hsu:2000} is formulated in terms of increasing traveling waves of negative speed; a change of variables is required to convert to our setting.  In terms of \eqref{eq:cnn1}, the main hypotheses in \cite{hsu:2000} are
\begin{itemize}
\item $1 < \frac{f'(0)}{a_0 + a_1}$; 
\item $a_1 f(s) \ \mbox{is strictly increasing for} \ s \in (0,1)$.
\item $-s_0 + a_0(f_0) + a_1f(s_0) < \beta_0 s_0 + \beta_1 s_1$ for $\beta_0 = a_0f'(0) - 1$ and $\beta_1 = a_1 f'(0)$.
\end{itemize}
The first and third conditions above are analogous to our (G1.2) and (G1.3).  The second condition above is stronger; it should be thought of as analogous to \eqref{eq:quasimon} and allows for the application of monotone iteration techniques.  Under these conditions, there is some $c_* \ge 0$ such that a decreasing front exists for all $c \geq c_*$ (Theorem 1.1 in \cite{hsu:2000}, reformulated for our setting).  In \cite{hsu:2000} and the companion paper \cite{hsu:1999} the authors also consider \eqref{eq:cnn1} with a piecewise linear $f$ for which the monotonicity condition above fails, but which is simple enough to admit detailed analysis.  In results analogous to ours, the authors describe condititions under which, as $c$ drops below a critical level, monotone fronts give way to non-monotone fronts that ``overshoot" and oscillate about their limit at $\infty$.  

{\flushleft {\bf Clock model for a dilute gas}}

Models of the type \eqref{MODEQ} also arise in situations where the lattice sites represent agents or categories, rather than spatial locations.  One example is considered in \cite{vanzon:1998}, where the authors derive the equation 
\begin{equation}\label{eq:vzex}
w_p' = -w_p + w_{p-1}^2, \ p \in \Z,
\end{equation} 
where each $w_p(t)$ is the proportion of particles in a dilute gas with a particular collision history.   We refer to \cite{vanzon:1998} for the details of the derivation; the model \eqref{eq:vzex} is also discussed in \cite{ebert:2002,peletier:2004,vansaarloos:2003}.   Under the change of variables $u = 1-w$, \eqref{eq:vzex} becomes the equation  we introduced just before Hypothesis \ref{hyp:hyp}: 
\begin{equation}\label{eq:vzexcov}
u'_p = g(u_p,u_{p-1}) = -u_p + 2u_{p-1} - u_{p-1}^2.
\end{equation}
This is a special case of equation \eqref{eq:cnn1} above. 

Similar in spirit  to \cite{hsu:1999} (and a chief inspiration for the present work) is \cite{peletier:2004}, in which Peletier and Rodriguez consider a lattice equation with a piecewise linear nonlinearity that is intended to imitate equation \eqref{eq:vzex}.  In \cite{peletier:2004} the authors describe the behaviors of traveling waves of different speeds: in particular, as the wave speed drops below a certain critical value the fronts change from being monotonically decreasing to being oscillatory.  We will discuss \cite{peletier:2004} further in Section \ref{EXAMPLES}.  

Note that if we write $\eqref{eq:vzexcov}$ in the general form 
\be u'_p = -u_p + f(u_{p-1}) \label{eq:vzgen} \ee the conditions (G1) become
\begin{itemize}
\item (G1.1) $f(0) = 0$, $f(1) = 1$, and $f(s) \ne s$ for $s \in (0,1)$;
\item (G1.2) $f'(0) > 1 > f'(1) \geq 0$;
\item (G1.3) There is some $\beta_1 > 1$ such that $s \leq f(s) < \beta_1 s$ for $s \in (0,1)$. 
\end{itemize}
In this case we can take $\beta_0 = -1$ and $\beta_1 > \left(\sup_{t \in (0,1)} \frac{f(t)}{t} \right)$.  If $f$ is concave down on $(0,1)$ we can take $\beta_1 = f'(0)$.  We revisit \eqref{eq:vzgen} in Section \ref{EXAMPLES}. 

{\flushleft {\bf Semi-discrete advection-reaction equations}}

Genuinely spatially discrete models with unidirectional coupling do arise; given the unidirectionality assumption we impose on \eqref{MODEQ}, though, perhaps the most natural source of applications of \eqref{MODEQ} is the spatial discretization of certain first-order PDEs.  Consider, for example, the PDE 
\be u_t = -f(u)_x + h(u). \label{eq:PDE} \ee  
The so-called \emph{first-order upwind semi-discretization} with step-size $\eps$ for this equation is
\[ 
u'_n = -\frac{1}{\eps}(f(u_n)-f(u_{n-1})) + h(u_n) = g_\eps(u_n,u_{n-1}). 
\]
Since in this case we regard the lattice points as being separated by a distance of $\eps$, the traveling wave ansatz in this case is $u_n(t) = \phi(\eps n - ct)$ and the wave profile equation becomes
\be
\renewcommand{\theequation}{\arabic{equation}$_\epsilon$}
c \phi'(x) = \frac{1}{\eps}[f(\phi(x)) - f(\phi(x-\eps))] - h(\phi(x)). \label{eq:wpecl}
\ee \renewcommand{\theequation}{\arabic{equation}}
In this situation, we can write the conditions (G1) as follows.
\begin{itemize}
\item (G1.1) $h(0) = h(1) = 0$, and $h(s) \ne 0$ whenever $s \in (0,1)$;
\item (G1.2) $h'(1) < 0 < h'(0)$, $f'(1) \ge 0$, and $\frac{1}{\epsilon} f'(0) > h'(0)$;
\item (G1.3) $\beta_0 < 1$, $\beta_0 + \beta_1 > 0$, and $0 < \eps h(s_0) + f(s_1) - f(s_0) < \eps \beta_0 s_0 + \eps \beta_1 s_1$ whenever $0 < s_0 < s_1 < 1$.  
\end{itemize}
Observe that, for these conditions to hold, the source term $h$ must be {\it monostable} --- $h(s) > 0$ for $s \in (0,1)$ with $h'(0) > 0 > h'(1)$; thus semi-discrete conservation laws ($h \equiv 0$), which are studied in \cite{benzoni-gavage:1998,bianchini:2003a,serre:2007}, are excluded from our analysis.  Observe also that if $f$ is decreasing on any subinterval of $[0,1]$, then the first inequality in (G1.3) can be violated by choosing $\eps$ sufficiently small.  However, so long as the flux $f$ is non-decreasing on $[0,1]$, and the source term $h$ is monostable, the conditions can be satisfied for $\eps > 0$ as small as we wish.

Let us assume that (G1.1) and (G1.2) hold and that $f$ is smooth and nondecreasing on $[0,1]$.  Put $c_* = \sup_{s \in (0,1)} f'(s) > 0$ and $h_* = \sup_{s \in (0,1)} h(s)/s$.  Then, given any $\delta > 0$, for $\epsilon > 0$ sufficiently small the choice
\[
\beta_1 = \frac{c_* + \delta}{\epsilon} \ \mbox{and} \ \beta_0 = h_* - \beta_1
\]   
makes (G1.3) hold.  For such $\beta = (\beta_0,\beta_1)$, 
\[
c(\beta) = \inf_{\lambda > 0} \frac{\epsilon h_* + (e^{\epsilon \lambda} - 1)(c_* + \delta)}{\epsilon \lambda}.
\]
As $\epsilon \to 0$, $c(\beta) \to c_* + \delta$.  Thus, for any $c > c_*$, $(\ref{eq:wpecl}_\epsilon)$ admits a unique decreasing front $\phi_\epsilon$ for $\epsilon$ sufficiently small.  For such a $\phi_\epsilon$, we have $|\phi_\epsilon'| \leq |h(\phi_\epsilon)|/(c - c_*)$ everywhere.  Thus the $\phi_\epsilon$ form a bounded equicontinuous family as $\epsilon \to 0$, and so converge uniformly on compact sets to a solution of the continuum limit of $(\ref{eq:wpecl}_\epsilon)$, 
\be 
\phi' = \frac{-h(\phi)}{c-f'(\phi)}. 
\label{eq:wpecl_cont} 
\ee  
$\eqref{eq:wpecl_cont}$ is a first order scalar ODE, and so can be analyzed easily: since $h$ is positive on $(0,1)$, classical solutions exist that connect $1$ to $0$ if and only if $c > c_*$.  In other words, the monotone fronts obtained in Theorem $\ref{thm:main}$ for $(\ref{eq:wpecl}_\epsilon)$ with $c > c(\beta(\eps))$ are lattice realizations of the continuous monotone fronts of $\eqref{eq:wpecl_cont}$.

Note that in the case that $c_m < c_*$, the solutions $\phi_\eps$ to $(\ref{eq:wpecl}_\epsilon)$ with $c \in [c_m,c_*)$ converge pointwise almost everywhere, via Helly's Theorem, to a monotone function $\phi_*$ with countably many jump discontinuities.  Furthermore $\phi_*$ satisfies $\eqref{eq:wpecl_cont}$ on its intervals of continuity and satisfies the boundary conditions $\phi_*(-\infty) = 1$ and $\phi_*(\infty) = 0$.  In other words, $\phi_*$ is an entropy solution of $\eqref{eq:wpecl_cont}$.  In \cite{Mascia:2000} it is shown that entropy solutions of $\eqref{eq:wpecl_cont}$ exist if and only if $\sup_{s \in (0,1)} \frac{f(s)}{s} \le c \le c_*$, suggesting that $\lim_{\eps \to 0} c_m(\eps) = \sup_{s \in (0,1)} \frac{f(s)}{s}$.  This analysis highlights the fact that the estimate $c(\nabla g({\bf 0})) \le c_m \le c(\beta)$ does not determine $c_m$ exactly.  The determination of $c_m$ in any particular case with $c(\nabla g({\bf 0})) < c(\beta)$ requires a careful analysis of the particular problem under study.

In addition to the continuum limit $\eps \to 0$, one can look at the conservation law limit $h \to 0$ for fixed $\eps$.  In this limit it is not hard to show that the corresponding wave profiles $\phi^h$ converge, uniformly on compact sets along a subsequence, to constant functions.  Thus the fronts that we study can be thought of as bifurcating from equilibria of the conservation law $u'_n = \frac{1}{\eps}(f(u_n)-f(u_{n-1}))$.

 {\flushleft {\bf Two coupling terms}}

Two more equations similar to $\eqref{eq:cnn}$ are the two-dimensional equation 
\[ 
u'_{i,j} = -u_{i,j} + \alpha f(u_{i,j-1}) + \beta f(u_{i-1,j}) 
\]
and the one-dimensional equation
\[
u'_i = -u_i + \alpha f(u_{i-1}) + \beta f(u_{i-2}),
\]
where $\alpha$ and $\beta$ are positive parameters.  In each of these cases $g$ is a function from $\R^3$ to $\R$ --- though, in the two-dimensional equation, if the wave direction is $\xi = (1,1)$ then the two delayed arguments to $g$ in $\eqref{eq:wpe}$ collapse to a single argument.   We close this section with a remark about the wave direction $\xi$ in the two-dimensional equation.  With $g$ and the coupling function $\chi$ viewed as given, the set of wave directions $\xi$ that satisfy $\eqref{eq:unidir}$ is closed and connected in $S^2$.Ê Theorem $\ref{thm:main}$ applies to each of these wave directions, with the quantity $c(\beta)$ now regarded as a function of $\xi$ --- or, equivalently, as a function of $\kappa$.Ê Suppose that $\beta$ satisfies our hypotheses and let $F(x;\beta,\kappa)=\frac{\beta_0+\sum_{i=1}^N \beta_i e^{\kappa_i x}}{x}$.  As we shall see below, $F$ has a unique positive critical point and a corresponding minimum value $c(\beta) = c(\beta;\kappa) > 0$.  Since $F$ is smoothly increasing in each $\kappa_i$ (for positive $x$), so is $c(\beta;\kappa)$.    As mentioned above, in the $\beta = \nabla g({\bf 0})$ case $c(\beta)$ is the asymptotic spreading speed for the linearization of \eqref{MODEQ} about zero.  In this light the monotonicity of $c(\beta)$ in $\kappa$ can be interpreted to mean that $\eqref{MODEQ}$ transports mass more quickly in directions for which $\mathbf{\chi}_i(p)$ lies further behind the hyperplane orthogonal to $\xi$.Ê The fact that $c$ is smooth in $\kappa$ can be contrasted with \cite{cahn:1999, hoffman:2009} where a parameter that controls whether or not fronts are present is discontinuous with respect to the direction in which the front travels.

\subsection*{Acknowledgements}
We thank L. A. Peletier for sharing his work \cite{peletier:2004}, for suggesting that extending it would make an interesting project, and for useful discussions at an early stage of this project.  We thank A. Scheel for helpful discussions and in particular for bringing the work of Van Saarloos \cite{vansaarloos:2003} to our attention.  We thank C.E. Wayne for making helpful remarks on an early version of this paper.  Finally, we thank the anonymous referees for useful suggestions.
\noindent
This work was funded in part by the National Science Foundation under grant DMS-0603589.

\section{Linearization of \eqref{eq:wpe} and the unstable manifold at $1$}

In this section we present some basic facts about the characteristic equation $D(\lambda;c,\alpha) = 0$, where $D$ is as in \eqref{eq:Ddef}, and present some results regarding the unstable manifold for \eqref{eq:wpe} at $1$.  Much of this material is well-known. 

\begin{proposition}[Description of the roots of the characteristic equation]
\label{lem:cdef}
Let $\alpha_0 < 0 \le \alpha_i, \ i =1, \ldots, N$ and $\kappa_i > 0, \ i = 1,\ldots, N$ be given.  Take $c > 0$.  
We regard 
\begin{equation}\label{eq:Ddef2}
D(\lambda;c,\alpha) := c\lambda + \alpha_0 + \sum_{i=1}^N \alpha_i e^{-\kappa_i \lambda}
\end{equation}
as a function of $\lambda \in \mathbb{C}$.  We define
\begin{equation*}
c(\alpha) := \inf_{\lambda > 0} \frac{\alpha_0 + \sum_{i=1}^N \alpha_i e^{\kappa_i \lambda}}{\lambda}
\end{equation*}
(recall \eqref{eq:cdef}).  

The following hold.

\begin{enumerate}
\item The real part of any complex root of $D$ is less than any real root of $D$. 
\item If $ \sum_{i=0}^N \alpha_i \leq 0$, then $D$ has one nonnegative real root and one nonpositive real root, and all complex roots have negative real parts.   If $\sum_{i=0}^N \alpha_i < 0$, the two real roots are in fact positive and negative.   
\item If $\sum_{0=1}^N \alpha_i > 0$, then $c(\alpha)$ is finite and $0 < b(\alpha) < \sum_{i=1}^N \alpha_i \kappa_i < c(\alpha)$.  
\item If $\sum_{0=1}^N \alpha_i > 0$, then $D$ has real roots if and only if $c \geq c(\alpha)$.  
\end{enumerate}
\end{proposition}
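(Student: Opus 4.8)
The plan is to treat the four assertions roughly in order, using elementary properties of the real-variable function
\[
R(\lambda) := D(\lambda;c,\alpha) = c\lambda + \alpha_0 + \sum_{i=1}^N \alpha_i e^{-\kappa_i \lambda}, \qquad \lambda \in \R,
\]
together with a comparison estimate that controls the real parts of complex roots. First I would prove item 1: if $\mu = a + bi$ is a complex root with $b \ne 0$, then taking real parts of $D(\mu;c,\alpha) = 0$ gives
\[
ca + \alpha_0 + \sum_{i=1}^N \alpha_i e^{-\kappa_i a}\cos(\kappa_i b) = 0,
\]
and since $\alpha_i \ge 0$ and $\cos(\kappa_i b) < 1$ for at least one $i$ when the corresponding $\alpha_i > 0$ (the degenerate case where all relevant cosines equal $1$ forces $b$ into a discrete set and can be handled separately, or excluded because then $\mu$ is effectively real), we get $ca + \alpha_0 + \sum \alpha_i e^{-\kappa_i a} > 0$, i.e. $R(a) > 0$. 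But if $\rho$ is a real root, $R(\rho) = 0$, and since $R$ is strictly convex (as $R'' = \sum \alpha_i \kappa_i^2 e^{-\kappa_i\lambda} \ge 0$, and strictly so unless all $\alpha_i = 0$ for $i\ge 1$, a trivial case) and $R(\lambda) \to +\infty$ as $\lambda \to \pm\infty$ when... — actually here one must be a little careful: $R(\lambda)\to +\infty$ as $\lambda\to-\infty$ always (dominated by the exponential terms, assuming some $\alpha_i>0$), and $R(\lambda)\to+\infty$ as $\lambda\to+\infty$ because of the $c\lambda$ term. So $R$ is strictly convex with limits $+\infty$ at both ends, hence has either zero, one, or two real roots, and on the region to the right of its (unique) minimizer $R$ is increasing. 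Since $R(a) > 0$, the real part $a$ lies strictly to the left of the largest real root; this gives item 1.

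Next, items 2–4 follow from analysing the convex function $R$ and the auxiliary function
\[
F(\lambda) := \frac{\alpha_0 + \sum_{i=1}^N \alpha_i e^{\kappa_i \lambda}}{\lambda}, \qquad \lambda > 0,
\]
noting $c(\alpha) = \inf_{\lambda>0}F(\lambda)$, and that $R(-\lambda) = -c\lambda + \alpha_0 + \sum\alpha_i e^{\kappa_i\lambda} = -\lambda(c - F(\lambda))$ for $\lambda > 0$: thus $R$ has a negative real root at $-\lambda$ iff $F(\lambda) = c$. For item 2, if $\sum_{i=0}^N\alpha_i \le 0$ then $R(0) = \sum_{i=0}^N\alpha_i \le 0$, and since $R\to+\infty$ at both ends, $R$ has (by convexity) exactly one root $\le 0$ and one root $\ge 0$, and when $R(0)<0$ these are strictly negative and positive; the claim about complex roots is then immediate from item 1 since the largest real root is $\ge 0$ while... — wait, that only gives $\mathrm{Re}(\mu) < $ (largest real root), which could be nonnegative. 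To get that complex roots have negative real part one argues directly: from the real-part equation above, $ca = -\alpha_0 - \sum\alpha_i e^{-\kappa_i a}\cos(\kappa_i b)$; I would show $a \ge 0$ is impossible by using $|\sum\alpha_i e^{-\kappa_i a}\cos(\kappa_i b)| < \sum\alpha_i e^{-\kappa_i a} \le \sum\alpha_i$ (for $a\ge 0$, since $\kappa_i>0$) together with $-\alpha_0 \ge \sum_{i\ge1}\alpha_i$, forcing $ca < 0$, hence $a < 0$. For item 3, $\sum_{i=0}^N\alpha_i > 0$ means $R(0) > 0$; I would show the minimizer of $R$ lies at some $\lambda^* < 0$ (since $R'(0) = c - \sum_{i\ge1}\alpha_i\kappa_i$ — if this is positive $R$ is already increasing at $0$ so $\lambda^*<0$; if $c < \sum\alpha_i\kappa_i$ then $\lambda^*>0$ and $R(\lambda^*) < R(0)$... this needs the threshold analysis). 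The strict chain $0 < b(\alpha) < \sum_{i=1}^N\alpha_i\kappa_i < c(\alpha)$ is the crux: $b(\alpha)$ is where the rightmost complex root crosses the imaginary axis, $\sum\alpha_i\kappa_i$ is $c - R'(0)$ evaluated at the critical $c$ where $R'(0)=0$ (equivalently the critical $c$ below which $R(0)>0$ fails to be the... ), and $c(\alpha) = \min F$. One shows $\sum\alpha_i\kappa_i < c(\alpha)$ by noting $F(\lambda) > \sum\alpha_i\kappa_i$ for all $\lambda > 0$: indeed $\alpha_0 + \sum\alpha_i e^{\kappa_i\lambda} > \sum\alpha_i\kappa_i\lambda$ since $e^{\kappa_i\lambda} > 1 + \kappa_i\lambda$ and $\alpha_0 + \sum\alpha_i > 0$. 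And $b(\alpha) < \sum\alpha_i\kappa_i$ follows because at $c = \sum\alpha_i\kappa_i$ the function $R$ has $R'(0) = 0$, so $0$ is the minimizer, $R(0) = \sum_{i=0}^N\alpha_i > 0$, hence $R$ has no real roots — and one checks all complex roots have negative real part at this $c$ (combining item 1, which gives $\mathrm{Re}\,\mu$ less than any real root were there one, with a direct estimate, or a continuity/winding-number argument), so $c = \sum\alpha_i\kappa_i$ is already in the stable regime, giving $b(\alpha) \le \sum\alpha_i\kappa_i$; strictness comes from openness of the stable set in $c$. Positivity $b(\alpha) > 0$ holds because $R(0) > 0$ for all $c > 0$ while $R\to+\infty$ as $\lambda\to-\infty$ would give a negative real root unless the minimum of $R$ over $\lambda<0$ stays positive — for small $c$ this fails, producing a positive real part root (in fact a real one), so $b(\alpha)>0$. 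Finally item 4: $D$ has a real root iff $R$ attains a nonpositive value iff (since $R(0)>0$ and $R$ convex with min possibly on the negative axis contributing the relation to $c(\alpha)$, or on $\lambda<0$) $\min_{\lambda>0}F(\lambda) \le c$, i.e. $c \ge c(\alpha)$; here one uses the identity $R(-\lambda) = -\lambda(c - F(\lambda))$ so a negative real root exists iff $c = F(\lambda)$ for some $\lambda > 0$, and combining with the fact that when $c \ge c(\alpha)$ such a $\lambda$ exists by the intermediate value theorem (as $F(\lambda)\to\infty$ as $\lambda\to 0^+$ and as $\lambda\to\infty$, and $\min F = c(\alpha) \le c$), while for $c < c(\alpha)$ no such $\lambda$ exists and one checks $R > 0$ on $\lambda \ge 0$ too (since $R(0)>0$ and... when $c<c(\alpha) \le \sum\alpha_i\kappa_i$ is false — actually $c(\alpha) > \sum\alpha_i\kappa_i$, so $c<c(\alpha)$ allows $c$ near $\sum\alpha_i\kappa_i$ where $R'(0)$ could be negative, but then the minimizer is at some $\lambda^*>0$ with $R(\lambda^*) = -\lambda^*(c - \tilde F(\lambda^*))$ for the reflected function — in any case $R(\lambda^*) = 0$ would force $c = F(\lambda^*) \ge c(\alpha)$, contradiction, so $R > 0$ throughout).

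The main obstacle I expect is the strict inequality $b(\alpha) < \sum_{i=1}^N \alpha_i\kappa_i < c(\alpha)$ in item 3, and specifically verifying that at $c = \sum_{i=1}^N\alpha_i\kappa_i$ \emph{all} complex roots — not just the largest real root — have negative real part. The clean way is: (i) show no real roots exist at this $c$ (done via $R(0)>0$ and $0$ being the minimizer), (ii) invoke item 1 to place $\mathrm{Re}\,\mu$ strictly below where a real root would be, but since there is none this needs supplementing — so instead run the direct estimate on the real-part equation for complex roots, $ca = -\alpha_0 - \sum\alpha_i e^{-\kappa_ia}\cos(\kappa_ib)$, showing $a\ge 0$ leads to $ca \le -\alpha_0 + \sum\alpha_i < $ something forcing a contradiction when $c = \sum\alpha_i\kappa_i$; this requires juggling $-\alpha_0$, $\sum\alpha_i$, and $\sum\alpha_i\kappa_i$ and may need the convexity bound $e^{-\kappa_ia} \ge 1 - \kappa_i a$. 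Alternatively, a homotopy/argument-principle count of roots in the right half-plane as $c$ varies handles both $b(\alpha)>0$ and $b(\alpha) < \sum\alpha_i\kappa_i$ uniformly, which might be the more robust route. The remaining pieces (convexity of $R$, the $e^x > 1+x$ comparisons, intermediate-value arguments for $F$) are routine.
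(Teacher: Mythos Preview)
Your approach has the right architecture, but there is a genuine gap in part 1 that propagates through parts 2 and 3. From the real part of $D(a+ib;c,\alpha)=0$ you correctly obtain $R(a)>0$, but by convexity of $R$ this only tells you that $a\notin[\rho_1,\rho_2]$ (where $\rho_1\le\rho_2$ are the real roots, if any); it does \emph{not} rule out $a>\rho_2$. So your conclusion ``$a$ lies strictly to the left of the largest real root'' does not follow, and item~1 as stated (less than \emph{any} real root) is not established. The missing idea is to use the \emph{imaginary} part of the equation $D(a+ib)=0$ as well: it gives
\[
c = -\sum_{i=1}^N \alpha_i e^{-\kappa_i a}\,\frac{\sin(\kappa_i b)}{b},
\]
and substituting into $R'(a)=c-\sum_i \alpha_i\kappa_i e^{-\kappa_i a}$ yields
\[
R'(a) = -\sum_{i=1}^N \alpha_i\kappa_i e^{-\kappa_i a}\left(\frac{\sin(\kappa_i b)}{\kappa_i b}+1\right) < 0,
\]
since $\sin(x)/x > -1$ for $x\ne 0$. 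This places $a$ strictly to the left of the minimizer of $R$, hence $a<\rho_1$. This same inequality $R'(a)<0$ is exactly what resolves the obstacle you flagged in part~3: when $c\ge\sum_i\alpha_i\kappa_i$ one has $R'(0)\ge 0$, and since $R'$ is increasing, $R'(a)<0$ forces $a<0$ directly --- no real roots are needed for the comparison, and no homotopy argument is required.

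Your direct estimate in part~2 also fails: from $ca=-\alpha_0-\sum_i\alpha_i e^{-\kappa_i a}\cos(\kappa_i b)$ with $a\ge 0$ and $-\alpha_0\ge\sum_{i\ge 1}\alpha_i$, the bounds give $ca\ge -\alpha_0-\sum_{i\ge 1}\alpha_i e^{-\kappa_i a}\ge 0$, not $ca<0$, so no contradiction arises. Again, the fix is the $R'(a)<0$ device above combined with item~1. Your identity $R(-\lambda)=-\lambda(c-F(\lambda))$ for part~4 is a clean shortcut (arguably neater than the paper's implicit-function computation of the minimum $\tilde D$ as a function of $c$), but as written you have not ruled out \emph{positive} real roots when $c<c(\alpha)$; this does require a short additional argument.
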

\begin{remark}
When $\alpha_0 = -1$, $\alpha_1 > 1$, and $N = 1$, a complete description of the roots of $D$ is given in Theorem 6.1 in \cite{mallet-paret:1988}.
\end{remark}
\begin{proof}[Proof of Lemma $\ref{lem:cdef}$]
We begin with the case that $\alpha_i = 0$ for all $i \in \{1,\ldots,N\}$; in this case we need to prove the first two points of the proposition (since $\sum_{0=1}^N \alpha_i > 0$ implies that $\alpha_i > 0$ for some positive $i$).  In this case, $D(\lambda) = c\lambda + \alpha_0$ has one root, this root is positive, and the proposition holds.  We henceforth assume that $\alpha_i > 0$ for at least one $i \in \{1,\ldots,N\}$.

Consider the restriction 
\[
D(x;c,\alpha) = cx + \alpha_0 +  \sum_{i=1}^N \alpha_i e^{-\kappa_i x}
\]
of $D$ to the real axis.  Let $D'(x;c,\alpha) = c - \sum_{i=1}^N \alpha_i \kappa_i e^{-\kappa_i x}$ denote the derivative of $D$ with respect to $x$.
Note that $D''(x;c,\alpha) = \sum_{i=1}^N \alpha_i \kappa_i^2 e^{-\kappa_i x} > 0$, that $D(-\infty) = D(\infty) = \infty$, and that $D'(-\infty) = -\infty$ and $D'(\infty) = c > 0$.  Thus $D$ decreases to a single global minimum $\tilde{D} = \tilde{D}(c,\alpha)$ at some $\tilde{x} = \tilde{x}(c,\alpha)$, and thereafter increases.  

We now prove part (1) of the proposition.  Assume that $\lambda_0 = x_0 + iy_0$ is a complex root of $D$ (with $y_0 \neq 0$).  Writing the real and imaginary parts of the equation $D = 0$ separately yields
\[ 
\ba{l} \alpha_0 + cx_0 + \sum_{i=1}^N \alpha_i e^{-\kappa_i x_0} \cos(\kappa_i y_0) = 0; \\ \\ 
cy_0 + \sum_{i=1}^N \alpha_i e^{-\kappa_i x_0} \sin(\kappa_i y_0) = 0.
\ea 
\]
If the above equations hold, we see that
\[
D(x_0;c,\alpha) = \alpha_0 + cx_0 + \sum_{i=1}^N \alpha_i e^{-\kappa_i x_0} \cos(0) \geq 0,
\]
with strict inequality unless $\cos(\kappa_i y_0) = 1$ for all $i \in \{1,\ldots,N\}$.  The second equation above rewrites as
\[
c = -\sum_{i=1}^N \alpha_i e^{-\kappa_i x_0} \frac{\sin(\kappa_i y_0)}{y_0}.
\]
Thus at $x_0$ we have
\[
\frac{d}{dx}D(x_0;c,\alpha) = c - \sum_{i=1}^N \alpha_i \kappa_i e^{-\kappa_i x_0} = -\sum_{i=1}^N \alpha_i\kappa_i e^{-\kappa_i x_0} \left( \frac{\sin(\kappa_i y_0)}{\kappa_i y_0} + 1 \right) < 0.
\]
Thus, viewing $D$ as a function of the real variable $x$, we see that $D$ is nonnegative and decreasing at $x_0$.  Thus $x_0$ is less than or equal to any real root of $D$.   If $x_0$ is in fact a real root of $D$, then $\cos(\kappa_i y_0) = 1$ for all $i \in \{1,\ldots,N\}$ and the equation $D = 0$ must hold both at $x_0 + iy_0$ and at $x_0$.  The imaginary part of $D = 0$ shows that this is impossible; we conclude that $x_0$ lies strictly to the left of any real roots of $D$.  This completes the proof of part (1) of the proposition.

Since
\[
D(0;c,\alpha) = \sum_{i=0}^N \alpha_i,
\]
if $\sum_{i=0}^N \alpha_i \leq 0$ we see that $D$ has a nonnegative real root and a (not necessarily distinct) nonpositive real root.  If $\sum_{i=0}^N \alpha_i < 0$, $D$ has a positive root and a negative root.  Part (1) now implies that all nonreal roots necessarily have negative real parts.  We have proven part (2) of the proposition.  

We now assume that $\sum_{i=0}^N \alpha_i > 0$.  With this assumption, let us define the function 
\begin{equation}\label{eq:Fdef}
F(x;\alpha) = F(x) := \frac{1}{x}\left[\alpha_0 + \sum_{i=1}^N \alpha_i e^{\kappa_i x} \right]
\end{equation}
for $x > 0$.  Note that $F(0+) = F(\infty) = \infty$ and that $F$ is continuous; thus $F$ is minimized at some positive point and its minimum value is $c(\alpha)$.   

We compute
\[
F(x) = \frac{1}{x}\left[ \sum_{i=1}^N \alpha_i e^{\kappa_i x} + \alpha_0 \right] > \frac{1}{x} \left[ \sum_{i=1}^N \alpha_i (1 + \kappa_i x) +\alpha_0 \right] = \sum_{i=1}^N \alpha_i \kappa_i + \frac{\sum_{i=1}^N \alpha_i + \alpha_0}{x} > \sum_{i=1}^N \alpha_i \kappa_i.
\]
Taking the minimum over $x > 0$ of the left hand side yields $c(\alpha) > \sum_{i=1}^N \alpha_i \kappa_i$.  To prove part (3) of the proposition, it remains to show that $b(\alpha) < \sum_{i=1}^N \alpha_i \kappa_i$; that is, for $c \ge \sum_{i=1}^N \alpha_i \kappa_i$, all roots have negative real part.

Let us assume accordingly that $c \ge \sum_{i=1}^N \alpha_i \kappa_i$.  In this case a simple computation shows that $D(0) > 0$ and $D'(0) \ge 0$, so that any real roots of $D$ are strictly negative.  Thus, if $D$ has real roots, the complex roots must have negative real parts by part (1) of the lemma.  Even if no real roots are present any complex root must have negative real part: for given a root $x_0 + iy_0$ with $y_0 \neq 0$, we know from above that $D'(x_0;c,\alpha) < 0$, whence $x_0$ is less than $0$. 

It remains to prove part (4) of the proposition: still assuming that $\sum_{i=0}^N \alpha_i > 0$, we wish to show that $D$ has real roots precisely when $c \geq c(\alpha)$.  
Let us write $\tilde{D}$ for the minimum of $D$ over the reals and $\tilde{x}$ for the point at which this minimum is attained (our earlier observations imply that $\tilde{x}$ is negative).  We have
\begin{equation}
 c = \sum_{i=1}^N \alpha_i \kappa_i e^{-\kappa_i \tilde{x}},
\label{eq:csgn}
\end{equation}
which we substitute into $\eqref{eq:Ddef2}$ to obtain
\begin{equation}
\tilde{D}(\tilde{x}) = \alpha_0 + \sum_{i=1}^N \alpha_i (1 + \tilde{x} \kappa_i) e^{-\kappa_i \tilde{x}}. 
\label{eq:Dtilde}
\end{equation}
Note that
\[ 
\tilde{D}'(\tilde{x}) = -\tilde{x}\sum_{i=1}^N \alpha_i \kappa_i^2 e^{-\kappa_i \tilde{x}} > 0. 
\]
Since $\tilde{x}$ is defined implicitly in terms of $c$, we may regard $\eqref{eq:Dtilde}$ as defining $\tilde{D}$ as a function of $c$ (holding $\alpha$ fixed). 
Since $\tilde{x}$ is strictly decreasing in $c$ and $\tilde{D}$ is strictly increasing in $\tilde{x}$ for $\tilde{x} < 0$, it follows that $\tilde{D}$ is strictly decreasing in $c$.   As $c$ approaches $\sum_{i=1}^N \alpha_i \kappa_i$ from above, $\tilde{x}$ approaches $0$ from below and $\tilde{D}$ approaches $\sum_{i=0}^N \alpha_i > 0$ from below.  On the other hand, as $c \to \infty$, $\tilde{x}$ approaches $-\infty$ and $\tilde{D}$ approaches $-\infty$.  Therefore there is some unique $c_*$ where $\tilde{D}$ --- viewed as a function of $c$ --- crosses zero downward as $c$ crosses $c_*$ upward.  When $\tilde{D} \le 0$, $D$ has real roots.  

Our claim is that $c_* = c(\alpha)$.  To establish this we compute 
\[
F'(x) = \frac{1}{x^2}\left[ -\alpha_0 + \sum_{i=1}^N \alpha_i (\kappa_i x - 1)e^{\kappa_i x}\right]
\]
and observe that 
\[
F'(-\tilde{x}(c)) = \frac{-\tilde{D}(c)}{\tilde{x}(c)^2}.
\]
Since $c_*$ is the unique positive root of $\tilde{D}$, it follows that $-\tilde{x}(c_*)$ is the unique positive critical point of $F$ --- that is, the global minimum of $F$.  Thus we have 
\[
c(\alpha) = F(-\tilde{x}(c_*)).  
\]
To see in turn that $c_* = F(-\tilde{x}(c_*))$, we compute
\begin{eqnarray*}
0 & = & \tilde{D}(c_*) \ \iff \\
-\alpha_0 & = & \sum_{i=1}^N \alpha_i (\tilde{x}(c_*) \kappa_i +1)e^{-\kappa_i \tilde{x}(c_*)} \ \iff \\
\frac{\sum_{i=1}^N \alpha_i e^{\kappa_i (-\tilde{x}(c_*))} + \alpha_0}{-\tilde{x}(c_*)} & = & \sum_{i=1}^N \alpha_i \kappa_i e^{-\kappa_i \tilde{x}(c_*)} \ \iff \\
F(-\tilde{x}(c_*)) & = & c_*,
\end{eqnarray*}
where the last line follows from \eqref{eq:csgn} and from the definition of $F$.  
\end{proof}

The above proposition establishes all of point 1 of Theorem \ref{thm:main} except that $c(\beta) \geq c(\nabla g({\bf 0}))$.  Our hypotheses on $\beta$ imply that $\beta_i \geq \del g ({\bf 0})/\del s_i$ for all $i \in \{1,\ldots,N\}$, and that $\sum_{i=0}^N \beta_i \geq \sum_{i=0}^N \del g ({\bf 0})/\del s_i$.  It follows that $F(x;\beta) \geq F(x;\nabla g({\bf 0}))$ for all $x > 0$ (see \eqref{eq:Fdef}); the desired inequality follows in turn.

Our hypotheses on $g$ and the above proposition yield that, for every $c > 0$, the characteristic equation for the linearization of \eqref{eq:wpe} at $1$ has exactly one positive root $\lambda$, with all other roots having negative real parts.  Accordingly, equation \eqref{eq:wpe} has a one-dimensional unstable manifold at $1$.  We now review some facts about one-dimensional unstable manifolds at a hyberbolic equilibrium point; we refer the reader to chapters 7 and 10 of \cite{hale:1993}.  Recall that we are writing $C = C[-r,0]$ for the phase space for \eqref{eq:wpe}.  Following \cite{hale:1993} let us write 
\[
W^u(1) = \{ \ y_0 \in C \ : \ y(t) \ \mbox{solves \eqref{eq:wpe} for all $t < 0$ and $y(-\infty) = 1$} \ \}.
\]
Given a neighborhood $V$ of the constant function $1$ in $C$, we write 
\[
W^u(1,V) = \{ \ y_0 \in W^u(1) \ : \ y_t \in V \ \mbox{for all} \ t \leq 0 \ \}. 
\]
The following proposition is just Theorem 1.1 in chapter 10 of \cite{hale:1993} (along with some facts that emerge in the proof), adapted to our situation.  
\begin{proposition}\label{prop:umfact}
Let $c > 0$ and let $\lambda$ denote the unique positive real root of the characteristic equation for $\eqref{eq:wpe}$ at the equilibrium $1$. 
There is a neighborhood $V$ about $1$ in $C$ and a linear projection 
\[
\pi : C \to U := \{ ke^{\lambda \cdot}, \ k \in \R \} \subset C
\]
such that the following holds.  There is some neighborhood $\tilde{V}$ of $0$ in $U$ and a $C^1$ function $h: \tilde{V} \to C$ such that $W^u(1,V) = 1 + h(\tilde{V})$, $h(0) = 0$, and such that $h$ satisfies the estimate 
\[
\|h(\phi) -  h(\psi)\| \geq \|\phi - \psi\|/2.
\]
\end{proposition}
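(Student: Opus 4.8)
The plan is to derive Proposition \ref{prop:umfact} as essentially a verbatim citation of the classical unstable manifold theorem for retarded functional differential equations (Theorem 1.1 in Chapter 10 of \cite{hale:1993}), with the only real work being to translate that theorem into the one-dimensional setting we have here and to extract the lower Lipschitz bound $\|h(\phi)-h(\psi)\| \geq \|\phi-\psi\|/2$ from the structure of the construction. First I would set up the spectral decomposition of the phase space $C = C[-r,0]$ induced by the linearization of \eqref{eq:wpe} at the equilibrium $1$. By the discussion immediately preceding the proposition, the characteristic function $D(\cdot\,;c,\nabla g({\bf 1}))$ has exactly one root with nonnegative real part, namely the simple positive real root $\lambda$, and all other roots have real part bounded above by some $-\gamma < 0$ (indeed strictly less than $\lambda$, and a uniform spectral gap exists because the roots of a retarded characteristic equation accumulate only at $-\infty$). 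Hale--Verduyn Lunel then gives a decomposition $C = P \oplus Q$, where $P = U = \{ke^{\lambda\cdot} : k \in \R\}$ is the one-dimensional generalized eigenspace associated to $\lambda$ and $Q$ is the complementary closed subspace on which the semiflow of the linear equation decays like $e^{-\gamma t}$. The projection $\pi : C \to U$ along $Q$ is bounded and linear; this is the $\pi$ in the statement.

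Next I would invoke the unstable manifold theorem in the standard ``graph over the unstable eigenspace'' form. Writing the nonlinearity of \eqref{eq:wpe} near $1$ as its linear part plus a $C^1$ remainder that vanishes to first order at $1$ (here we use that $g$ is $C^1$ on a neighborhood of ${\bf 1}$), the theorem produces a neighborhood $V$ of $1$ in $C$, a neighborhood $\tilde V$ of $0$ in $U$, and a $C^1$ map $h : \tilde V \to Q \subset C$ with $h(0) = 0$ and $Dh(0) = 0$, such that
\[
W^u(1,V) = \{\, 1 + v + h(v) : v \in \tilde V \,\}.
\]
(One sees this matches the statement by setting, for $\phi \in \tilde V$, the ``$h$'' of the proposition equal to $\phi \mapsto \phi + h(\phi)$, i.e.\ absorbing the identity-on-$U$ part; I would phrase the write-up so that the $h$ in the display $W^u(1,V) = 1 + h(\tilde V)$ is this combined map, which is still $C^1$ and still vanishes at $0$.) The existence of such $V, \tilde V, h$ is exactly the content cited from \cite{hale:1993}, so no independent construction is needed; I would simply state it and point to the reference.

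The one quantitative point that is not literally a black-box citation is the lower bound $\|h(\phi) - h(\psi)\| \geq \|\phi - \psi\|/2$. I would obtain this from the fact that the combined map $\phi \mapsto \phi + h(\phi)$ has derivative at $0$ equal to the inclusion $U \hookrightarrow C$, which is an isometry onto its image (note $\|ke^{\lambda\cdot}\|_\infty = |k|$ since $\lambda > 0$, so the sup over $[-r,0]$ is attained at $0$), composed with $I + Dh(\cdot)$ where $Dh(0) = 0$. Since $h$ is $C^1$, by shrinking $\tilde V$ we may assume $\|Dh(v)\| \leq 1/2$ for all $v \in \tilde V$; then for $\phi,\psi \in \tilde V$ the mean value inequality gives $\|h(\phi) - h(\psi) - (\phi-\psi)\| \leq \tfrac12\|\phi - \psi\|$ (writing $h$ now for the $Q$-valued part), hence by the reverse triangle inequality and the isometry property of $U$, $\|(\phi + h(\phi)) - (\psi + h(\psi))\| \geq \|\phi-\psi\| - \tfrac12\|\phi-\psi\| = \tfrac12\|\phi-\psi\|$. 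Relabeling, this is the asserted estimate for the combined map. The main obstacle, such as it is, is bookkeeping: being careful about which object is called $h$ (the $Q$-valued graph map versus the full parametrization of the manifold) and verifying that the norm on $U$ behaves as an isometry, so that shrinking the neighborhood to control $\|Dh\|$ genuinely delivers the clean constant $1/2$. There is no deep difficulty here — the substantive invariant-manifold analysis is entirely imported from \cite{hale:1993}.
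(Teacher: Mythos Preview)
Your proposal is correct and takes essentially the same approach as the paper: the paper does not actually prove this proposition but simply states that it ``is just Theorem 1.1 in chapter 10 of \cite{hale:1993} (along with some facts that emerge in the proof), adapted to our situation.'' Your write-up makes explicit the one piece the paper leaves implicit --- namely, how the expansivity estimate $\|h(\phi)-h(\psi)\|\ge \tfrac12\|\phi-\psi\|$ follows from $Dh(0)=0$ by shrinking $\tilde V$ --- which is exactly the sort of ``fact that emerges in the proof'' the paper alludes to.
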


We apply these facts in the following lemma. 

\begin{lemma} \label{lem:W^u}
Given any $c > 0$, up to time translation there are exactly two nontrivial solutions $\phi_+$ and $\phi_-$ of
$\eqref{eq:wpe}$ which satisfy the boundary condition $\eqref{eq:negbc}$.  There is some $x_0$ such that $\phi_-(x)-1$ is strictly negative and $\phi_+(x)-1$ is strictly positive for all $x \leq x_0$.  
\end{lemma}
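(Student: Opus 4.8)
The plan is to read off both claims from the one‑dimensional unstable manifold of \eqref{eq:wpe} at $1$, as furnished by Proposition \ref{prop:umfact}. By (G1.2) we have $\sum_{i=0}^N \frac{\partial g}{\partial s_i}({\bf 1}) < 0$, so part (2) of Proposition \ref{lem:cdef} gives a unique positive root $\lambda$ of the characteristic equation at $1$, with eigenfunction $e^{\lambda\cdot}$ and every other root in the open left half plane; hence $W^u(1,V) = 1 + h(\tilde V)$ with $\tilde V$ a neighbourhood of $0$ in $U = \{ke^{\lambda\cdot}: k \in \R\}$, $h$ of class $C^1$, $h(0)=0$, and $\|h(\phi)-h(\psi)\|\ge\|\phi-\psi\|/2$. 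In particular $h$ is injective and $h(\phi)=0$ forces $\phi=0$. As in the standard construction (Chapters 7 and 10 of \cite{hale:1993}), shrinking $V$ we may also assume $\pi\circ h = \mathrm{id}_{\tilde V}$ and write $h(\phi)=\phi+r(\phi)$ with $\pi r\equiv 0$, $r(0)=0$ and $\mathrm{Lip}(r)\le 1/2$ on $\tilde V$ — this is precisely what yields the displayed estimate. Since $\lambda>0$ gives $\|e^{\lambda\cdot}\|_{C[-r,0]}=1$, evaluating at $s=0$ yields, for $\phi = ke^{\lambda\cdot}\in\tilde V$,
\[
\bigl(1 + h(\phi)\bigr)(0) - 1 = k + r\bigl(ke^{\lambda\cdot}\bigr)(0), \qquad \bigl|r\bigl(ke^{\lambda\cdot}\bigr)(0)\bigr| \le |k|/2,
\]
so this quantity has the same sign as $k$ whenever $k\ne 0$.

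Next I would invoke the (standard) reduced dynamics on the curve $W^u(1,V)$: writing a solution on it as $\phi_x = 1 + h\bigl(k(x)e^{\lambda\cdot}\bigr)$, equation \eqref{eq:wpe} forces $k$ to obey a scalar ODE $k' = F(k)$ with $F(0)=0$, $F'(0)=\lambda>0$; after a further shrinking of $V$, $0$ is the only zero of $F$ and $F(k)$ has the sign of $k$, so $(0,\delta)$ and $(-\delta,0)$ are each a single orbit along which $k(x)\to 0$ with constant sign as $x\to-\infty$. Let $\phi_+$ (resp.\ $\phi_-$) be the solution of \eqref{eq:wpe} through $1 + h(k_0 e^{\lambda\cdot})$ with $k_0>0$ (resp.\ $k_0<0$) small, extended to its maximal interval of existence; because membership in $W^u(1,V)$ propagates backward in time, $\phi_\pm$ is defined on a left half‑line with $\phi_\pm(-\infty)=1$, and it is nonconstant since $h(k_0 e^{\lambda\cdot})\ne 0$. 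By the sign computation above, there is $x_0$ (take the smaller of the two thresholds) with $\phi_+(x)-1>0$ and $\phi_-(x)-1<0$ for all $x\le x_0$; in particular $\phi_+$ and $\phi_-$ are not translates of one another.

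It remains to prove that these are, up to translation, the only nontrivial solutions of \eqref{eq:wpe} satisfying $\phi(-\infty)=1$. Let $\phi$ be such a solution. Then $\phi_x\to 1$ in $C$ as $x\to-\infty$, hence $\phi_x\in V$ for $x$ sufficiently negative, and by the definition of $W^u(1,V)$ (together with backward propagation of membership) the orbit of $\phi$ over such $x$ lies on the curve $W^u(1,V)$, i.e.\ $\phi_x - 1 = h\bigl(k(x)e^{\lambda\cdot}\bigr)$ with $k(x)\ne 0$ (else $\phi_x=1$ and $\phi\equiv 1$). Since $k$ solves $k' = F(k)$ near the hyperbolic rest point $0$, $\sgn k(x)$ is constant for $x$ in a left half‑line; if that sign is positive, $\phi$ traverses (for $x$ negative) the same orbit $(0,\delta)$ of the scalar ODE as $\phi_+$ and so is a time translate of $\phi_+$, and a negative sign likewise forces $\phi$ to be a translate of $\phi_-$.

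The only nonroutine point — hence the main obstacle — is justifying that on the local unstable manifold the sign of $\phi(x)-1$ is eventually governed by a single real coordinate $k$ whose own sign cannot change: this needs that $h$ is graph‑like over $U$ with a Lipschitz‑small nonlinear part and that the reduced flow is $k' = \lambda k + o(k)$, both of which are part of the standard construction behind Proposition \ref{prop:umfact}. Alternatively one may sidestep these by invoking the asymptotics of solutions on the unstable manifold, $e^{-\lambda x}(\phi_x - 1)\to a\,e^{\lambda\cdot}$ in $C$ as $x\to-\infty$ for some $a\ne 0$, from which $\sgn(\phi(x)-1)\to\sgn(a)$ and the trichotomy $a>0$ / $a=0$ (the constant solution) / $a<0$ follow immediately.
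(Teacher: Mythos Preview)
Your proposal is correct and follows essentially the same route as the paper: both arguments read the sign of $\phi(x)-1$ off the one–dimensional local unstable manifold $W^u(1,V)=1+h(\tilde V)$, using that $h$ is $C^1$ with $h(0)=0$ so that the linear part (a nonzero multiple of $e^{\lambda\cdot}$) dominates for small $k$, and both infer uniqueness from $h$ being a local diffeomorphism. The only tactical differences are that the paper bounds the whole segment $\phi_x-1$ at once via the positive infimum $e^{-\lambda r}$ of $e^{\lambda\cdot}$ on $[-r,0]$ (rather than evaluating at $s=0$ and tracking $\sgn k(x)$), and that you spell out the reduced scalar flow $k'=F(k)$ for the uniqueness step that the paper simply omits.
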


\begin{proof}
Let $V$ be as in the above proposition.  Any solution $\phi$ of \eqref{eq:wpe} satisfying \eqref{eq:negbc} certainly satisfies $\phi_x \in W^u(1,V)$ for all sufficiently negative $x$.  For such $x$, let us write $\phi_x - 1 = y_x = h(ke^{\lambda \cdot})$, where $h$ is as in Proposition \ref{prop:umfact}.   

Since $h$ is $C^1$, we have 
\[
y_x = \gamma k e^{\lambda \cdot} + f(k e^{\lambda \cdot}), 
\]
where $\|f(k e^{\lambda \cdot})\| \leq \epsilon \|k e^{\lambda \cdot}\|$ for any given $\epsilon$, provided that $k$ is close enough to zero.  The expansivity condition in Proposition $\ref{prop:umfact}$ implies that $\gamma$ is nonzero.  

Observe that the point $k e^{\lambda \cdot} \in C$ has maximum absolute value $|k|$ and minimum absolute value $|k|e^{-\lambda r}$.  Therefore, by choosing $k$ close enough to $0$ we have that $\|f(k e^{\lambda x})\| < |\gamma k| e^{-\lambda r}$, whence we can conclude that $y_x$ is all of one sign and of the same sign as $\gamma k e^{\lambda \cdot}$.  Thus, for any $\phi \in W^u(1,V)$ sufficiently close to $1$, $\phi - 1$ is all of one sign, and both signs are possible.  

Thus $\phi_-$ and $\phi_+$ as described in the lemma exist.  The uniqueness of $\phi_-$ and $\phi_+$ up to time translation can be proven by noting that $h$ is a diffeomorphism near zero; we omit the details.   
\end{proof}

\begin{proposition}\label{prop:uccs}
Suppose that $c_k \to c_0 \neq 0$ as $k \to \infty$, and that a globally defined solution $\phi_k : \R \to \R$ of \eqref{eq:wpe} of speed $c_k$ exists for all $k$.  Suppose moreover that there is some $M$ such that $|\phi_k(x)| \leq M$ for all $k \in \N$ and all $x \in \R$.  Then $\phi_k$ converges, uniformly on compact subsets of $\R$, to a solution $\phi_0$ of \eqref{eq:wpe} of speed $c_0$.  
\end{proposition}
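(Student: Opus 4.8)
\medskip
\noindent\emph{Proof proposal.} The plan is a standard Arzel\`a--Ascoli compactness argument applied to the integrated form of the wave profile equation. First I would establish equicontinuity of the family $\{\phi_k\}$. Since $c_k \to c_0 \neq 0$, there is $k_0$ and $\underline{c} > 0$ with $c_k \geq \underline{c}$ for all $k \geq k_0$; this is exactly where the hypothesis $c_0 \neq 0$ enters. Because $g$ is locally Lipschitz it is bounded on the compact cube $[-M,M]^{N+1}$, say by $G := \sup_{\|s\|_\infty \leq M} |g(s)|$. Each $\phi_k$ is globally defined, so all delayed arguments $\phi_k(x - \kappa_i)$ make sense, and from $c_k\phi_k'(x) = -g(\Phi_k(x))$ together with $|\phi_k| \leq M$ we obtain $|\phi_k'(x)| \leq G/\underline{c}$ for all $x \in \R$ and all $k \geq k_0$. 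Thus $\{\phi_k\}_{k \geq k_0}$ is a uniformly bounded, uniformly Lipschitz (hence equicontinuous) family on $\R$.

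Next I would extract a limit. Exhausting $\R$ by the intervals $[-n,n]$, applying Arzel\`a--Ascoli on each, and running a diagonal argument yields a subsequence (which I do not relabel) and a continuous function $\phi_0 : \R \to \R$ with $|\phi_0| \leq M$ such that $\phi_k \to \phi_0$ uniformly on every compact subset of $\R$. In particular, for each fixed $i$ we have $\phi_k(\cdot - \kappa_i) \to \phi_0(\cdot - \kappa_i)$ uniformly on compacts, so $\Phi_k \to \Phi_0$ uniformly on compacts, and by continuity of $g$ the functions $t \mapsto g(\Phi_k(t))$ converge to $t \mapsto g(\Phi_0(t))$ uniformly on compact sets.

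To see that $\phi_0$ solves \eqref{eq:wpe} with speed $c_0$, I would pass to the limit in the integrated equation: for any $y < x$,
\[
c_k\bigl(\phi_k(x) - \phi_k(y)\bigr) = -\int_y^x g(\Phi_k(t))\,dt.
\]
As $k \to \infty$ the left side tends to $c_0\bigl(\phi_0(x) - \phi_0(y)\bigr)$, and by uniform convergence of the integrand on $[y,x]$ the right side tends to $-\int_y^x g(\Phi_0(t))\,dt$, so
\[
c_0\bigl(\phi_0(x) - \phi_0(y)\bigr) = -\int_y^x g(\Phi_0(t))\,dt \qquad \text{for all } y < x.
\]
The right-hand side is a $C^1$ function of $x$ since $t \mapsto g(\Phi_0(t))$ is continuous; hence $\phi_0 \in C^1(\R)$ and $c_0\phi_0'(x) = -g(\Phi_0(x))$, i.e.\ $\phi_0$ is a solution of \eqref{eq:wpe} of speed $c_0$.

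The argument is essentially routine, so there is no serious obstacle; the only points needing care are the use of $c_0 \neq 0$ to bound the derivatives uniformly, and the diagonal extraction producing a single subsequence valid simultaneously on all of $\R$. One caveat: Arzel\`a--Ascoli only delivers a convergent subsequence, so strictly speaking the conclusion should be read subsequentially unless the $\phi_k$ are normalized (e.g.\ pinned at a common value at one point) and a uniqueness statement for the limit is invoked, in which case the full sequence converges; for the uses made of this proposition the subsequential version suffices.
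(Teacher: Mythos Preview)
Your proof is correct and follows essentially the same Arzel\`a--Ascoli compactness argument as the paper; the only cosmetic difference is that you pass to the limit via the integrated equation, whereas the paper applies Ascoli a second time to the derivatives $\phi_k'$ and invokes the standard theorem on uniform convergence of derivatives (Rudin, Theorem~7.17). Your closing caveat about subsequential convergence is well taken---the paper's own proof likewise only extracts a subsequence, and in every application of the proposition the $\phi_k$ are pinned so that the subsequential statement suffices.
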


We emphasize that, even if the $\phi_k$ all connect $1$ to $0$,  $\phi_0$ need not do so. 

\begin{proof}
Since the bounds $|\phi_k(x)| \le M$ and 
\[
|\phi'(x)| \le \frac{\sup\{ \ |g(s)| , \ |s| \leq M \  \}}{\min_k c_k}
\]
hold uniformly for $x \in \R$ and $k \in \mathbb{N}$, the sequence $\phi_k$ is uniformly equicontinuous and pointwise bounded and hence, by Ascoli's Theorem and a diagonalization argument, has a subsequence that converges on compact sets to some limit $\phi_0$.  Since the $\phi_k$ satisfy $\eqref{eq:wpe}$, $\phi_k'$ is equicontinuous and pointwise bounded as well and hence also converges uniformly on compact sets to some limit.  This limit is equal to $-g(\Phi_0(x))/c$ for all $x$, but is also equal to $\phi_0'$ by a standard theorem (see, for example, Theorem 7.17 of \cite{rudin}).  It follows that $\phi_0$ is a solution of \eqref{eq:wpe}.
\end{proof}

We close this section by establishing an estimate on the basin of attraction for \eqref{eq:wpe} about $0$ that is locally uniform in $c$.    Suppose that $T_c(t): C \to C$ is the solution operator for \eqref{eq:wpe} (with $c$ the wave speed) and that $L_c(t): C \to C$ is the solution operator for the linearization of \eqref{eq:wpe} at zero:
\begin{equation}\label{eq:lwpe}
cy'(t) = - \nabla g({\bf 0}) \cdot Y(t), \ \ Y(t) = (y(t),y(t-\kappa_1),\ldots,y(t - \kappa_N)). 
\end{equation}
As proved above, as long as $c > b(\nabla g({\bf 0}))$ there is some $\lambda_c > 0$ such that every root of the characteristic equation at zero has real part less than $-\lambda_c$.  For every such $c$ and $\lambda_c$, it is standard that there is a $K_c > 0$ such that 
\[
\|L_c(t) y_0\| \leq K_c e^{-\lambda_c t} \|y_0\|.
\]

We will need the following elementary lemma. 

\begin{lemma} \label{lem:linunifc}
Given $c_1 > b(\nabla g({\bf 0}))$, $\epsilon > 0$, and $\tau > 0$, there is a $\delta > 0$ such that $|c - c_1| \leq \delta$ implies that 
\[
\|L_{c}(t)y_0 - L_{c_1}(t)y_0\| \leq \epsilon\|y_0\| 
\]
for all $t \in [0,\tau]$.  
\end{lemma}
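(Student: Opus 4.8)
The plan is to prove Lemma \ref{lem:linunifc} by comparing the two linear delay semigroups $L_c$ and $L_{c_1}$ through a variation-of-constants estimate. First I would rewrite \eqref{eq:lwpe} in the normalized form $y'(t) = -\frac{1}{c}\nabla g(\mathbf{0})\cdot Y(t)$, so that the only $c$-dependence is the scalar factor $1/c$ multiplying a fixed linear functional of the solution segment. Thus both $y(t) = (L_c(t)y_0)(0)$ and $\tilde y(t) = (L_{c_1}(t)y_0)(0)$ satisfy integral equations of the form $y(t) = y_0(0) - \frac{1}{c}\int_0^t \nabla g(\mathbf{0})\cdot Y(\sigma)\,d\sigma$ (with the corresponding segment interpretation for $\sigma < $ the relevant delays, using the fixed initial data $y_0$). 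Subtracting these two integral equations and writing $w(t) = y(t) - \tilde y(t)$, the difference picks up two contributions: a term proportional to $\left|\frac{1}{c} - \frac{1}{c_1}\right|$ times the (bounded, on $[0,\tau]$) solution $\tilde y$, and a term $\frac{1}{c}\int_0^t \nabla g(\mathbf 0)\cdot W(\sigma)\,d\sigma$ where $W$ is the segment of $w$.

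Next I would invoke the a priori bounds that make this work. Since $c_1 > b(\nabla g(\mathbf{0}))$, for $\delta$ small enough we may restrict to $c$ in a compact interval $[c_1 - \delta_0, c_1 + \delta_0]$ bounded away from $0$; on such an interval $1/c$ is bounded, say by $\bar B$, and $\|\nabla g(\mathbf 0)\|$ is a fixed constant $A$. By the standard exponential estimate $\|L_{c_1}(t)y_0\| \le K_{c_1}e^{-\lambda_{c_1}t}\|y_0\|$ quoted just before the lemma — or simply by Gronwall applied to $\tilde y$ on the finite interval $[0,\tau]$ — we have $\|\tilde y_t\| \le C_\tau \|y_0\|$ for all $t \in [0,\tau]$, with $C_\tau$ independent of the small perturbation of $c$. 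Feeding these bounds into the integral inequality for $\|w_t\|$ gives
\[
\|w_t\| \le \bar B\, A \int_0^t \|w_\sigma\|\,d\sigma + \left|\tfrac{1}{c} - \tfrac{1}{c_1}\right| A\, \tau\, C_\tau \|y_0\|, \qquad t \in [0,\tau],
\]
and Gronwall's inequality then yields $\|w_t\| \le \left|\tfrac1c - \tfrac1{c_1}\right| A\tau C_\tau e^{\bar B A \tau}\|y_0\|$ for all $t \in [0,\tau]$. Since $\left|\tfrac1c - \tfrac1{c_1}\right| \to 0$ as $c \to c_1$, choosing $\delta$ small enough to make the prefactor at most $\epsilon$ completes the argument. (One must be slightly careful that $\|L_c(t)y_0 - L_{c_1}(t)y_0\|$ is a sup-norm of a segment in $C$, not just a pointwise bound; but the segment $w_t$ for $t \in [0,\tau]$ agrees with $y_0$-shifted data before time $0$ and with $w$ itself after, so the sup-norm of $w_t$ is controlled by $\sup_{[-r,\tau]}|w|$, which is exactly what the integral inequality bounds — the negative-time part contributing zero since both solutions share the initial data $y_0$.)

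I do not expect a serious obstacle here; the lemma is genuinely elementary, as the authors state. The only mild subtlety is bookkeeping with the phase space $C[-r,0]$: making sure that the Gronwall argument is set up on the scalar function $w(\cdot)$ over $[-r,\tau]$ rather than directly on the $C$-valued trajectory, and then translating back to the segment norm at the end. A secondary point is ensuring all constants ($\bar B$, $C_\tau$, $K_{c_1}$) are chosen uniformly over the shrinking $\delta$-neighborhood of $c_1$ before $\delta$ is finally pinned down — this is automatic once we first fix a compact interval around $c_1$ inside $(b(\nabla g(\mathbf 0)), \infty)$ and derive all bounds there.
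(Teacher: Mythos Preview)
Your proposal is correct and follows essentially the same route as the paper: both decompose the difference of the two linear flows into a term proportional to $\left|\tfrac{1}{c}-\tfrac{1}{c_1}\right|$ acting on the (uniformly bounded on $[0,\tau]$) reference solution plus a term controlled by the difference segment itself, and then close via Gronwall/comparison with a scalar linear ODE. The only cosmetic distinction is that the paper estimates $|w'(t)-y'(t)|$ directly and compares to the ODE $u' = \tfrac{M}{c}u + \mathrm{const}\cdot\|y_0\|$, whereas you phrase the same estimate in integrated form; your attention to the segment-norm bookkeeping (that $w$ vanishes on $[-r,0]$ since the initial data coincide) is exactly the point the paper uses implicitly when writing $\|w_\tau - y_\tau\| \le \int_0^\tau |w'(t)-y'(t)|\,dt$.
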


\begin{proof}
With notation as in the statement of the lemma, write $c = c_1 + \eta$.  Given $y_0 \in C$, let us write $y$ for the continuation of $y_0$ as a solution of \eqref{eq:lwpe} with wave speed $c_1$ and $w$ for the continuation of $y_0$ as a solution of \eqref{eq:lwpe} with wave speed $c$ --- so 
\[
y_t = L_{c_1}(t)y_0 \ \ \mbox{and} \ \ w_t = L_c(t)y_0.
\]
Write $A y_t = - \nabla g({\bf 0}) \cdot Y(t)$; we have $y'(t) = Ay_t/c_1$ and $w'(t) = Aw_t/c$.  
For all $t>0$ we have the bounds
\[
|y'(t)| \leq \frac{M}{c_1}\|y_t\| \ \mbox{and} \ |w'(t)| \leq \frac{M}{c} \|w_t\|,
\]
where $M > |\nabla g({\bf 0})|$.  

Choose $d_1 > 0$ such that $\|y_t\| \leq d_1 \|y_0\|$ for all $t \in [0,\tau]$.  

Now, observe that $\|w_\tau - y_\tau\| \leq \int_0^\tau \|w'(t) - y'(t)\| \ dt$, and that for all $t \in [0,\tau]$ we have   
\begin{eqnarray*}
\|w'(t) - y'(t)\| & = & \left\| \frac{Aw_t}{c} - \frac{Ay_t}{c_1} \right\| \\
& = & \left\| \frac{A(w_t - y_t)}{c} + \frac{Ay_t}{c} - \frac{Ay_t}{c_1} \right\| \\
& \leq & \frac{M}{c}\|w_t - y_t\| + \frac{M\eta }{c_1(c_1+\eta)}\|y_t\| \\
& \leq & \frac{M}{c}\|w_t - y_t\| + \frac{d_1 M\eta }{c_1(c_1+\eta)}\|y_0\|.
\end{eqnarray*}
Comparing $\|w_t - y_t\|$ to the solution of the ODE
\[
u' = \frac{M}{c} u + \frac{d_1 M\eta }{c_1(c_1+\eta)}\|y_0\|, \ u(0) = 0
\]
yields the desired result for $|\eta|$ sufficiently small.  
\end{proof}

We now make use of two standard facts.  First, $L_c(t)$ is the derivative of $T_c(t)$ with respect to its functional coordinate at $0$.   Second, $T_c(t)$ is uniformly differentiable at $0$ with respect to $t$ and $c$, so we have the following: given any $\eta > 0$, $\tau_0 > 0$, $c_1 >  b(\nabla g (\bf{0}))$, and $\delta \in (0,c_1 - b(\nabla g (\bf{0})))$, there is some $\epsilon > 0$ such that $\|y_0\| \leq \epsilon$ implies that 
\[
\|T_c(t)y_0 - L_c(t)y_0\| \leq \eta \|y_0\| \ \mbox{for all} \ t \in [0,\tau_0] \ \mbox{and} \ c \in [c_1 - \delta,c_1 + \delta].  
\] 
(See, for example, sections VII.5 and VII.6 of \cite{diekmann:1995}.)  The following proposition can be obtained from Theorem VII.1.3 in \cite{daleckii:1974} together with the fact that the sun-star calculus (see \cite{diekmann:1995}) can be used to write $\eqref{eq:wpe}$ as an ODE in a Banach space.  For completeness, we include the proof here.  The ideas are similar to those in section VII.5 of \cite{diekmann:1995}.  

\begin{proposition} \label{prop:unif_basin}
Let $c_1 > b(\nabla g({\bf 0}))$ be given.  Then there is a $\delta$ small enough that the following holds.  There is an $\eps > 0$ such that whenever $|c-c_1| < \delta$, then the basin of attraction for $\eqref{eq:wpe}$ about $0$ includes the ball of radius $\eps$ centered at the origin.
\label{prop:zero_stable}
\end{proposition}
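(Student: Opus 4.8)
The plan is to derive a uniform-in-$c$ estimate for the basin of attraction of the origin by comparing the nonlinear semiflow $T_c(t)$ with the linearization $L_c(t)$, using the standard exponential decay bound $\|L_c(t)y_0\| \le K_c e^{-\lambda_c t}\|y_0\|$ together with Lemma \ref{lem:linunifc} to make the relevant constants locally uniform in $c$. First I would fix $c_1 > b(\nabla g({\bf 0}))$ and choose a decay rate $\lambda^\ast > 0$ and a $\delta_0 > 0$ such that, for all $c$ with $|c - c_1| \le \delta_0$, every root of the characteristic equation $D(\lambda; c, \nabla g({\bf 0}))$ has real part less than $-\lambda^\ast$; this is possible because $c \mapsto b(\nabla g({\bf 0}))$-type bounds are continuous (the roots of $D$ depend continuously on $c$, and $b(\nabla g({\bf 0})) < c_1$). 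Next I would establish a constant $K$, uniform for $c \in [c_1 - \delta_0, c_1 + \delta_0]$, with $\|L_c(t)y_0\| \le K e^{-\lambda^\ast t}\|y_0\|$ for all $t \ge 0$: this follows from the standard single-$c$ bound plus Lemma \ref{lem:linunifc} applied on a fixed finite window $[0,\tau]$ (chosen so that $K_{c_1} e^{-\lambda_{c_1}\tau}$ is small), and then iterating the semigroup property over successive windows of length $\tau$.

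Second, I would fix $\tau$ large enough that $K e^{-\lambda^\ast \tau} \le 1/4$, say, and set $\rho := K e^{-\lambda^\ast\tau} < 1$. Using the uniform differentiability of $T_c(t)$ at $0$ quoted just before the proposition — given $\eta > 0$ there is $\epsilon_0 > 0$ with $\|T_c(t)y_0 - L_c(t)y_0\| \le \eta\|y_0\|$ for all $t \in [0,\tau]$ and $|c - c_1| \le \delta_0$ whenever $\|y_0\| \le \epsilon_0$ — I would choose $\eta$ so small that $\rho + \eta \le 1/2$ and so that moreover the orbit does not leave the ball of radius $\epsilon_0$ over one window: concretely, $\|T_c(t)y_0\| \le \|L_c(t)y_0\| + \eta\|y_0\| \le (K + \eta)\|y_0\|$ for $t \in [0,\tau]$, so shrinking $\epsilon$ below $\epsilon_0/(K+\eta)$ keeps the whole window inside the ball of radius $\epsilon_0$ where the estimate is valid.

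Third — the contraction/iteration step — I would run the argument inductively over windows $[n\tau,(n+1)\tau]$. If $\|T_c(n\tau)y_0\| \le \epsilon$ and $\epsilon \le \epsilon_0/(K+\eta)$, then at the window's end $\|T_c((n+1)\tau)y_0\| \le \|L_c(\tau)(T_c(n\tau)y_0)\| + \eta\|T_c(n\tau)y_0\| \le (\rho + \eta)\|T_c(n\tau)y_0\| \le \tfrac12 \|T_c(n\tau)y_0\|$, and during the window the norm never exceeds $(K+\eta)\|T_c(n\tau)y_0\| \le \epsilon_0$, so the linear approximation stays legitimate at the next step. Hence $\|T_c(n\tau)y_0\| \le 2^{-n}\epsilon \to 0$, and since the orbit over each window is controlled by its left endpoint, $T_c(t)y_0 \to 0$ as $t \to \infty$ for every $c$ with $|c - c_1| \le \delta := \min(\delta_0, \text{the }\delta\text{ from the differentiability statement})$ and every $y_0$ with $\|y_0\| \le \epsilon$. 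This proves the proposition.

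The main obstacle is the uniformity in $c$ of the linear decay constant $K$: for a single fixed $c$ the bound $\|L_c(t)y_0\| \le K_c e^{-\lambda_c t}\|y_0\|$ is classical, but $K_c$ could in principle blow up as $c$ ranges over the interval. The way around this is precisely Lemma \ref{lem:linunifc}: rather than trying to control $K_c$ directly, one controls $L_c(t)$ by $L_{c_1}(t)$ on a fixed compact time interval, absorbs the small error into the decay already present over that interval, and then propagates by the semigroup property — so no uniform bound on $K_c$ is needed, only the uniform smallness of $\|L_c(\tau) - L_{c_1}(\tau)\|$ for one well-chosen $\tau$. Everything else is a routine Gronwall-type bootstrap of the kind carried out in section VII.5 of \cite{diekmann:1995}.
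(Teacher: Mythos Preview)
Your proposal is correct and follows essentially the same route as the paper's proof: fix $\tau$ so that $L_{c_1}(\tau)$ is a contraction, use Lemma~\ref{lem:linunifc} to carry this over to $L_c(\tau)$ for $c$ near $c_1$, use the uniform differentiability of $T_c(t)$ at $0$ to carry it further to $T_c(\tau)$, and then iterate over windows of length~$\tau$. The paper streamlines one step you separate out: rather than first promoting the $L_{c_1}$ bound to a uniform-in-$c$ exponential bound $\|L_c(t)y_0\|\le Ke^{-\lambda^\ast t}\|y_0\|$ and then running the nonlinear contraction, it simply writes the triangle inequality $\|T_c(\tau)y_0\|\le\|T_c(\tau)y_0-L_c(\tau)y_0\|+\|L_c(\tau)y_0-L_{c_1}(\tau)y_0\|+\|L_{c_1}(\tau)y_0\|$ and bounds each piece by $\|y_0\|/8$ directly. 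Your final paragraph in fact describes exactly this shortcut, so the two arguments coincide.
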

\begin{proof}
As above, choose $K>0$ and $\lambda>0$ such that 
\[
\|L_{c_1}(t)y_0\| < Ke^{-\lambda t} \|y_0\|
\] 
for all $t \geq 0$.  

Choose $\tau$ large enough that $K e^{-\lambda \tau} \leq 1/8$.  Now choose $\delta$, as in Lemma $\ref{lem:linunifc}$, small enough so that 
\be
\|L_{c_1}(t)y_0 - L_c(t)y_0\| \leq \|y_0\|/8 \ \mbox{for all}  \ t \in [0,\tau] \label{eq:propest1}
\ee
whenever $|c - c_1| < \delta$.  Now choose $\epsilon > 0$ small enough that $\|y_0\| \leq \epsilon$ implies that, for all $t \in [0,\tau]$ and any $c \in [c_1-\delta,c_1+\delta]$, 
\be
\|T_c(t) (y_0) - L_c(t) y_0\| \leq \frac{\|y_0\|}{8}. \label{eq:propest2}
\ee
At $t = \tau$, then, for $\|y_0\| \leq \epsilon$ we have
\begin{eqnarray*}
& & \|T_c(\tau)(y_0)\| \leq \|T_c(\tau)(y_0) - L_{c_1}(\tau)(y_0)\| + \|L_{c_1}(\tau)(y_0)\| \\
& \leq & \|T_c(\tau)(y_0) - L_{c}(\tau)y_0\| + \|L_{c}(\tau)y_0 - L_{c_1}(\tau)y_0\| +  Ke^{-\lambda t} \|y_0\| \\
& \leq & \|y_0\|/2.  
\end{eqnarray*}
Let $t \ge 0$ be given and write $t = n\tau + t_1$ where $t_1 \in [0,\tau)$.
Write $T_c(t)y_0 = \left(T_c(t_1) - L_c(t_1) + L_c(t_1)\right) T_c(\tau)^n (y_0)$.  Since $\| T_c(\tau)^n y_0 \| \le 2^{-n} \|y_0\| < \eps$ the estimates $\eqref{eq:propest1}$ and $\eqref{eq:propest2}$ hold and we have
\[ \| T_c(t)y_0\| \le (\frac{1}{8} + K)2^{-n} \|y_0\|, \]
which goes to zero as $t \to \infty$.
\end{proof}

\section{Existence and uniqueness of monotone fronts when $c \ge c(\beta)$}

In this section we establish the existence and uniqueness of monotone fronts when $c \geq c(\beta)$.  In particular, we prove statements 2 and 3 of Theorem $\ref{thm:main}$.  Already from last section, we see that the only candidates for a front (i.e. a traveling wave solution connecting $1$ and $0$) are the solutions $\phi_+$ and $\phi_-$ described in Lemma \ref{lem:W^u}; of these, only $\phi_-$ might be an initially decreasing front.  Indeed, henceforth we will be preoccupied almost entirely with $\phi_-$ and whether it satisfies the boundary condition $\eqref{eq:posbc}$.   We begin with the $c > c(\beta)$ case.  Our proof that $\phi_-$ is actually a decreasing front when $c > c(\beta)$ proceeds in two steps: we show that $\phi_-$ is strictly decreasing until it crosses zero; and we show that $\phi_-$ never does cross zero.  We assume that (G1) holds throughout.  

\begin{lemma} \label{lem:decrease}
Let $\phi$ be a solution of \eqref{eq:wpe} that satisfies the boundary condition $\eqref{eq:negbc}$, and suppose that $x_0 \in \R$ is such that $\phi(x) \in (0,1)$ for all $x \leq x_0$.  Then $\phi'(x) < 0$ for all $x \leq x_0$.  In particular, if $\phi(x) \in (0,1)$ for all $x$, then $\phi$ is strictly decreasing everywhere.
\end{lemma}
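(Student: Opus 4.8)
The plan is to argue by contradiction, exploiting the structure of the wave profile equation near $1$ and the sign condition in (G1.3). Suppose $\phi$ satisfies $\eqref{eq:negbc}$ and $\phi(x) \in (0,1)$ for all $x \leq x_0$, but $\phi'(x_1) \geq 0$ for some $x_1 \leq x_0$. By Lemma $\ref{lem:W^u}$, since $\phi$ satisfies $\eqref{eq:negbc}$ and is not identically $1$, $\phi$ coincides (up to translation) with $\phi_-$, so there is some $x_* \leq x_0$ with $\phi(x) < 1$ for all $x \leq x_*$; moreover on $(-\infty, x_*]$ the profile is governed by the one-dimensional unstable manifold at $1$, and since the unique positive characteristic root $\lambda$ gives the leading behavior, $\phi(x) - 1 \sim \gamma k e^{\lambda x}$ with $\gamma k < 0$. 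In particular $\phi$ is \emph{strictly decreasing} for all sufficiently negative $x$: differentiating the asymptotic relation, $\phi'(x) \sim \lambda \gamma k e^{\lambda x} < 0$. So the set $S = \{\, x \leq x_0 : \phi'(x) \geq 0 \,\}$ is nonempty by assumption, bounded below, and I let $\tau = \inf S$. Then $\phi'(x) < 0$ for all $x < \tau$, and $\phi'(\tau) = 0$ by continuity of $\phi'$ (which holds since the right-hand side of $\eqref{eq:wpe}$ is continuous).

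The key step is to derive a contradiction at $x = \tau$ using (G1.3). Since $\phi$ is strictly decreasing on $(-\infty, \tau]$ and $\phi(\tau) \in (0,1)$, for each $i$ we have $\phi(\tau - \kappa_i) > \phi(\tau)$ because $\kappa_i > 0$; and since $\phi$ is decreasing toward $1$ at $-\infty$ while $\phi(\tau) < 1$, in fact $\phi(\tau - \kappa_i) \in (\phi(\tau), 1)$ provided $\phi(\tau-\kappa_i) < 1$ — this last point needs care, but follows because $\phi$ is strictly decreasing on all of $(-\infty,\tau]$ (its values never return to $1$ after leaving a neighborhood of $1$, precisely by strict monotonicity on that ray). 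Thus $\Phi(\tau) = (\phi(\tau), \phi(\tau-\kappa_1), \ldots, \phi(\tau-\kappa_N))$ satisfies $0 < s_0 < s_i < 1$ for all $i$, so (G1.3) gives $g(\Phi(\tau)) > 0$. But then $\eqref{eq:wpe}$ yields $c\phi'(\tau) = -g(\Phi(\tau)) < 0$, contradicting $\phi'(\tau) = 0$. Hence $S = \emptyset$, i.e. $\phi'(x) < 0$ for all $x \leq x_0$. The final sentence of the lemma is the special case $x_0 = +\infty$: if $\phi(x) \in (0,1)$ for all $x$, apply the above with $x_0$ arbitrary to conclude $\phi' < 0$ everywhere.

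The main obstacle I anticipate is the bookkeeping around the claim ``$\phi$ is strictly decreasing on all of $(-\infty,\tau]$ and its delayed values $\phi(\tau - \kappa_i)$ lie strictly between $\phi(\tau)$ and $1$.'' Strict monotonicity on the ray below $\tau$ is immediate from the definition of $\tau$ as the infimum of $S$, so $\phi(\tau - \kappa_i) > \phi(\tau)$ is clear; the subtlety is the upper bound $\phi(\tau - \kappa_i) < 1$. This is where one uses that $\phi$ emanates from $1$ along the unstable manifold with $\phi < 1$ nearby (Lemma $\ref{lem:W^u}$, the $\phi_-$ branch) together with strict monotonicity: once $\phi$ has dipped below $1$ it stays below $1$ on the whole ray where it is monotone decreasing, so $\phi(x) < 1$ for every $x \leq \tau$, in particular at $x = \tau - \kappa_i$. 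If $g$ happens to be defined and the hypothesis (G1.3) phrased to also cover the boundary case $\phi(\tau-\kappa_i)=1$ one could relax this, but the strict version above suffices. Everything else is a direct substitution into $\eqref{eq:wpe}$.
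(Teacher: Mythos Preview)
Your argument is correct but takes a genuinely different route from the paper's.  You use Lemma~\ref{lem:W^u} to conclude that $\phi$ is strictly decreasing on some initial ray $(-\infty,x_*]$, then set $\tau=\inf\{x\le x_0:\phi'(x)\ge 0\}$ and obtain a contradiction at $\tau$ from (G1.3).  The paper instead runs a backward iteration: assuming $\phi'(x_0)\ge 0$, one has $g(\Phi(x_0))\le 0$, so (G1.3) forces some $\phi(x_0-\kappa_i)\le\phi(x_0)$; from this the paper manufactures a point $x_1\le x_0-d$ (with $d=\min_i\kappa_i$) satisfying $\phi'(x_1)\ge 0$ and $\phi(x_1)\le\phi(x_0)$, and iterates to contradict $\phi(-\infty)=1$.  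The paper's proof is entirely self-contained --- it uses only the limit $\phi(-\infty)=1$ and never invokes the unstable manifold --- whereas yours is cleaner once initial strict monotonicity is in hand but imports that fact from Lemma~\ref{lem:W^u}.

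Two minor points.  First, your paragraph worrying about $\phi(\tau-\kappa_i)<1$ is unnecessary: since $\tau-\kappa_i<\tau\le x_0$, the hypothesis $\phi(x)\in(0,1)$ for $x\le x_0$ gives this immediately, with no appeal to the unstable manifold.  Second, ``differentiating the asymptotic relation'' $\phi(x)-1\sim\gamma k e^{\lambda x}$ is not valid as written; asymptotics cannot be differentiated in general.  The conclusion $\phi'(x)<0$ for $x$ near $-\infty$ is nonetheless correct and follows by feeding the asymptotic for $\Phi(x)-\mathbf{1}$ into the equation $c\phi'(x)=-g(\Phi(x))=-\nabla g(\mathbf{1})\cdot(\Phi(x)-\mathbf{1})+o(|\Phi(x)-\mathbf{1}|)$ and using that $\lambda$ satisfies the characteristic equation at~$1$.
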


\begin{proof}
It is sufficient to prove that $\phi'(x_0) < 0$.  Imagine, to the contrary, that $\phi'(x_0) \geq 0$.  Write $d$ for the minimum nonzero delay $\kappa_i$.  We claim that there is some $x_1 \le x_0 - d$ with $\phi'(x_1) \geq 0$ and $\phi(x_1) \leq \phi(x_0)$. Induction then contradicts the assumption that $\phi(-\infty) = 1$.   

We prove the claim.  To say that $\phi'(x_0) \geq 0$ is to say that $g(\Phi(x_0)) \le 0$.   Since $\phi(x_0 - \kappa_i) \in (0,1)$ for each $i$ it follows from the first inequality in (G1.3) that there is some $i$ with $\phi(x_0 - \kappa_i) \leq  \phi(x_0)$.  If $\phi'(x_0 - \kappa_i) \ge 0$, we're done: just take $x_1 = x_0 - \kappa_i$.  If $\phi'(x_0 - \kappa_i) < 0$, then there must be a minimum $y$ on $(x_0 - \kappa_i,x_0)$ such that $\phi'(y) = 0$.  The same argument we just made shows that $\phi(y - \kappa_j) \leq \phi(y)$ for some $j$; since $\phi$ is strictly decreasing on $(x_0 - \kappa_i,y)$ we see that $y - \kappa_j < x_0 - \kappa_i$, and that $\phi(y - \kappa_j) < \phi(x_0 - \kappa_i)$.  By the Mean Value Theorem, $\phi'(x_1) \geq 0$ for some $x_1 \in (y - \kappa_j, x_0 - \kappa_i)$.  This proves the claim and completes the proof.  
\end{proof}

\begin{proposition} \label{prop:exist}
If $c > c(\beta)$, the unique initially decreasing solution $\phi$ of $\eqref{eq:wpe}$ satisfying the boundary condition $\eqref{eq:negbc}$ is strictly decreasing for all time and satisfies the limit $\eqref{eq:posbc}$.  In particular, it is a monotone front.
\end{proposition}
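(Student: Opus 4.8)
The plan is to show that $\phi := \phi_-$ from Lemma~\ref{lem:W^u} is a monotone front by ruling out the only two ways it can fail: (i) it decreases into negative values, or (ii) it decreases but fails to reach $0$ at $+\infty$. By Lemma~\ref{lem:decrease}, as long as $\phi$ stays in $(0,1)$ it is strictly decreasing, so the argument reduces to controlling what happens at the boundary of $(0,1)$.

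First I would rule out overshoot past $0$. Suppose for contradiction that $\phi$ takes a value $\le 0$, and let $x_*$ be the first point where $\phi(x_*) = 0$; by Lemma~\ref{lem:decrease} we have $\phi$ strictly decreasing on $(-\infty, x_*]$ and $\phi(x) \in (0,1)$ for $x < x_*$. The idea is to compare $\phi$ near $-\infty$ with the exponential solution of the linear equation governing $\beta$. Since $\phi$ lies on the unstable manifold at $1$ and decreases, for large negative $x$ we have $1 - \phi(x) \sim K e^{\lambda x}$ where $\lambda > 0$ is the positive root of the characteristic equation at $1$. On the other hand, on the interval where $0 < \phi(x) < \phi(x - \kappa_i) < 1$ for all $i$ — which holds near $-\infty$ by monotonicity — hypothesis (G1.3) gives $c\phi'(x) = -g(\Phi(x)) > -\sum_{i=0}^N \beta_i \phi(x - \kappa_i)$, i.e. $\phi$ is a supersolution of the linear delay equation $c\psi'(x) = -\sum \beta_i \psi(x-\kappa_i)$. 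For $c > c(\beta)$, Proposition~\ref{lem:cdef}(4) guarantees this linear equation has a real root; in fact for $c > c(\beta)$ there are two real roots $0 < \mu_1 < \mu_2$ of $D(\mu;c,\beta)$, and the associated exponential $\psi(x) = e^{\mu_1 x}$ is a positive, strictly decreasing solution bounded above by any prescribed constant after translation. A comparison argument for the delay equation — using that $\phi - 1 < 0$ is all of one sign near $-\infty$ and that $\phi$ is a supersolution where it lies in $(0,1)$ with larger delayed values — forces $\phi(x) \ge \psi(x) > 0$ for all $x$ on the relevant interval, contradicting $\phi(x_*) = 0$. Here I would set up the comparison carefully on the first interval $(-\infty, x_*]$: either $\phi$ stays above a suitably translated $e^{\mu_1 x}$ forever (so never reaches $0$, done), or there is a first crossing, at which point the supersolution inequality and the delay structure (delayed arguments are larger, $\beta_i \ge 0$ for $i \ge 1$) yield the wrong sign for the derivative of $\phi - \psi$ at the crossing — the standard delay-comparison contradiction.

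Having shown $\phi(x) \in (0,1)$ for all $x$, Lemma~\ref{lem:decrease} gives that $\phi$ is strictly decreasing everywhere, hence $\phi(\infty) =: \ell \in [0,1)$ exists. Then $\phi'(x) \to 0$ along a sequence (since $\phi$ is monotone and bounded, $\int |\phi'| < \infty$, so $\liminf |\phi'| = 0$), and by the wave profile equation $g(\boldsymbol{\ell}) = 0$; also $\ell < 1$ since $\phi$ is strictly decreasing and $\phi(-\infty) = 1$. By (G1.1), $g(\mathbf{a}) \ne 0$ for $a \in (0,1)$, so $\ell = 0$, which is exactly the boundary condition \eqref{eq:posbc}. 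Uniqueness of the initially decreasing solution satisfying \eqref{eq:negbc} is already in Lemma~\ref{lem:W^u}, so $\phi$ is \emph{the} monotone front.

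The main obstacle is the overshoot step: making the comparison with the linear $\beta$-equation rigorous for a delay equation, since the naive ``sliding'' argument requires controlling the delayed values $\phi(x - \kappa_i)$ relative to the comparison exponential, not just $\phi(x)$ itself. The clean way is to work on the maximal interval where $\phi \in (0,1)$, use monotonicity there so that $\phi(x-\kappa_i) > \phi(x)$, note that the exponential supersolution property of (G1.3) then applies on that whole interval, and invoke a comparison lemma for monotone (quasimonotone) delay equations — the delayed coefficients $\beta_i$ are nonnegative — to conclude $\phi$ cannot drop below a positive exponential. I expect the bookkeeping near $-\infty$ (matching the decay rate of $1 - \phi$ to ensure the comparison can be started) to be the fussiest part, but it follows from the unstable-manifold description in Proposition~\ref{prop:umfact} and Lemma~\ref{lem:W^u}.
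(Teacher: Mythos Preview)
Your overall architecture is right --- Lemma~\ref{lem:decrease} reduces everything to ruling out that $\phi$ reaches $0$, and the limit argument once $\phi\in(0,1)$ for all $x$ is fine (indeed slightly simpler than you wrote: since $\Phi(x)\to\boldsymbol{\ell}$, the equation gives $\phi'(\infty)=-g(\boldsymbol{\ell})/c$ directly).  The gap is in the overshoot step, precisely where you flagged it.

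The comparison you propose does not go through, because the linear $\beta$-equation is \emph{not} quasimonotone.  In $c\psi'= -\beta_0\psi-\sum_{i\ge1}\beta_i\psi(\cdot-\kappa_i)$ the delayed terms enter with coefficient $-\beta_i<0$, so the right-hand side is \emph{decreasing} in the delayed values.  Run your first-crossing argument: at the first $x_0$ with $\phi(x_0)=\psi(x_0)$ and $\phi>\psi$ on $(-\infty,x_0)$, the supersolution inequality gives $c\phi'(x_0)>-\beta_0\psi(x_0)-\sum_{i\ge1}\beta_i\phi(x_0-\kappa_i)$; but since $\phi(x_0-\kappa_i)>\psi(x_0-\kappa_i)$ and $\beta_i>0$, this lower bound is \emph{smaller} than $c\psi'(x_0)$, and no contradiction follows.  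So ``invoke a comparison lemma for quasimonotone delay equations'' is exactly the step that fails.  (Incidentally, for $c>c(\beta)$ the two real roots of $D(\cdot;c,\beta)$ are negative, not positive.)

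The paper sidesteps this entirely by comparing at the level of the logarithmic derivative rather than the solution.  Let $A$ be the minimizer in the definition of $c(\beta)$ and set $x_0=\sup\{x:\phi'\ge -A\phi\text{ on }(-\infty,x]\}$.  If $x_0<\infty$ then $\phi'(x_0)=-A\phi(x_0)$, and the scalar ODE bound $\phi'\ge -A\phi$ on $(-\infty,x_0]$ gives $\phi(x_0-\kappa_i)\le e^{A\kappa_i}\phi(x_0)$.  Plugging this into (G1.3) yields
\[
c\phi'(x_0) > -\Bigl(\beta_0+\sum_{i\ge1}\beta_i e^{A\kappa_i}\Bigr)\phi(x_0) = -c(\beta)\,A\,\phi(x_0),
\]
hence $\phi'(x_0)>-A\phi(x_0)$ since $c>c(\beta)$ --- a contradiction.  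The point is that bounding $\phi'/\phi$ converts the delayed values into multiplicative factors $e^{A\kappa_i}$, and the anti-monotone $\beta_i$-terms now work \emph{for} you rather than against you.  This is the missing idea; once you have it, no delay-equation comparison principle is needed and the ``bookkeeping near $-\infty$'' disappears (the set $\{\phi'\ge -A\phi\}$ is trivially nonempty since $\phi'(-\infty)=0$, $\phi(-\infty)=1$).
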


\begin{proof}
Let 
\[
A = \arg\min_{x > 0} \frac{\beta_0 + \sum_{i=1}^N \beta_i e^{\kappa_i x}}{x}.
\]
(The minimum exists because $\sum_{i=0}^N \beta_i > 0$ --- recall Proposition \ref{lem:cdef}.)  Since $c > c(\beta)$, for some $\delta \in (0,A)$ we have
\[
c = \frac{\beta_0 + \sum_i \beta_{i=1}^N e^{\kappa_i A}}{A - \delta}.
\]

Consider the supremum
\[ 
x_0 := \sup\{ x \; | \; \phi'(y) \ge -A\phi(y) \mbox{ for all } y \le x\}.
\]
Since $\phi(-\infty) = 1$ while $\phi'(-\infty) = 0$, the set over which we are taking the supremum is not empty.  We will show that $x_0 = \infty$, precluding that $\phi$ is ever equal to $0$.  

By way of contradiction, imagine that $x_0 < \infty$.  Since $\phi$ is $C^1$, we must in fact have $\phi'(x_0) = -A\phi(x_0)$.  Comparing $\phi$ to the ODE
\[
\psi' = -A\psi, \ \ \psi(x_0) = \phi(x_0)
\]
yields that, for all $\kappa_i$, 
\[
\phi(x_0-\kappa_i) \leq \phi(x_0)e^{A \kappa_i}  
\]
and therefore that
\begin{eqnarray*}
& & \phi'(x_0) = \frac{- g(\phi(x_0),\phi(x_0 - \kappa_1),\ldots,\phi(x_0 - \kappa_N))}{c} \\
& \geq & \frac{-\beta_0\phi(x_0) - \sum_{i=1}^N \beta_i \phi(x_0 - \kappa_i)}{c} \\
& \geq & -\left[\frac{\beta_0 + \sum_{i=1}^N \beta_i e^{A \kappa_i}}{c}\right]\phi(x_0) \\
& = & -(A - \delta)\phi(x_0) > -A\phi(x_0),
\end{eqnarray*}
a contradiction.  Note that in the second line we have used the second inequality in (G1.3); this is the only place in this work where this inequality is used.

Thus the solution $\phi$ is decreasing for all $x$ and bounded below by zero.  It follows that the limits $\phi(\infty)$ and $\phi'(\infty)$ exist, that $\phi'(\infty) = 0$, and that $\phi(\infty) \in [0,1)$.  Thus $\phi(\infty) = g(\Phi(\infty))$, and it follows from (G1.1) that $\phi(\infty) = 0$.  This completes the proof.
\end{proof}

We have established the existence and uniqueness of a monotone front in the $c > c(\beta)$ case.  To 
show uniqueness among all fronts, as in statement 2 of Theorem $\ref{thm:main}$, we must rule out the possibility that $\phi_+(\infty) = 0$.  This is the purpose of hypothesis (G2).   Since $\phi_+$ satisfies $\eqref{eq:negbc}$ but is not identically one, there must be some $x_0 \in \R$ such that $\phi_+(x_0) > 1$ and $\phi_+(x_0) \geq \phi_+(x_0 + s)$ for all $s \in [-r,0]$.  In this case, though, hypothesis (G2) yields that $\phi_+'(x_0) > 0$.  The conditions just articulated will be preserved as $x$ moves forward; thus in fact $\phi_+(x)$ is strictly increasing for all $x \geq x_0$.  

Thus we have

\begin{proposition}
If $c > c(\beta)$, $\phi_-$ is the unique monotone front connecting $1$ to $0$.  If in addition (G2) holds, $\phi_-$ is the unique front connecting $1$ to $0$. 
\end{proposition}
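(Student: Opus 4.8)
The proposition assembles results already established, so the plan is simply to put the pieces together in the right order and argue that neither of the two candidate profiles other than $\phi_-$ can do the job.

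First I would record what the candidates are. A front is a nontrivial solution of \eqref{eq:wpe} satisfying \eqref{eq:negbc} — the constant solution $1$ is excluded, since it violates \eqref{eq:posbc} — so by Lemma~\ref{lem:W^u} every front is, up to translation, either $\phi_-$ or $\phi_+$. Proposition~\ref{prop:exist} already gives that for $c>c(\beta)$ the profile $\phi_-$ is a strictly decreasing front. A monotone front is by our convention strictly decreasing on all of $\R$, hence in particular initially decreasing; since $\phi_+$ is initially strictly increasing by Lemma~\ref{lem:W^u}, the monotone front must be $\phi_-$, which proves the first assertion.

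For the second assertion I would assume (G2) and show that $\phi_+$ is not a front by proving $\phi_+(\infty)\neq 0$. The key preliminary step is to produce a point $x_0$ at which Lemma~\ref{lem:G21} applies. By Lemma~\ref{lem:W^u} there is an $x_0'$ with $\phi_+(x)>1$ for all $x\le x_0'$, and $\phi_+(x)\to 1$ as $x\to-\infty$; consequently $\sup_{x\le x_0'}\phi_+(x)>1$, and the decay of $\phi_+$ to $1$ at $-\infty$ lets one replace the half-line by a compact subinterval, so the supremum is attained at some $x_0\le x_0'$. Then $[x_0-r,x_0]\subseteq(-\infty,x_0']$, so $1<\phi_+(x_0+s)\le\phi_+(x_0)$ for all $s\in[-r,0]$; that is, the segment $(\phi_+)_{x_0}\in C$ satisfies the hypothesis of Lemma~\ref{lem:G21}.

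Applying Lemma~\ref{lem:G21} to the solution of \eqref{eq:wpe} with initial datum $(\phi_+)_{x_0}$ then shows that $\phi_+$ is strictly increasing on $(x_0,\infty)$, so $\phi_+(\infty)\ge\phi_+(x_0)>1$; in particular $\phi_+$ cannot satisfy \eqref{eq:posbc}, and $\phi_-$ is therefore the unique front. The only step needing a moment's care is the existence of the base point $x_0$ — a short compactness argument exploiting the $-\infty$ limit of $\phi_+$ — and I do not anticipate any real obstacle beyond that, since everything else is a direct appeal to Lemmas~\ref{lem:W^u} and~\ref{lem:G21} and Proposition~\ref{prop:exist}.
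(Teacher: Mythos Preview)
Your proof is correct and follows essentially the same approach as the paper: identify the two candidates via Lemma~\ref{lem:W^u}, invoke Proposition~\ref{prop:exist} for $\phi_-$, and under (G2) find a point $x_0$ where $(\phi_+)_{x_0}$ satisfies the hypothesis of Lemma~\ref{lem:G21} to force $\phi_+$ to increase forever. One small wording issue: Lemma~\ref{lem:W^u} asserts that $\phi_+(x)>1$ for all sufficiently negative $x$, not that $\phi_+$ is ``initially strictly increasing''; but the former is exactly what you need (a strictly decreasing front with $\phi(-\infty)=1$ must stay below $1$), so the argument is unaffected.
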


Let us write 
\[
c_m = \inf \{ \bar{c} > 0 \ : \ \mbox{every initially decreasing front for $c > \bar{c}$ is monotone} \ \}
\]
(this is the $c_m$ of the theorem.)  

We now show that $\mathcal{M}$ is closed and contains $[c(\beta),\infty)$ as in statement 3 of Theorem $\ref{thm:main}$.
\begin{proposition} $\mathcal{M}$ is closed and contains $[c(\beta),\infty)$.  In particular, when $c = c_m$, the solution $\phi_-$ described in Proposition $\ref{lem:W^u}$ is a monotone front.  \label{prop:lim}
\end{proposition}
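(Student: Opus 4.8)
The plan is to establish the two assertions of the proposition separately: first that $[c(\beta),\infty) \subseteq \mathcal{M}$, and then that $\mathcal{M}$ is closed. The containment $[c(\beta),\infty) \subseteq \mathcal{M}$ is almost immediate from what precedes: Proposition~\ref{prop:exist} shows that for $c > c(\beta)$ the solution $\phi_-$ is a strictly decreasing front, so it remains only to handle the endpoint $c = c(\beta)$, and this will follow from the closedness argument (or by a direct limiting argument along $c \downarrow c(\beta)$). So the real content is closedness of $\mathcal{M}$ and, as a byproduct, the claim that at $c = c_m$ the solution $\phi_-$ is a monotone front.

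For closedness, I would take a sequence $c_k \in \mathcal{M}$ with $c_k \to c_0$, and let $\phi_k = \phi_-^{(c_k)}$ be the corresponding strictly decreasing fronts, normalized (using translation invariance) by a convenient condition such as $\phi_k(0) = 1/2$. Each $\phi_k$ takes values in $(0,1)$, hence is bounded by $M = 1$, so Proposition~\ref{prop:uccs} applies: along a subsequence, $\phi_k \to \phi_0$ uniformly on compact sets, where $\phi_0$ solves $\eqref{eq:wpe}$ with speed $c_0$. The limit $\phi_0$ is non-increasing (a uniform limit of decreasing functions) and takes values in $[0,1]$, with $\phi_0(0) = 1/2$, so it is nonconstant. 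Now I must (i) upgrade $\phi_0$ to a genuine front, i.e. verify the boundary conditions $\eqref{eq:negbc}$ and $\eqref{eq:posbc}$, and (ii) verify it is \emph{strictly} decreasing, so that $c_0 \in \mathcal{M}$.

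For the boundary conditions: since $\phi_0$ is monotone and bounded, the limits $\phi_0(\pm\infty)$ exist; since $\phi_0'(\pm\infty) = 0$ (the limits of a bounded monotone $C^1$ function whose derivative is controlled by $\eqref{eq:wpe}$), both limits are zeros of $g$ on the diagonal, hence lie in $\{0,1\}$ by (G1.1), so $\phi_0(-\infty) = 1$ and $\phi_0(\infty) = 0$ — the value $1/2$ in between forces the nondegenerate choice. Thus $\phi_0$ is a front, and in particular $c_0 \in \mathcal{U}$; moreover $\phi_0$ must then \emph{be} the solution $\phi_-$ for speed $c_0$, since by Lemma~\ref{lem:W^u} and Lemma~\ref{lem:decrease} (applied wherever $\phi_0 \in (0,1)$, which is everywhere here) $\phi_0$ is initially decreasing and hence coincides with $\phi_-^{(c_0)}$ up to translation. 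For strict monotonicity: Lemma~\ref{lem:decrease} already gives $\phi_0'(x) < 0$ at every $x$ where $\phi_0(x) \in (0,1)$, which is every $x$, since a monotone front connecting $1$ to $0$ cannot attain the values $0$ or $1$ at any finite point (if $\phi_0(x_*) = 1$ then $\phi_0 \equiv 1$ on $(-\infty,x_*]$ by backward uniqueness, contradicting $\phi_0(0) = 1/2$; similarly at $0$). Hence $\phi_0$ is strictly decreasing, so $c_0 \in \mathcal{M}$, proving $\mathcal{M}$ closed. Applying this with the sequence realizing $c_m$ as an infimum of $\mathcal{M}$ (which is nonempty and contains the ray $[c(\beta),\infty)$) shows $c_m \in \mathcal{M}$, i.e. $\phi_-^{(c_m)}$ is a monotone front.

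The main obstacle is the passage to the limit in step~(i)–(ii): the limit profile $\phi_0$ could a priori fail to be a front even though every $\phi_k$ is one, because monotonicity could degenerate ($\phi_0$ constant) or a ``plateau'' could develop. The normalization $\phi_k(0) = 1/2$ is precisely what rules out $\phi_0 \equiv \text{const}$, and Lemma~\ref{lem:decrease} is precisely what rules out interior plateaus; so the argument hinges on invoking those two ingredients in the right order. One subtlety worth a careful sentence is why $\phi_0'(\pm\infty) = 0$: this is standard for bounded monotone $C^1$ functions, but one should note $\phi_0'$ is itself continuous and bounded (via $\eqref{eq:wpe}$ and boundedness of $\phi_0$), so if $\phi_0' \not\to 0$ the monotone $\phi_0$ could not have a finite limit.
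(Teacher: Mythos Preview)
Your proposal is correct and follows essentially the same route as the paper's proof: both take a sequence $c_k \in \mathcal{M}$ with $c_k \to c_0$, normalize the corresponding monotone fronts by $\phi_k(0)=1/2$, invoke Proposition~\ref{prop:uccs} to obtain a limiting solution $\phi_0$, use monotonicity plus (G1.1) to force $\phi_0(-\infty)=1$ and $\phi_0(\infty)=0$, and then identify $\phi_0$ with $\phi_-$. If anything, your write-up is slightly more careful than the paper's --- you explicitly invoke Lemma~\ref{lem:decrease} to upgrade non-increasing to strictly decreasing and you argue why $\phi_0$ cannot attain $0$ or $1$ at a finite point, whereas the paper's proof leaves these points implicit.
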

\begin{proof}
Let $c_k$ be a sequence of points in $\mathcal{M}$ with limit $c_0$ and let $\phi_k$ denote the corresponding monotone fronts.  Let us translate each $\phi_k$ so that $\phi_k(0) = 1/2$.  We know from Proposition \ref{prop:uccs} that $\phi_k \to \phi_0$, where the convergence is uniform on compact subsets and $\phi_0$ is a solution of \eqref{eq:wpe}.  Since each $\phi_k$ is monotone, $\phi_0$ is monotone as well and hence has limits at $x = \pm \infty$; additionally the limits $\phi_0'(\pm \infty)$ exist and are zero.  Thus $\phi_0(\pm \infty) = g(\Phi_0(\pm \infty))$.  The pinning $\phi_k(0) = 1/2$ yields that $\phi_0(-\infty) \in [1/2,1]$ and $\phi_0(\infty) \in [0,1/2]$.  It now follows from (G1.1) that $\phi_0(-\infty) = 1$ and $\phi_0(\infty) = 0$.  Since $\phi_0(-\infty) = 1$ and $\phi_0(x)$ is never greater than one, it follows that $\phi_0 = \phi_-$ and that (up to translation) $\phi_0$ is the unique decreasing front.  
\end{proof}

We now complete the proof of statement 3 in Theorem $\ref{thm:main}$ by showing that $\mathcal{M}$ does not intersect $[b(\nabla g({\bf 0})),c(\nabla g({\bf 0})))$.

\begin{proposition} \label{prop:lowerlimit}
Let $\phi_-$ denote the initially decreasing branch of $W^u(1)$, and assume that $\phi_-$ is a decreasing front.  Then $c \geq c(\nabla g({\bf 0}))$.  In particular, $c_m \geq c(\nabla g({\bf 0}))$.  
\end{proposition}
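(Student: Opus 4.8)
The plan is to argue by contradiction: suppose $\phi_-$ is a decreasing front but $c < c(\nabla g({\bf 0}))$, and derive a contradiction from the behavior of $\phi_-$ near $x = +\infty$, where $\phi_-$ approaches the equilibrium $0$. Since $\phi_-$ is a decreasing front, it satisfies $\eqref{eq:posbc}$, so $\phi_-(x) \to 0$ as $x \to \infty$ and $\phi_-(x) > 0$ for all $x$. The idea is that the decay of $\phi_-$ toward zero is governed by the linearization of $\eqref{eq:wpe}$ at $0$, whose characteristic function is $D(\lambda; c, \nabla g({\bf 0}))$; by part (4) of Proposition $\ref{lem:cdef}$, when $c < c(\nabla g({\bf 0}))$ this characteristic function has \emph{no real roots}, so its rightmost roots are a genuinely complex conjugate pair. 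A solution decaying to $0$ along such a spectral direction must oscillate about $0$, contradicting positivity (equivalently, monotonicity) of $\phi_-$.

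The key steps, in order, would be: (1) Fix $c < c(\nabla g({\bf 0}))$ and recall from Proposition $\ref{lem:cdef}$(2)--(4) that, since $\sum_i \partial g/\partial s_i({\bf 0}) > 0$ (from (G1.2)) and $c < c(\alpha)$ with $\alpha = \nabla g({\bf 0})$, the function $D(\cdot; c, \alpha)$ has no real roots; hence the root(s) of largest real part form a complex conjugate pair $\mu \pm i\nu$ with $\nu \neq 0$, and all other roots have strictly smaller real part. (2) Observe that $\phi_- - 0 = \phi_-$ is a bounded solution of $\eqref{eq:wpe}$ tending to the hyperbolic (or at least: no real root on the relevant part of the spectrum) equilibrium $0$; invoke the standard asymptotic theory for linear autonomous delay equations (e.g.\ the results in Chapter 7 of \cite{hale:1993}, or a direct Laplace-transform / spectral-projection argument applied to the equation for $\phi_-$ linearized about $0$ plus the nonlinear remainder) to conclude that $\phi_-(x) = e^{\mu x}\big(a\cos(\nu x) + b\sin(\nu x)\big) + o(e^{\mu x})$ as $x \to \infty$, with $(a,b) \neq (0,0)$ unless $\phi_-$ decays faster, i.e.\ along a strictly more negative exponential rate; in the latter case one repeats the argument with the next eigenvalue, which (by the same no-real-root argument, since every root has nonzero imaginary part once there are no real roots) is again complex. (3) From this asymptotic expansion, $\phi_-$ changes sign infinitely often as $x \to \infty$, or at the very least fails to be eventually positive — contradicting that $\phi_-$ is a decreasing front connecting $1$ to $0$, which forces $\phi_-(x) \in (0,1)$ for all $x$ (by Lemma $\ref{lem:decrease}$ and monotonicity). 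This contradiction gives $c \geq c(\nabla g({\bf 0}))$. The final sentence $c_m \geq c(\nabla g({\bf 0}))$ then follows because, by Proposition $\ref{prop:lim}$, at $c = c_m$ the solution $\phi_-$ is a monotone (hence decreasing) front, so the inequality just proved applies with $c = c_m$.

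The main obstacle I anticipate is making step (2) rigorous: one must justify that a nonlinear solution decaying to the equilibrium actually "sees" the leading complex eigenvalue of the linearization, rather than decaying faster than every eigenmode (which, for a genuine solution on the unstable-manifold side, cannot happen, but needs the linear variation-of-constants formula together with a contraction/fixed-point or a careful spectral-projection estimate to exclude). The cleanest route is probably to linearize $\eqref{eq:wpe}$ about $0$ along $\phi_-$, write $\phi_-' = A_c \phi_- + N(\phi_-)$ where $N(\phi_-) = o(\|\phi_-\|)$, and apply the standard result that a bounded solution of such an equation on $[x_*, \infty)$ either decays exponentially and its exponential rate and oscillation are dictated by a root of $D(\cdot; c, \alpha)$, or is identically $0$; since no root is real, any nonzero tail oscillates, contradicting monotonicity. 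If the paper prefers to avoid invoking such machinery, an alternative is a comparison/positivity argument: define $v(x) = e^{-\mu' x}\phi_-(x)$ for suitable $\mu'$ and show it cannot be eventually monotone and positive while satisfying the delayed linear-plus-small-perturbation equation — but this is essentially the same analytic content repackaged.
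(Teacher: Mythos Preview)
Your approach shares the paper's conceptual core --- when $c < c(\nabla g({\bf 0}))$ the characteristic equation at $0$ has no real roots, so decay to $0$ should force oscillation and contradict monotonicity --- but the route is genuinely different. You propose to establish an asymptotic expansion of the nonlinear solution $\phi_-$ itself along eigenmodes of the linearization, and you correctly flag the main obstacle: ruling out that $\phi_-$ decays faster than every eigenmode (the ``small solutions'' issue), which would make the descent through successive eigenvalues nonterminating. The paper sidesteps exactly this difficulty by a rescaling/compactness trick. It sets $y_n(x) = \phi_-(x+n)/\phi_-(n)$, so that $y_n(0) \equiv 1$, and uses equicontinuity to extract a subsequential limit $y_0$ on $[0,\infty)$. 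Because the remainder after linearizing is $\mathcal{O}(\phi_-(n))$, the limit $y_0$ satisfies the \emph{exact} linear equation $cy_0' = -\nabla g({\bf 0}) \cdot Y_0$; it is non-increasing with values in $[0,1]$; and the normalization forces $y_0(0) = 1$, so $y_0 \not\equiv 0$. The eigenfunction expansion (via \cite{henry:1970}, \cite{diekmann:1995}) is then applied to this genuinely linear, provably nonzero solution, where the theory is clean: the leading term must correspond to a real negative root because $y_0$ is non-increasing. What the paper's approach buys is precisely the elimination of the super-exponential-decay worry you identified --- nontriviality of the limit comes for free from $y_n(0) = 1$ rather than from any a priori control on the decay rate of $\phi_-$. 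Your approach can be made rigorous by invoking nonlinear asymptotic theory on the stable manifold, but that is heavier machinery than the paper needs.
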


The ideas in the following proof are similar to those in sections $7$ and $10$ of \cite{mallet-paret:1988}.  We rely on the fact that solutions of the linearized equation \eqref{eq:lwpe} have expansions in eigenfunctions (see, for example, section V.3 of \cite{diekmann:1995}, and \cite{henry:1970}).  

\begin{proof}
Let $y_n(x) := \frac{\phi_-(x+n)}{\phi_-(n)}$.  We shall write 
\begin{eqnarray*}
& & \Phi_-(x) = (\phi_-(x),\phi_-(x-\kappa_1),\ldots,\phi_-(x - \kappa_N)); \\
& & Y_n(x) = (y_n(x),y_n(x-\kappa_1),\ldots,y_n(x - \kappa_N)).
\end{eqnarray*}

Observe that each function $y_n$ satisfies the delay equation
\begin{eqnarray*}
y_n'(x) & = & \frac{1}{c} \left[ - \frac{1}{\phi_-(n)}g(\phi_-(n)Y_n(x)) \right] \\
& = & \frac{1}{c} \left[- \nabla g({\bf 0}) \cdot Y_n(x) + \delta_n(x)\right] 
\end{eqnarray*}
where $\delta_n(x) =  \nabla g({\bf 0}) \cdot Y_n(x) - \frac{1}{\phi_-(n)}g(\phi_-(n)Y_n(x))$.

Since $\phi_-$ is hypothesized to be a decreasing front, we have $y_n(x) \in [0,1]$ for all $x \geq 0$.  Note also that, for $n$ sufficiently large, $y_n'(x)$ is approximated by 
\[
\frac{- \nabla g({\bf 0}) \cdot Y_n(x+n)}{c},
\]
and so the $y_n$ are an equicontinuous family for $x \geq 0$.  Therefore some subsequence of $\{y_n\}$ converges uniformly on compact subsets of $[0,\infty)$ to some $y_0$.  The remainder term $\delta_n$ satisfies $\delta_n = \mathcal{O}(\phi_-(n))$ on the half-line $x \in[0,\infty)$.  Thus $y_0$ 
satisfies the linear constant-coefficient delay equation
\be 
cy_0'(x) = - \nabla g({\bf 0}) \cdot Y_0(x) \qquad x \ge 0, \label{eq:ystar}
\ee
which is simply equation \eqref{eq:lwpe} restricted to a half-line.    

The solution $y_0$ has an expansion in eigenfunctions.  Looking at the real and imaginary parts of the equation $D(\lambda; c, \nabla g({\bf 0})) = 0$ (recall the proof of Proposition \ref{lem:cdef}) shows that, if $x_0 + iy_0$ is a nonreal root of $D$, the only roots of $D$ with real part $x_0$ are $x_0 + iy_0$ and $x - iy_0$.  Moreover, since $D$ is entire its roots are isolated.  Thus there are at most two dominant terms in the expansion of $y_0$; these are either oscillatory (if the corresponding roots are nonreal) or monotone (if the corresponding root is real).   

Since each $y_n$ is decreasing, $y_0$ is non-increasing.  There are now two possibilities.  Either $y_0$ is identically zero, or the root corresponding to the leading term in the expansion of $y_0$ is negative and real.  But $y_n(0) \equiv 1$ for all $n$, so $y_0(0) = 1$ and $y_0$ cannot be identically zero.  This completes the proof.
\end{proof}

We have proven parts 2 and 3 of Theorem \ref{thm:main}.  

\section{$\mathcal{U}$ is open}

In this section we complete the proof of Theorem $\ref{thm:main}$ by proving statement 4.  In particular, we show that if $c \in \mathcal{U}$ and $c > b(\nabla g({\bf 0}))$, then $\mathcal{U}$ contains an open interval in $\R$ about $c$.  We use a continuation argument.  We assume that (G1) holds throughout, and in addition assume that $g$ is $C^1$.  

We write $W^{1,\infty} = W^{1,\infty}(\R)$ for the Banach space of Lipschitz functions on $\R$, equipped with the norm 
\[
\|z\| = \mathrm{esssup}_{\R} |z| + \mathrm{esssup}_{\R} |z'|,
\]
and $L^\infty = L^\infty(\R)$ for the Banach space of essentially bounded functions equipped with the norm
\[
\|z\| = \mathrm{esssup}_{\R} |z|.  
\]
Given $\nu \in \R$, we write $W^{1,\infty}_{\nu} \subset W^{1,\infty}$ for the codimension-one affine subspace of functions $z$ satisfying $z(0) = \nu$.   Given $z$, as we done have earlier we shall write 
\[
Z(x) = \left(z(x), z(x - \kappa_1),\ldots,z(x - \kappa_N) \right).  
\]
We define the following function $F: W^{1,\infty}(\R) \times \R_+ \to L^\infty$:
\be 
F(z,c)(x) := cz'(x) + g(Z(x)). \label{eq:Fdef}
\ee
Note that zeros of $F$ are solutions of $\eqref{eq:wpe}$ with speed $c$ that are defined on all of $\R$.  

Suppose that $\nu \in (0,1)$ and that $\phi \in W^{1,\infty}_{\nu}$ is an initially decreasing front of speed $c_0$.  Below we linearize $F$ about $(\phi,c_0)$ and show that the resulting operator is an isomorphism between $W^{1,\infty}_0$ and $L^\infty$.  $W^{1,\infty}_0$ is the tangent space to $W^{1,\infty}_{\nu}$; therefore by the Implicit Function Theorem $F(\cdot,c)$ has a zero $\phi_c$ in $W^{1,\infty}_{\nu}$ for each $c$ sufficiently close to $c_0$.  This $\phi_c$ is a solution of \eqref{eq:wpe} with speed $c$ that is defined for all $x \in \R$, is translated so that $\phi_c(0) = \nu$, and is close to $\phi$ in the $W^{1,\infty}$ norm.  Proposition $\ref{prop:bdycond}$ establishes that $\phi_c$ satisfies the boundary conditions $\phi_c(-\infty) = 1$ and $\phi_c(\infty) = 0$ --- and so $\phi_c$ is a front.  Thus we see that fronts connecting $1$ to $0$ (no longer necessarily monotone) exist for an open subset of wave speeds --- in particular, for some wave speeds $c < c_m$ (and so non-monotone fronts connecting $1$ to $0$ must exist).

The derivative of $F$ with respect to its functional coordinate is given by
\[ 
D_1 F(z,c)\bar{z} = c\bar{z}'(x) + \nabla g(Z(x)) \cdot \bar{Z}(x). 
\]
Observe that the map $(z,c) \mapsto D_1 F(z,c) \in \mathcal{L}(W^{1,\infty},L^\infty)$ (where the latter space is endowed with the usual operator norm) is continuous.  (This is where we use the fact that $g$ is $C^1$, rather than $C^1$ only near ${\bf 0}$ and ${\bf 1}$.)  Therefore, if we establish that $D_1F(\phi,c_0)$ is an isomorphism from $W^{1,\infty}_0$ to $L^\infty$, the Implicit Function Theorem applies.  

\begin{proposition} \label{prop:iso}
Let $c_0 > b := b(\nabla g({\bf 0}))$ and suppose that there is some initially decreasing $\phi \in W^{1,\infty}_\nu$(not necessarily monotone) that satisfies $F(\phi,c_0) = 0$ and the boundary conditions $\phi(-\infty) = 1$ and $\phi(\infty) = 0$.  Suppose further that $\nu$ has been chosen so that, whenever $x < 0$, $\phi(x) \in (\nu,1)$, $\frac{\partial g}{\partial s_0}(\Phi(x)) < 0$, $\sum_{i=0}^N \frac{\partial g}{\partial s_i}(\Phi(x)) < 0$, and 
\[
\nabla g(\Phi(x)) \cdot (s_0,\ldots,s_N) < 0 \ \mbox{when} \ 0 \leq s_i \leq s_0 \ \mbox{for all} \ 1 \leq i \leq N.  
\].  

Then $L := D_1 F(\phi,c_0)$ is an isomorphism from $W^{1,\infty}_0$ to $L^\infty$.
\end{proposition}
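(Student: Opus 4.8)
The plan is to show that $L = D_1 F(\phi, c_0) : W^{1,\infty}_0 \to L^\infty$ is both injective and surjective, with the inverse automatically bounded by the open mapping theorem (or by a direct estimate). Explicitly, $L\bar z(x) = c_0 \bar z'(x) + \nabla g(\Phi(x)) \cdot \bar Z(x)$, so solving $L\bar z = w$ amounts to solving a \emph{linear non-autonomous delay equation}
\[
c_0 \bar z'(x) = -\nabla g(\Phi(x)) \cdot \bar Z(x) + w(x), \qquad \bar z(0) = 0,
\]
on the whole line. The coefficients $\nabla g(\Phi(x))$ are bounded (since $\phi$ is bounded and $g$ is $C^1$) and, because $\phi(-\infty) = 1$ and $\phi(\infty) = 0$, they converge as $x \to \pm\infty$ to $\nabla g({\bf 1})$ and $\nabla g({\bf 0})$ respectively. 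Thus the equation is asymptotically autonomous at both ends, and the relevant limiting characteristic equations are $D(\lambda; c_0, \nabla g({\bf 1})) = 0$ and $D(\lambda; c_0, \nabla g({\bf 0})) = 0$.

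First I would analyze the two limiting equations using the results of Proposition~\ref{lem:cdef} and the hypotheses. At $+\infty$: since $c_0 > b(\nabla g({\bf 0}))$, every root of $D(\lambda; c_0, \nabla g({\bf 0}))$ has negative real part, so the linearization at $0$ is a uniform exponential contraction in forward time — this is exactly the content of the $L_c$/$T_c$ estimates set up at the end of Section~2. At $-\infty$: by (G1.2), $\sum_i \partial g/\partial s_i({\bf 1}) < 0$, so $D(\lambda; c_0, \nabla g({\bf 1}))$ has exactly one positive real root $\lambda_1 > 0$ and all other roots have negative real part (this is what gives the one-dimensional unstable manifold at $1$, already used in Section~2). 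So the limiting problem at $-\infty$ has a one-dimensional unstable space and a codimension-one stable space. The heuristic Fredholm count is then: the space of bounded solutions of the homogeneous equation $L\bar z = 0$ on $\R$ is the intersection of "decays at $+\infty$" (codimension~$0$ constraint, since everything decays forward) with "decays at $-\infty$" (one-dimensional, the unstable direction at $1$), giving a one-dimensional kernel of $L$ on $W^{1,\infty}$; intersecting with $\bar z(0) = 0$ cuts this down, generically, to $\{0\}$. Dually, $L$ should be onto $L^\infty$. Making this rigorous is the substance of the proof.

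For \textbf{injectivity}, suppose $L\bar z = 0$ with $\bar z \in W^{1,\infty}_0$, so $\bar z$ is a bounded solution on $\R$ of the homogeneous variational equation with $\bar z(0) = 0$. I would argue that, up to scaling, the only bounded solution on all of $\R$ is $\bar\phi := \phi'$ itself — indeed $\phi'$ solves $L\bar z = 0$ by differentiating $F(\phi,c_0)=0$, it is bounded, and (this is where the sign hypotheses on $\nabla g(\Phi(x))$ for $x<0$ enter) it is the unique bounded solution up to scalar multiple. The uniqueness comes from the structure at $-\infty$: any bounded solution must lie in the one-dimensional unstable space of the $x\to-\infty$ limiting equation, and the extra monotonicity/sign conditions imposed in the hypothesis force any bounded homogeneous solution to be eventually one-signed and comparable to $\phi'$ near $-\infty$, so two such solutions differ by a multiple that kills one of them. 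Then, since $\phi$ is initially decreasing, $\phi'(x) < 0$ for $x$ slightly negative, and in particular $\phi'(0) \ne 0$ (or one uses that $\phi'$ does not vanish at $0$ after the normalization of $\nu$); hence the constraint $\bar z(0) = 0$ forces the scalar to be $0$, i.e. $\bar z \equiv 0$. A cleaner route, which I would try to use to avoid sign subtleties: show $\bar z$ decays exponentially at $+\infty$ (from $c_0 > b(\nabla g({\bf 0}))$) and at $-\infty$ it lies in the stable subspace of the $1$-linearization \emph{or} is a multiple of the unstable eigensolution; a bounded solution that is in the stable subspace at $-\infty$ would have to be $0$ by uniqueness of backward solutions, and the unstable eigensolution is $\sim k e^{\lambda_1 x}$ which is one-signed, so again $\bar z(0)=0$ forces $\bar z = 0$.

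For \textbf{surjectivity}, given $w \in L^\infty$ I would construct a bounded solution of $c_0\bar z' = -\nabla g(\Phi(x))\cdot \bar Z(x) + w$ by a variation-of-parameters / fixed-point argument using the exponential dichotomies of the two limiting equations: solve forward from a large time $x = R$ using the contraction at $+\infty$ (the semigroup $T_{c_0}(t)$ decays, giving a bounded particular solution on $[R,\infty)$), solve on the bounded interval $[-R,R]$ by ODE theory, and patch at $-\infty$ by projecting onto the stable subspace of the $1$-linearization; the one-dimensional unstable direction at $-\infty$ is the cokernel obstruction, and one spends it together with the freedom in the integration constant to meet $\bar z(0) = 0$. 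Concretely, I expect to set up the Green's function for the asymptotically-hyperbolic equation and verify it maps $L^\infty \to W^{1,\infty}$ boundedly; the bound on $\bar z'$ needed for the $W^{1,\infty}$ norm then comes directly from the equation, $\bar z' = (-\nabla g(\Phi)\cdot\bar Z + w)/c_0$.

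The \textbf{main obstacle} is the surjectivity/dichotomy bookkeeping on the whole line: one must genuinely show that "all roots at $0$ have negative real part" plus "exactly one positive root at $1$" is enough to make the variational operator Fredholm of index $0$ once restricted by $\bar z(0)=\nu$, and then identify kernel and cokernel. The sign hypotheses on $\nabla g(\Phi(x))$ for $x < 0$ built into the proposition statement are there precisely to control the intermediate (non-asymptotic) range $x \in [-R, 0]$ — they guarantee a comparison/positivity principle so that no unexpected bounded homogeneous solution appears in the transition region — and weaving those conditions into the dichotomy argument, rather than just the two endpoint linearizations, is where the real care is needed.
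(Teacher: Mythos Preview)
Your overall strategy is sound, and the structural analysis of the two endpoint linearizations is exactly right. The paper's proof, however, differs from yours in two significant ways.

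First, and most importantly, the paper does \emph{not} construct surjectivity by hand. Instead it invokes Theorem~A of Mallet-Paret \cite{mallet-paret:1999b}: since the characteristic equation at $0$ has no roots in the closed right half-plane (because $c_0 > b(\nabla g({\bf 0}))$) and the characteristic equation at $1$ has exactly one such root, $L$ is Fredholm of index~$1$ as an operator $W^{1,\infty} \to L^\infty$. Restricting to the codimension-one subspace $W^{1,\infty}_0$ gives index~$0$, so it suffices to show the kernel is trivial. This replaces your entire variation-of-parameters/patching program with a single citation; your direct approach would work in principle, but it is substantially more labor and the dichotomy bookkeeping you flag as the ``main obstacle'' is precisely what the cited theorem handles.

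Second, for injectivity the paper does not identify the kernel with $\mathrm{span}(\phi')$ or rely on $\phi'(0) \neq 0$. Instead it argues directly that any nonzero $w$ with $Lw = 0$ satisfies $w(0) \neq 0$: one first shows $w(-\infty) = 0$ (an elementary argument using $\sum_i \partial g/\partial s_i({\bf 1}) < 0$), then that $w$ is initially of one sign (as in Lemma~\ref{lem:W^u}, since $w$ lies on the one-dimensional unstable manifold of the linear equation). Taking $w > 0$ for large negative $x$, one finds $x_0 < 0$ with $w(x_0) \geq w(x_0 - \tau)$ for all $\tau \in [0,r]$; the sign hypothesis $\nabla g(\Phi(x)) \cdot (s_0,\ldots,s_N) < 0$ when $0 \leq s_i \leq s_0$ then forces $w'(x_0) > 0$, and this monotonicity persists past $x = 0$. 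Hence $w(0) > 0$. Your sketch gestures at this in the ``cleaner route'' but does not quite close the gap between ``one-signed near $-\infty$'' and ``nonzero at $0$'' --- that is exactly what the sign hypothesis on $\nabla g(\Phi(x))$ for $x < 0$ is doing, and the paper uses it explicitly in this monotonicity step rather than in a comparison to $\phi'$.
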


\begin{remark}
In light of (G1.2) and the fact that $g$ is $C^1$, the conditions on $\nabla g(\Phi(x))$ for $x < 0$ can be imposed without loss of generality by choosing $\nu$ appropriately. 
\end{remark}

\begin{proof}
Since $c_0 > b$ the characteristic equation of $\eqref{eq:wpe}$ at 0 has no roots in the closed right half plane.  Since $c_0 > 0$, the characteristic equation at 1 has one root in the right half-plane.  Thus, it follows from Theorem A in \cite{mallet-paret:1999b} that the Fredholm index of $L$ is $1$ when $L$ is regarded as an operator from $W^{1,\infty}$ to $L^\infty$.  Since $W^{1,\infty}_0$ is codimension one in $W^{1,\infty}$, the Fredholm index of $L$ is zero when regarded as an operator from $W^{1,\infty}_0$ to $L^\infty$.  Thus to show that $L$ is an isomorphism it suffices to show that its kernel in $W^{1,\infty}_0$ is $\{0\}$.  Suppose that $w \in W^{1,\infty} \setminus \{0\}$ and that $Lw = 0$.  We will show that $w \not \in W^{1,\infty}_0$.  

The expression $Lw = 0$ rewrites as
\be
cw'(x) = - \nabla g(\Phi(x)) \cdot W(x) \label{eq:lin}, \ \ W(x) = (w(x),w(x-\kappa_1), \ldots, w(x-\kappa_N)).  
\ee
We claim that $w$ satisfies the boundary condition $w(-\infty) = 0$.   In fact this claim follows from the theory of exponential dichotomies, but for completeness we provide an elementary proof here.  There are two cases to consider.  Either $w$ is initially monotone, or it isn't.  If $w$ is initially monotone, then the limits $w(-\infty)$ and $w'(-\infty)$ exist and $w'(-\infty) = 0$.  Thus $0 = \nabla g({\bf 1})\cdot(w(-\infty),\cdots w(-\infty))$.  Since the sum of the entries of $\nabla g({\bf 1})$ is negative, we have $w(-\infty) = 0$ as desired.  We now consider the case where $w$ is not initially monotone.  There is a sequence $x_k \to -\infty$ with $w'(x_k) = 0$.  Choose a subsequence so that $w(x_k)$ and $w(x_k-\kappa_i)$ simultaneously converge, for all $i$, to limits $w_* = \limsup_{x \to -\infty} w(x)$ and $w_i$.  Then, letting $k \to \infty$ in the expression $0 = \nabla g(\Phi(x_k)) \cdot W(x_k)$, we obtain 
\[
0 = \nabla g({\bf 1}) \cdot (w_*,w_1, \ldots, w_N).
\]
Imagine that $w_* > 0$.  Then 
\[
\frac{\del g({\bf 1})}{\del s_i} w_i \leq \frac{\del g({\bf 1})}{\del s_i} w_* 
\]
for all $i \in \{1,\ldots,N\}$, whence 
\[
0 \le w_* \sum_{i=0}^N \frac{\partial g}{\partial u_i}({\bf 1}),
\]
a contradiction.  The case $w_* < 0$ is ruled out similarly.  Thus the limit exists and the claim is proven.

Since $\lim_{x \to -\infty} w(x) = 0$, the solution $w$ lies on the unstable manifold at $0$ for the linear equation $\eqref{eq:lin}$.  Thus, from an argument similar to the proof of Lemma $\ref{lem:W^u}$ except with $\langle \nabla g(\Phi(x)), \cdot \rangle$ replacing $g(\cdot)$, we know that $w$ is initially strictly of one sign.  Without loss of generality we take $w(x) > 0$ for all sufficiently negative $x$.  Choose $x_0 < 0$ such that $w(x) > 0$ for all $x \leq x_0$ and also such that 
\[
w(x_0) \geq w(x_0 - \tau), \ \tau \in [0,r]  
\]
(such an $x_0$ exists because $w(-\infty) = 0$).  Our hypotheses on $\nabla g(\Phi(x))$ for $x < 0$ now yield that $w'(x_0) > 0$. 

Now let 
\[
x_* := \sup\{ x \in \R \: | \; w'(y) > 0 \mbox{ for all } y \in [x_0,x) \}.
\]
If $x_*$ is infinite, then $w \not \in W^{1,\infty}_0$ and we are done.  Suppose that $x_*$ is finite.
Then $w'(x_*) = 0$ so $\nabla g(\Phi(x_*)) \cdot W(x_*) = 0$.  Since $w(x)$ is strictly increasing on $[x_0,x_*]$, by our hypothesis $w'(x) > 0$ on $[x_0,x_*) \cap (-\infty,0]$.  It follows that $x_* > 0$.  Thus in particular $w(x) > 0$ for $x \le 0$, and $w \not \in W^{1,\infty}_0$.  This completes the proof.
\end{proof}

We now complete the proof that $\mathcal{U}$ is open.  Suppose that $c_0 \in \mathcal{U}$, with a corresponding front $\phi$.  Apply the Implicit Function Theorem with $c$ as a parameter: for each $\epsilon > 0$ there is a $\delta = \delta(\epsilon) > 0$ such that whenever $|c - c_0| < \delta$ then there is a $\phi_c$ which solves $\eqref{eq:wpe}$ and which satisfies $\| \phi_c - \phi_0 \| < \epsilon$.   Moreover, since $\phi_c$ is continuous in $c$ we have $\|\phi_c - \phi_0 \| \to 0$ as $\delta \to 0$.  

It remains to check that $\phi_c$ connects $1$ to $0$ if $\delta$ is small enough.  Given any $\epsilon$, by choosing $c$ close enough to $c_0$ we can arrange for $\|\phi_c - \phi\| < \epsilon/2$ in the $W^{1,\infty}$ norm, and so also in the sup norm.  By shifting $\phi_c$ and $\phi$ (identically) so that $|\phi(x) - 1| < \epsilon/2$ for $x < 0$, we guarantee that $|\phi_c (x) - 1| < \epsilon$ for all $x < 0$.  Similarly, we can shift $\phi_c$ so that $|\phi(x)| < \epsilon$ for all $x > 0$.  That this is enough to guarantee $\phi_c (-\infty) = 1$ and $\phi_c(\infty) = 0$ follows from the following proposition.

\begin{proposition} \label{prop:bdycond}
For each $c_0 > b(\nabla g({\bf 0}))$, there is an $\eps > 0$ such that the following hold.  
\begin{itemize}
\item[1)] Suppose that $|c - c_0| < \eps$ and that $\psi_0 \in C$ has a global backward continuation for $\eqref{eq:wpe}$, with wave speed $c$, such that $\| \psi_{\tau} - 1 \| < \eps$ for $\tau < 0$.  Then the limit $\lim_{t \to -\infty} \psi(t)$ exists and is equal to $1$.
\item[2)] Suppose that $|c - c_0| < \eps$ and that $\psi_0 \in C$ has a global forward continuation for $\eqref{eq:wpe}$, with wave speed $c$, such that $\| \psi_{\tau}  \| < \eps$ for $\tau > 0$.  Then the limit $\lim_{t \to \infty} \psi(t)$ exists and is equal to $0$.
\end{itemize}
\end{proposition}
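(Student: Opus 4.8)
The plan is to handle the two assertions separately; part 2) is almost immediate, and essentially all the work is in part 1).

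For part 2) I would invoke Proposition~\ref{prop:unif_basin} directly. Applying it with $c_1 = c_0$ produces a $\delta_0 > 0$ and an $\eps_0 > 0$ such that, whenever $|c-c_0| < \delta_0$, the ball of radius $\eps_0$ about the origin in $C$ lies in the basin of attraction of $0$ for \eqref{eq:wpe}. Take $\eps \le \min(\delta_0,\eps_0)$. If $\|\psi_\tau\| < \eps$ for all $\tau > 0$, then for any fixed $\tau_1 > 0$ the element $\psi_{\tau_1} \in C$ lies in that ball, so $T_c(t)\psi_{\tau_1} \to 0$ in $C$ as $t \to \infty$; hence $\psi(s) \to 0$ as $s \to \infty$, which is the claim.

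For part 1) I would use the hyperbolic structure of \eqref{eq:wpe} at the equilibrium ${\bf 1}$. By (G1.2) together with part (2) of Proposition~\ref{lem:cdef}, for every $c>0$ the characteristic equation at ${\bf 1}$ has a single root $\lambda = \lambda(c) > 0$ in the closed right half plane, all remaining roots having real part $\le -\mu$ for some $\mu = \mu(c) > 0$. Writing $z(t) = \psi(t) - 1$, the function $z$ solves $cz'(t) = -\nabla g({\bf 1})\cdot Z(t) + r(t)$ with $|r(t)| \le \omega(\eps)\|z_t\|$, where $\omega(\eps) \to 0$ as $\eps \to 0$ because $g$ is $C^1$ near ${\bf 1}$ and the backward orbit is confined to $\|z_\tau\| < \eps$. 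Splitting $C$ into the one-dimensional unstable part $P$ (spanned by $e^{\lambda\cdot}$) and the complementary stable part $Q$, and using the variation-of-constants formula in the sun-star framework (as was done around Proposition~\ref{prop:unif_basin}), one controls the stable component $Qz_t$ by sending the initial time to $-\infty$ (its homogeneous contribution vanishes since $z$ is bounded) and controls the unstable component $Pz_t$ by solving the associated scalar ODE from a finite base time $T \le 0$. Writing $N(t) := \sup_{\sigma \le t}\|z_\sigma\|$, these estimates combine to give
\[
N(t) \le \frac{1}{1-\theta_Q}\Bigl(e^{\lambda(t-T)}\|z_T\| + \theta_P\,N(T)\Bigr) \qquad (t \le T \le 0),
\]
where $\theta_P,\theta_Q$ are fixed constant multiples of $\omega(\eps)$, hence less than $1$ once $\eps$ is small. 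Letting $t \to -\infty$ with $T$ fixed, and then letting $T \to -\infty$, turns this into $N_\infty \le \tfrac{\theta_P}{1-\theta_Q}\,N_\infty$ for $N_\infty := \lim_{t\to-\infty}N(t)$, so $N_\infty = 0$; thus $\|z_t\| \to 0$ and $\psi(t) \to 1$ as $t \to -\infty$. (Equivalently, this shows that any backward orbit confined near ${\bf 1}$ must lie on the local unstable manifold $W^u(1,V)$ of Proposition~\ref{prop:umfact}, and I would present whichever route reads more cleanly.)

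Finally, to produce a single $\eps$ valid uniformly for all $c$ with $|c-c_0|<\eps$, I would note that $\lambda(c)$, the gap $\mu(c)$, and the constants appearing in the decay and projection estimates depend continuously on $c$ near $c_0$ --- the argument is of the same elementary flavor as Lemma~\ref{lem:linunifc} --- so all the bounds above hold with uniform constants on a neighborhood of $c_0$; intersecting that neighborhood with $(c_0-\eps_0,c_0+\eps_0)$ and shrinking $\eps$ if needed finishes the proof. The main obstacle is precisely the part 1) estimate: because the unstable component $Pz_t$ is forced by $r$ only over $[T,0]$ rather than over all of $(-\infty,T]$, a one-pass bound merely controls $\limsup_{t\to-\infty}\|z_t\|$ by a fixed multiple of $\sup_\tau\|z_\tau\|$, and one must run the two-step argument (fix $T$, estimate, then let $T\to-\infty$) to extract the self-improving inequality $N_\infty \le \theta N_\infty$. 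Carrying out the variation-of-constants bookkeeping cleanly in the delay setting is where the genuine work lies; everything else is either routine or a direct citation.
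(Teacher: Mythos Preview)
Your argument is correct, but it takes a genuinely different route from the paper's.  The paper proves both parts at once by passing to the extended system
\[
-c\phi'(x) = g(\Phi(x)), \qquad \dot c = 0
\]
on $C \times \R$, noting that the set $\{(1,c) : |c-c_0|<\delta\}$ (respectively $\{(0,c) : |c-c_0|<\delta\}$) is a local center manifold at $(1,c_0)$ (respectively $(0,c_0)$), and then invoking the standard fact (Theorem~X.2.1 in \cite{hale:1993}) that every invariant set in a small neighborhood of the equilibrium is contained in the local center manifold.  The $\alpha$-limit set (respectively $\omega$-limit set) of the given orbit is such an invariant set, and since $c$ is constant along orbits it must reduce to the single point $(1,c)$ (respectively $(0,c)$).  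This gives both conclusions, and the uniformity in $c$, in one stroke.

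Your approach is more hands-on: for part~2) you quote Proposition~\ref{prop:unif_basin} directly, and for part~1) you run an explicit exponential-dichotomy / variation-of-constants estimate and the self-improving inequality $N_\infty \le \theta N_\infty$.  This is a perfectly valid alternative; it avoids appealing to the center-manifold black box and instead reuses machinery already developed in the paper.  The trade-off is that you must track the dichotomy constants and verify their continuity in $c$ by hand (as you note), whereas the extended-system trick gets the $c$-uniformity for free.  Your parenthetical remark that the argument amounts to showing the backward orbit lies on $W^u(1,V)$ is exactly right, and is morally the same conclusion the center-manifold argument reaches by a shorter path.
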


\begin{proof}
Consider the dynamical system 
\[ 
\left\{ \ba{l} -c\phi'(x) = g(\Phi(x)) \\ \dot{c} = 0 \ea \right. 
\]
in the phase space $C \times \R$.  Choose $\delta > 0$ such that the set $W_{loc}^c(1,c_0) = \{ (1,c) \; | \; |c-c_0| < \delta \}$ is a local center manifold of the equilibrium $(1,c_0)$.  

By Theorem X.2.1 in \cite{hale:1993}, we can choose $\eps < \delta$ so small that every invariant set in an $\eps$-neighborhood of $(1,c_0)$ is contained in the local center manifold of $(1,c_0)$.  $(\psi_0,c)$ has a backward orbit that is contained in an $\eps$-neighborhood of $(1,c)$.  Since $\psi(t)$ is $C^1$ and bounded with bounded derivative for all negative $t$, this backward orbit is pre-compact and so has a well-defined $\alpha$-limit set.  This $\alpha$-limit set is invariant and contained in an $\eps$-neighborhood of $(1,c_0)$, and hence is a subset of $W_{loc}^c(1,c_0)$.  Since $c$ is constant, it follows that $\alpha(\psi_0,c) = (1,c)$.  In particular it follows that $\lim_{t \to -\infty} \psi(t) = 1$, as desired.  This completes the proof of the first statement.

The proof of the second statement is similar.  Choose $\delta > 0$ such that the set $W_{loc}^c(0,c_0) = \{ (0,c) \; | \; |c-c_0| < \delta \}$ is a local center manifold of the equilibrium $(0,c_0)$.  Since the $\omega$-limit set of $(\psi_0,c)$ is invariant and uniformly close to $0$, it is a subset of $W^c_{loc}(0,c_0)$.  Since $c$ is constant, it follows that $\lim_{t \to \infty} \psi(t) = 0$, as desired.
\end{proof}

To complete the proof of part 4 of Theorem \ref{thm:main}, it remains to show that $\phi_c$ is in fact initially decreasing --- that is, we have to rule out that $\phi_c$ is not the ``positive" branch of $W^u(1)$.  We only sketch the argument; it is similar to that at the close of the proof of Proposition $\ref{prop:iso}$.  If $\phi_c$ is not initially decreasing, it is initially greater than $1$.  Given any $\eta > 0$, we can choose $x_0$ such that $0 < \phi(x)-1 < \eta$ for all $x \leq x_0$ and such that 
\[
\phi_c(x_0) \geq \phi_c(x_0 - \tau), \ \tau \in [0,r]. 
\]
Since $g$ is $C^1$ on a neighborhood of ${\bf 1}$, our hypotheses on the gradient of $g$ at ${\bf 1}$ imply that, if $\eta$ is small enough, then $\phi_c'(x_0) > 0$.  $\phi'(x)$ will similarly be strictly positive, for $x > x_0$, at least until $\phi(x) = \eta$.  In particular, $\phi_c$ is not close to $\phi_{c_0}$, a contradiction.  

This completes the proof that $\mathcal{U}$ is open and hence completes the proof of Theorem $\ref{thm:main}$.

\section{Behavior at $c_* \in \del \mathcal{U}$}

We now turn to the proof of our second main theorem.  

\begin{proposition}
Assume that $g$ is $C^1$ and that (G3) holds.  For each  $c_* \in \partial \mathcal{U}$ with $c_* > b(\nabla g ({\bf 0 }))$ there are two globally bounded non-constant solutions to $\eqref{eq:wpe}$, $\psi^0$ and $\psi^1$, neither of which are connections from $1$ to $0$, such that $\psi^0(\infty) = 0$ and $\psi^1(-\infty) = 1$.
\end{proposition}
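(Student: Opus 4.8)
The plan is to take a sequence $c_k \in \mathcal{U}$ with $c_k \to c_* \in \partial \mathcal{U}$, consider the corresponding fronts $\phi_k = (\phi_-)_{c_k}$, and extract limits in two different ways to build $\psi^1$ and $\psi^0$ respectively. Since $\mathcal{U}$ is open relative to $[b(\nabla g(\mathbf{0})),\infty)$ (Theorem~\ref{thm:main}, part 4) and $c_* > b(\nabla g(\mathbf{0}))$, we may assume all $c_k$ lie in $\mathcal{U}$, so each $\phi_k$ is a genuine front connecting $1$ to $0$. By Lemma~\ref{lem:G3}, since (G3) holds and each $\phi_k$ satisfies \eqref{eq:negbc} and certainly does not approach $\pm\infty$, there is a uniform bound $M$, independent of $k$ and of $c_k$, with $|\phi_k(x)| \le M$ for all $x$.

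\emph{Construction of $\psi^1$.} Translate each $\phi_k$ so that $\phi_k(0) = 1/2$ (possible by continuity and the boundary conditions). By Proposition~\ref{prop:uccs}, a subsequence of $\phi_k$ converges, uniformly on compact sets, to a bounded solution $\psi^1$ of \eqref{eq:wpe} at speed $c_*$, with $\psi^1(0) = 1/2$ (so $\psi^1$ is nonconstant). Since each $\phi_k$ is the initially decreasing branch $\phi_-$ of $W^u(1)$ at $c_k$, one shows $\psi^1(-\infty) = 1$: the tail estimates near $1$ are uniform in $c$ for $c$ near $c_*$ (Proposition~\ref{prop:bdycond}, part 1, or directly via the unstable manifold description in Proposition~\ref{prop:umfact} and Lemma~\ref{lem:W^u}), so the backward orbit of $\psi^1$ stays in a small neighborhood of $1$ and limits onto $1$. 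Now $\psi^1$ cannot itself be a front: if $\psi^1$ connected $1$ to $0$, then $\psi^1 = \phi_-$ at speed $c_*$ would be an initially decreasing front, forcing $c_* \in \mathcal{U}$; but $\mathcal{U}$ is open, contradicting $c_* \in \partial\mathcal{U}$. (Here one must argue $\psi^1$ is indeed the initially decreasing branch at $c_*$, which follows because it is a $C^1$-limit of initially decreasing solutions through $1/2$ and stays $\le M$; compare the end of the proof of Proposition~\ref{prop:iso}.) Hence $\psi^1(\infty) \ne 0$, so $\psi^1$ is a bounded nonconstant solution with $\psi^1(-\infty) = 1$ that is not a connection from $1$ to $0$.

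\emph{Construction of $\psi^0$.} Translate the $\phi_k$ instead so that $\phi_k(0) = 1/2$ as well — but now exploit the behavior near $0$. Because $c_* > b(\nabla g(\mathbf{0}))$, the equilibrium $0$ is (exponentially) stable for \eqref{eq:wpe} at $c_*$, and by Proposition~\ref{prop:unif_basin} there is a ball of radius $\varepsilon$ and a $\delta$ so that $0$'s basin contains that ball uniformly for $|c - c_*| < \delta$. Each front $\phi_k$ enters and remains in this ball for $x$ large; let $x_k$ be the last $x$ with $\phi_k(x) = \varepsilon$ (or with $\|(\phi_k)_x - 0\| = \varepsilon$ in $C$), so $x_k \to \infty$ as $k \to \infty$ because on any fixed compact set $\phi_k \to \psi^1 \not\to 0$. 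Re-translate: set $\tilde\phi_k(x) = \phi_k(x + x_k)$. Then $\tilde\phi_k$ satisfies \eqref{eq:wpe} at speed $c_k$, is bounded by $M$, and has $\|(\tilde\phi_k)_0\| = \varepsilon$, so the limit is nonconstant. By Proposition~\ref{prop:uccs} again, a subsequence converges uniformly on compact sets to a bounded nonconstant solution $\psi^0$ at speed $c_*$, and since $\tilde\phi_k(x) \to 0$ as $x \to +\infty$ uniformly (they lie in the basin past $x = 0$ in the new coordinates) one gets $\psi^0(\infty) = 0$. Again $\psi^0$ is not a front: if it were, it would again be $\phi_-$ at $c_*$ and put $c_*$ in the open set $\mathcal{U}$. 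So $\psi^0(-\infty) \ne 1$, and $\psi^0$ is the desired second solution, distinct from $\psi^1$ because $\psi^1(-\infty) = 1$ while $\psi^0(-\infty) \ne 1$.

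\emph{Main obstacle.} The delicate point is the uniformity as $c \to c_*$: one needs that the ``entering the basin of $0$'' radius $\varepsilon$ and the ``staying near $1$'' neighborhood can be chosen independently of $c$ in a neighborhood of $c_*$, which is exactly what Propositions~\ref{prop:unif_basin} and \ref{prop:bdycond} provide, and that the two translation normalizations genuinely separate in the limit (i.e., $x_k \to \infty$, which relies on $\psi^1 \ne 0$, hence on $c_* \notin \mathcal{U}$). Also one must be careful that $\psi^0$ and $\psi^1$ as extracted are nonconstant — this is guaranteed by the normalization $\phi_k(0) = 1/2$ for $\psi^1$ and $\|(\tilde\phi_k)_0\| = \varepsilon$ for $\psi^0$ — and that they do not accidentally coincide, which follows from their distinct limits at $-\infty$. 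Everything else is a routine application of the compactness result Proposition~\ref{prop:uccs} together with the uniform bound from Lemma~\ref{lem:G3}.
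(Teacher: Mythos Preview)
Your proposal is correct and follows essentially the same strategy as the paper: take $c_k \in \mathcal{U}$ with $c_k \to c_*$, use Lemma~\ref{lem:G3} for a uniform bound, and extract limits under two different translation normalizations (one pinning near $1$, one pinning near the last exit from a uniform basin of $0$) via Proposition~\ref{prop:uccs} and Proposition~\ref{prop:unif_basin}. The only notable difference is that the paper establishes $\psi^1(-\infty)=1$ more directly: since each $\phi_k$ is initially decreasing until it crosses zero (Lemma~\ref{lem:decrease}), the normalization $\phi_k(0)=1/2$ forces $\phi_k$ to be decreasing on $(-\infty,0]$, hence so is $\psi^1$, and (G1.1) then pins $\psi^1(-\infty)=1$ --- avoiding the uniformity-in-$c$ tail estimate near $1$ that your appeal to Proposition~\ref{prop:bdycond} would require.
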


\begin{proof}
Let $c_* \in \del \mathcal{U}$, choose $\{c_k\} \subset \mathcal{U}$ with $c_k \to c_*$ and let $\phi_k$ denote the corresponding initially decreasing fronts with wave speed $c_k$.  The $\phi_k$ are uniformly bounded by some $M$ by Lemma $\ref{lem:G3}$.  By Proposition \ref{prop:uccs}, a uniform-on-compact-sets limit $\phi_*$ is a solution of \eqref{eq:wpe} that is defined for all $x \in \R$.  Since $\mathcal{U}$ is open and $c_* \in \del \mathcal{U}$ by hypothesis, $\phi_*$ does not connect $1$ to $0$ --- that is, $\phi_*$ does not satisfy both boundary conditions $\phi_*(-\infty) = 1$ and $\phi_*(\infty) = 0$.  We claim, though, that by translating the $\phi_k$ properly we can arrange for $\phi_*$ to satisfy either of these conditions.  

Since all of the $\phi_k$ are initially monotone and decreasing until they cross $0$, we can shift all of the $\phi_k$ to satisfy $\phi_k(0) = 1/2$, with $\phi_k$ decreasing for $x \leq 0$.  Then $\phi_*$ satisfies these same conditions, and $\phi_*(-\infty) = 1$.  In this case, $\phi_*$ is the $\psi^1$ of the theorem.  

Recall that we are assuming that $c_* > b(\nabla g({\bf 0}))$.  By Proposition \ref{prop:unif_basin}, then, there is some $\epsilon$ such that for all large $k$, the basin of attraction of $0$ for $c_k$ (and $c_*$) contains the ball in $C$ of radius $\epsilon$.  Each $\phi_k$ approaches $0$ and so there is, for each $k$, a final time when $|\phi_k(x)| = \frac{\epsilon}{2}$.  Translate each $\phi_k$ to make this final time at time $0$.  Then $\phi_*(0) = \pm \epsilon/2$, and $|\phi_*(x)| \leq \frac{\epsilon}{2}$ for all $x \in [0,1]$.  By Proposition $\ref{prop:unif_basin}$, $\phi_*$ must approach $0$ as $x \to \infty$.  $\phi_*$ is not, however, identically zero.  In this case, $\phi_*$ is the $\psi^0$ of the theorem.   
\end{proof}

\section{Examples}\label{EXAMPLES}

In section \ref{subsec:ex} we mentioned \cite{peletier:2004}, in which Peletier and Rodriguez study the equation 
\begin{equation}\label{eq:pareqcov}
u_n' = -u_n + h(u_{n-1}), \ \ h(u) = \left\{\begin{array}{cc} 2u, & u \leq 1/2 \\ 1, & u \geq 1/2 \end{array} \right.
\end{equation}
(we have changed variables to put \eqref{eq:pareqcov} into the framework of equation \eqref{MODEQ}).  In our notation, for this equation $c(\nabla g({\bf 0})) = c(\beta) = c(-1,2) \approx 4.31107$.  Using a mixture of numerical and analytical techniques, in \cite{peletier:2004} the authors explore the behavior of fronts as $c$ decreases.  In particular, they find values $0 < c_{unb}  < c_{bif} < c(-1,2)$ such that initially decreasing solutions of \eqref{eq:wpe} with $\phi(-\infty) = 1$ 
\begin{itemize}
\item approach $0$ monotonically for $c \geq c(-1,2)$; 
\item approach $0$ nonmonotonically for $c \in (c_{bif},c(-1,2))$; 
\item are bounded but do not approach $0$ for $c \in [c_{unb},c_{bif}]$;
\item are unbounded for $c < c_{unb}$.
\end{itemize}
The value $c_{bif}$ is where the leading roots of the characteristic equation cross the imaginary axis.  For this example, therefore, in our notation we have $c_m = c(\nabla g({\bf 0}))$ and $c_f = b(\nabla g({\bf 0}))$ --- otherwise put, the bifurcation values $c_m$ and $c_f$ are exactly what one would expect given only the linearization of \eqref{eq:wpe} at $0$.  

We mention again the work of Hsu, Lin, and Shen in \cite{hsu:1999}, which includes a thorough discussion of the behavior of traveling wave solutions of various speeds for a particular family of lattice differential equations with unidirectional coupling.  In the equations considered in \cite{hsu:1999} there are three equilibria $\mu^- < \mu^0 <\mu^+$.  For wave speeds large enough there is a monotone traveling wave solution connecting $\mu^0$ to $\mu^+$.  Under appropriate conditions, as the wave speed decreases the sequence of wave behaviors is reminiscent of that for \eqref{eq:pareqcov}: we have monotone waves connecting $\mu^0$ to $\mu^+$, then bounded nonmonotone waves oscillating about $\mu^0$ as $x \to -\infty$, then (for a particular wave speed) a monotone wave connecting $\mu^-$ to $\mu^+$, and then waves satisfying $\phi(\infty) = \mu^+$ that are unbounded below.  Somewhat more generally, there is a range of wave speeds such that there is no connection from $\mu^0$ to $\mu^+$, but such that waves are bounded.  (See Theorem A in \cite{hsu:1999}).  

Similarly, to return to the discussion of \eqref{eq:pareqcov}, the range $c \in [c_{unb},c_{bif}]$ corresponds to bounded traveling waves that do not connect $1$ to $0$.  What is apparently happening here (expressed in terms of our notation) is that, as $c$ drops below $c_f$, the initially decreasing branch $\phi_-$ of the unstable manifold at $1$ (what is called, in the $c = c_f$ case, $\psi^1$ in Theorem \ref{thm:main2}) is connecting $1$ to a periodic orbit about $0$.  (Indeed, for \eqref{eq:pareqcov}, Peletier and Rodriguez find a subinterval of $[c_{unb},c_{bif}]$ for which waves are eventually periodic, and there is an analogous interval of wave speeds for some of the class studied in \cite{hsu:1999}.  Waves that are literally eventually periodic, though, arise as artifacts of the piecewise linear feedback functions used in both \cite{peletier:2004} and \cite{hsu:1999}.)  It seems, though, that there should be examples of \eqref{MODEQ} for which initially decreasing solutions of (\ref{eq:wpe},\ref{eq:negbc}) are unbounded for all $c < c_f$, and examples for which $\psi^1$ connects $1$ to a nonzero equilibrium solution.   

Our goal in this section, broadly speaking, is to suggest the breadth of possible behaviors of solutions of (\ref{eq:wpe},\ref{eq:negbc}) as $c$ varies.  We will confine our attention to the following specialized single-delay problem: 
\begin{equation}\label{eq:odv}
c \phi'(x) = \phi(x) - f(\phi(x-1)). 
\end{equation}
In this particular context, the assumptions (G1) may be expressed as follows (recall section \ref{subsec:ex}): $f$ is a $C^1$ function such that
\begin{itemize}
\item $f(0) = 0$, $f(1) = 1$, and $f(s) \ne s$ for $s \in (0,1)$;
\item $f'(0) > 1 > f'(1) \geq 0$;
\item There is some $\beta_1 > 1$ such that $ s < f(s) < \beta_1 s$ for $s \in (0,1)$.  
\end{itemize}

Note that, in this context, since $g(s_0,s_1) = -s_0 + f(s_1)$ we have 
\[
c(\nabla g ({\bf 0})) = c(-1,f'(0)) \ \mbox{and} \ b(\nabla g ({\bf 0})) = b(-1,f'(0)).
\]
Note also that, for \eqref{eq:odv}, it is sufficient for (G3) to hold that $f$ be bounded.  

We present the following three examples.  The first two show that the linearization at $0$ does not, in general, provide exact information about the locations of $c_m$ and $c_f$.  

\begin{itemize} 
\item[A:] Given $f'(0)$ and $z > 0$, $f$ can be chosen so that $c_m > c(-1,f'(0)) + z$.
\item[B:] $f$ can be chosen so that $c_f$ lies strictly between $b(-1,f'(0))$ and $c_m$. 
\item[C:] Non-initially decreasing fronts can indeed exist.  
\end{itemize}

\subsection{Example A}

Let $\alpha > 1$ and $z > 0$ be given and let $\gamma$ be so large so that $c(-1,\gamma) > c(-1,\alpha) + z$.  We will define a two parameter family of functions $f_{\tau,\eta}$ which satisfy (G1) as follows.  Choose and fix functions $f_{0,0}$ and $f_{1,1}$ which satisfy (G1) with $f_{0,0}'(0) = \alpha$ and $f_{1,1}'(0) = \gamma$.  Moreover, let $f_{0,0}$ and $f_{1,1}$ be concave down so that $\alpha$ and $\gamma$ also play the role of $\beta_1$ for $f_{0,0}$ and $f_{1,1}$, respectively.  It follows from Theorem $\ref{thm:main}$ that for $c \ge c(-1,\alpha)$ the lattice equation $\eqref{eq:odv}$ with $f$ given by $f_{0,0}$ has a monotone front connecting $1$ to $0$.  On the other hand, for $c < c(-1,\gamma)$, the equation $\eqref{eq:odv}$ with $f$ given by $f_{1,1}$ has no monotone front connecting $1$ to $0$ (Proposition \ref{prop:lowerlimit}).  

For $(\tau,\eta) \in (0,1] \times [0,1]$, let $\rho_{\tau,\eta}$ be a smooth cutoff function which takes values in $[0,1]$, is increasing, and satisfies
\[ 
\rho_{\tau,\eta}(s) = \left\{ \ba{ll} 0 & s \le \tau/2; \\ \\ \eta & s \ge \tau. \ea \right. 
\]
Define a two-parameter family of feedback functions $f_{\tau,\eta}$ by
\[ 
f_{\tau,\eta}(s) := (1-\rho_{\tau,\eta}(s))f_{0,0}(s) + \rho_{\tau,\eta}(s)f_{1,1}(s)
\] 
--- that is, $f_{\tau,\eta}$ is equal to $f_{0,0}$ for $s \le \tau/2$ and equal to the convex combination ``$\eta$ of the way from $f_{0,0}$ to $f_{1,1}$" for $s \ge \tau$.  Such an $f_{\tau,\eta}$ is illustrated in Figure 1.  

\begin{center}
\includegraphics[width = 6in, height = 4in]{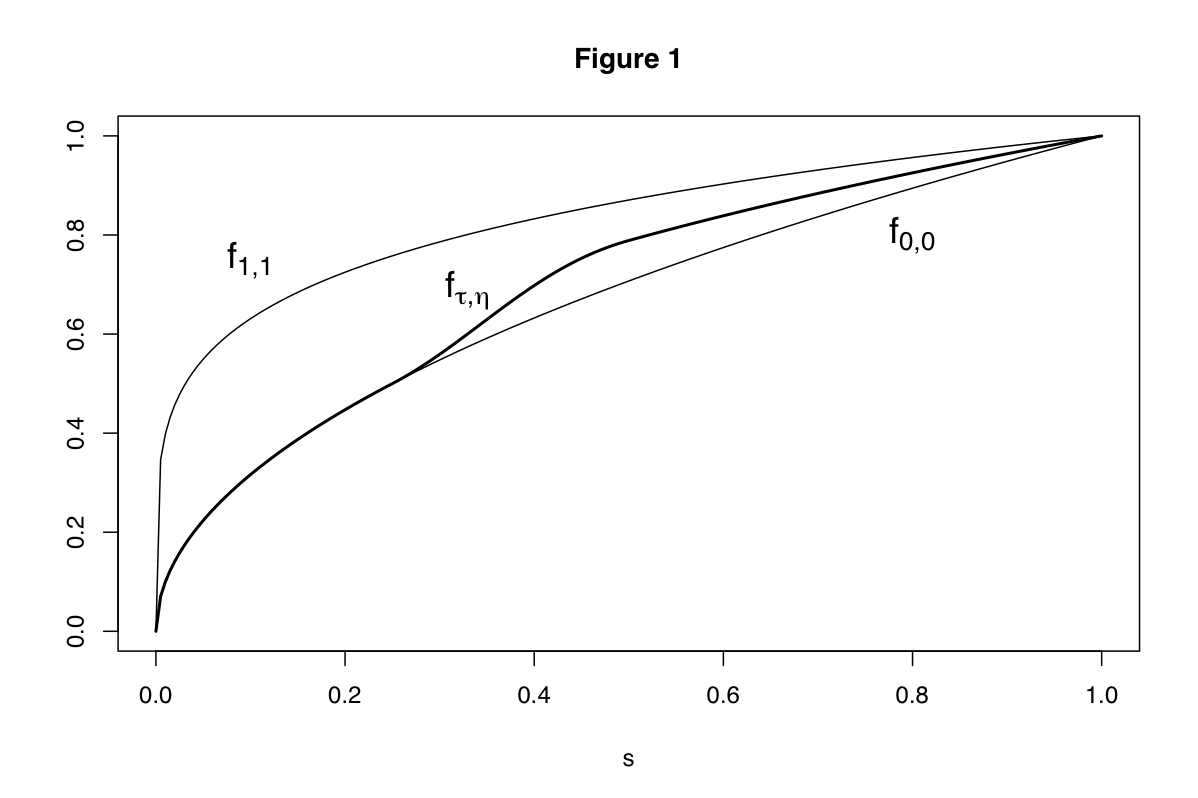}
\end{center}

Note that $f_{\tau,\eta}'(0) \equiv \alpha$ for $(\tau,\eta) \in (0,1] \times [0,1]$.  When $\eta = 0$ we have $f_{\tau,0} \equiv f_{0,0}$ and thus $\eqref{eq:odv}$ with $f = f_{\tau,0}$ admits a monotone front for $c \ge c(-1,\alpha)$.  

We claim the following: For each $c \in [c(-1,\alpha),c(-1,\gamma))$ --- in particular, for some $c > c(-1,\alpha) + z$ --- there is some $\tau_*$ and some $\eta \in [0,1]$ such that $\eqref{eq:odv}$ with wave speed $c$ and $f = f_{\tau_*,\eta}$ admits no monotone front connecting $1$ to $0$.  For otherwise, an argument similar to that in the proof of Propositions \ref{prop:uccs} and $\ref{prop:lim}$ would imply that $\eqref{eq:odv}$ with wave speed $c$ and $f = f_{1,1}$ admits a monotone front connecting $1$ to $0$; this contradicts Proposition $\ref{prop:lowerlimit}$.  (The proof of Proposition \ref{prop:uccs} has to be modified to apply to a convergent sequence of feedback functions rather than a convergent sequence of wave speeds.) 

The point is that $f'(0)$ alone provides only a lower bound on $c_m$; in general, all we know about $c_m$ is that it lies somewhere in the interval $[c(-1,f'(0)), c(-1,\beta_1)]$.  If $f$ is concave down, of course, since $\beta_1$ can be taken equal to $f'(0)$ we necessarily have $c_m = c(-1,f'(0))$.  

\subsection{Example B}

In this example we show that $c_f$ can be strictly greater than $b(\nabla g({\bf 0})) = b(-1,f'(0))$.  Similarly to the last example, the point is that the linearization of \eqref{eq:odv} at $0$ does not determine the range of wave speeds for which fronts exist.  (We remark that, while it seems plausible both that $\mathcal{U}$ is connected and that $\mathcal{U}$ has lower bound $b(-1,f'(0))$, we have not proven either of these claims.)   Since our examples will use smooth bounded feedback functions $f$, hypothesis (G3) holds and $c_f > b(-1,f'(0))$ implies that the solutions $\psi^1$ and $\psi^0$ described in Theorem \ref{thm:main2} exist.  

We first consider equation \eqref{eq:odv} with feedback function $f = f_\epsilon$, where $\epsilon \in (0,1/2)$ and $f_\epsilon$ is a smooth, odd, increasing function satisfying  
\[
f_\epsilon(s) = \left\{ \begin{array}{cc} 2s, & s \in [0,\epsilon]; \\
1, & s \geq 2\epsilon. \end{array} \right.
\]
For definiteness, let us take $c = 5$.  Observe that that $5 > c(-1,f'(0)) = c(-1,2) > b(-1,2)$.  We show that, for $\epsilon$ small enough, $f = f_\epsilon$, and $c = 5$, the initially decreasing branch $\phi_-$ of the unstable manifold of $\eqref{eq:odv}$ at $1$ coincides, after finite time, with a nontrivial periodic solution oscillating about zero.  

Define
\[ \Sigma := \{ \psi \in C \: | \: \psi(0) = 2\eps \mbox{ and } \psi(x) \ge 2\eps \mbox{ for } x \in [-1,0]\}, \]
where the notation is chosen to be suggestive of a Poincare section.  Define $\Sigma' \subset \Sigma$ to be the subset of $\psi \in \Sigma$ such that $\psi_t \in \Sigma$ for some $t \ge 1$.  Define the first return time
$\tau : \Sigma' \to [1,\infty)$ by $\tau(\psi) = \inf_{t \ge 1} \{t \: | \; \psi_t \in \Sigma\}$.  Define the first return map 
$P : \Sigma' \to \Sigma'$ by $P(\psi) = \psi_{\tau(\psi)}$.  Fixed points of $P$ correspond to periodic orbits of $\eqref{eq:odv}$. 
We claim that $P$ has a fixed point $\phi_*$; moreover, the initially decreasing branch of the unstable manifold at one, which we now denote by $\phi$, coincides, after a finite time, with $\phi_*$.  The two observations, (i) $P$ takes at most one value and (ii) $\phi \in \Sigma'$, imply the existence of a fixed point $\phi_* = P(\phi)$ and the fact that $\phi_t$ is eventually periodic.  To see that $P$ cannot take more than one value, suppose that $\psi^1$ and $\psi^2$ are both in the range of $P$.  Let $\psi^1(t)$ and $\psi^2(t)$ denote their forward continuations under $\eqref{eq:odv}$ with $f = f_\eps$.  Since $f_\eps$ is constant for $x \ge 2\eps$ and $\psi^1,\psi^2 \in \Sigma$, it follows that $\psi^1$ and $\psi^2$ agree on $[0,1]$.  Since $\psi^1(t)$ and $\psi^2(t)$ agree on an interval of length one, it follows that they agree for all $t \ge 0$.

We now establish that $\phi_- \in \Sigma'$.  Observe that (a particular translate of) $\phi$ is given by 
\[
\phi(x) = (2\epsilon - 1)e^{x/5} + 1, \ \ x \leq 0.  
\]
Note that $\phi_0 \in \Sigma$.  To show that $\phi_0 \in \Sigma'$, we claim that there is some $x_* > 0$ such that $\phi_-(x_*) = -2\epsilon$ and $\phi_-(x) \leq -2\epsilon$ for all $x \in [x_*-1,x_*]$; since the feedback in $\eqref{eq:odv}$ is odd when $f = f_\epsilon$, by symmetry we have that $\phi_-(2x_*) = 2\epsilon$ and $\phi_-(x) \geq 2\epsilon$ for all $x \in [2x_* - 1,2x_*]$.  Thus the translation of $\phi_{2x_*} \in \Sigma$ and hence $\phi_0 \in \Sigma'$.

We now prove the claim.  For $x \in [0,1]$ $\phi_-$ satisfies the ODE 
\[
\phi_-'(x) = \frac{\phi_-(x) - 1}{5}
\]
and so is still given by $\phi_-(x) = (2\epsilon - 1)e^{x/5} + 1$.  Therefore we have, for $\epsilon$ small enough, that $\phi_-(x) = -2\epsilon$ for some $x = x_0 \in (0,1)$ --- in particular, 
\[
\phi_-(x) = -2\epsilon \ \mbox{for} \ x_0 := 5\ln\left( \frac{1 + 2\epsilon}{1 - 2\epsilon}\right) < 1.
\]
We also have $\phi_-(1) = (2\epsilon - 1)e^{1/5} + 1 < -2\epsilon$.  For $\epsilon$ small, $x_0$ is small and $\phi_-(1)$ is approximately $-0.22$.  We wish to show that $-1 <\phi_-(x) \leq -2\epsilon$ for all $x \in [x_0,x_0+1]$, for then $\phi_-'(x_0+1) = (\phi_- (x_0+1) + 1)/5 > 0$, and $\phi_-$ will continue to increase until the time $x_*$ in our claim.  For $\epsilon$ small, $\phi_-(x)$ is certainly in the desired range for $x \in [x_0,1]$.  Since $f_\epsilon$ is increasing, for $x \in [1,1+x_0]$ we have that $f_\epsilon(\phi_-(x-1))$ is between $-1$ and $1$.  Thus, crudely, for $x \in [1,1+x_0]$ we have
\[
\frac{\phi_-(x)-1}{5} \leq \phi_-'(x) \leq \frac{\phi_-(x) + 1}{5}.
\]
By taking $\epsilon$ small enough and comparing $\phi_-(x)$ to the sub- and supersolutions suggested by the above inequality on the small interval $[1,1+x_0]$, we obtain the desired result.  Thus, even though $5 > b(-1,f_\epsilon'(0))$, $5$ is clearly less than $c_f$ when $f = f_\epsilon$ and $\epsilon$ is small enough.  

We remark briefly that we have used a very specialized $f_\eps$ that guarantees that $P$ takes only one value in order to simplify our analysis.  However, we believe the phenomenon of fronts limiting to connections between equilibria and periodics to be more robust.  An argument similar to that used above can, in many cases, show that $\phi \in \Sigma'$ and hence that the initially decreasing branch of the unstable manifold exhibits recurrent dynamics.  The presence of a periodic solution as a potential $\omega$-limit set for $\phi$ can, in many cases, be shown to exist via Schauder's Theorem together with the fact that $P$ is compact.

We now turn an example of a different kind of behavior as $c$ reaches and drops below $c_f$ (we have not rigorously proven what we are about to describe).   Consider equation \eqref{eq:odv} with $f = f_\alpha$ smooth, increasing, and satisfying the following, where $\alpha > 0$ and $\epsilon \in (0,\alpha/2)$:
\[
f_\alpha(s) = \left\{ \begin{array}{cc} 1, & s \geq 1/2; \\ 2s, & s \in [-\alpha/2+\epsilon,1/2]; \\ -\alpha, & s \leq -\alpha/2. \end{array} \right.
\]

We conjecture that $\alpha$ and $f_\alpha$ can be chosen so that the following hold:

\begin{itemize}
\item $c_f$ is as close to $c_m = c(-1,2)$ as desired;
\item At $c = c_f$, $\phi(x) = -\alpha$ for all sufficiently large $x$;
\item For $c < c_f$, $\phi(x) \to -\infty$ as $x \to \infty$.  
\end{itemize}
In this example, therefore, $\psi^1$ connects two equilibria (and, borrowing notation from \cite{peletier:2004}, $c_{bif} = c_{unb})$.  

There is clearly an equilibrium solution at $-\alpha$.  Any solution of \eqref{eq:odv} with $\phi(x_0) < -\alpha$ and $\phi(x) \geq \phi(x_0)$ for all $x < x_0$ will satisfy $\phi(\infty)= -\infty$.  Also, as $c \to c_f$ from above, the minimum value of the front $\phi_c$ connecting $1$ to $0$ will decrease continuously in $c$.   If $c_f$ is small enough to allow this minimum value to approach $-\alpha$, then at $c = c_f$ this minimum value is equal to $\alpha$ and $\phi_{c_f}(x) = -\alpha$ for all sufficiently large $x$, and $c = c_f$.   

We have not proven rigorously that $c_f$ is actually low enough for this to work --- that is, we have not ruled out a narrow parameter range for which $\phi_c(x) > -\alpha$ for all $x$ but $\phi_c$ approaches a periodic solution rather than the equilibrium at $0$.  Numerical evidence with $\alpha = 1/4$ and $\epsilon = 0$, though, suggests that $c_f \approx 2.1$ and that for all $c < c_f$ the initially decreasing branch of the unstable manifold at $1$ is unbounded.  Note that $2.1$ is well above $b(-1,2) \approx 1.65$.  We could, in fact, make $c_f$ as close as we like to $c_m = c(-1,2)$ by taking $\alpha$ close to $0$.  

Figure 2 shows the initially decreasing branch of the unstable manifold at $1$ for various values of $c$ near $2.1$.  Note that, on compact subsets, the solutions that ultimately approach $0$ appear to be converging to a solution that approaches $-\alpha$ --- that is, the solution $\psi^1$.  
 
\begin{center}
\includegraphics[width = 6in, height = 4in]{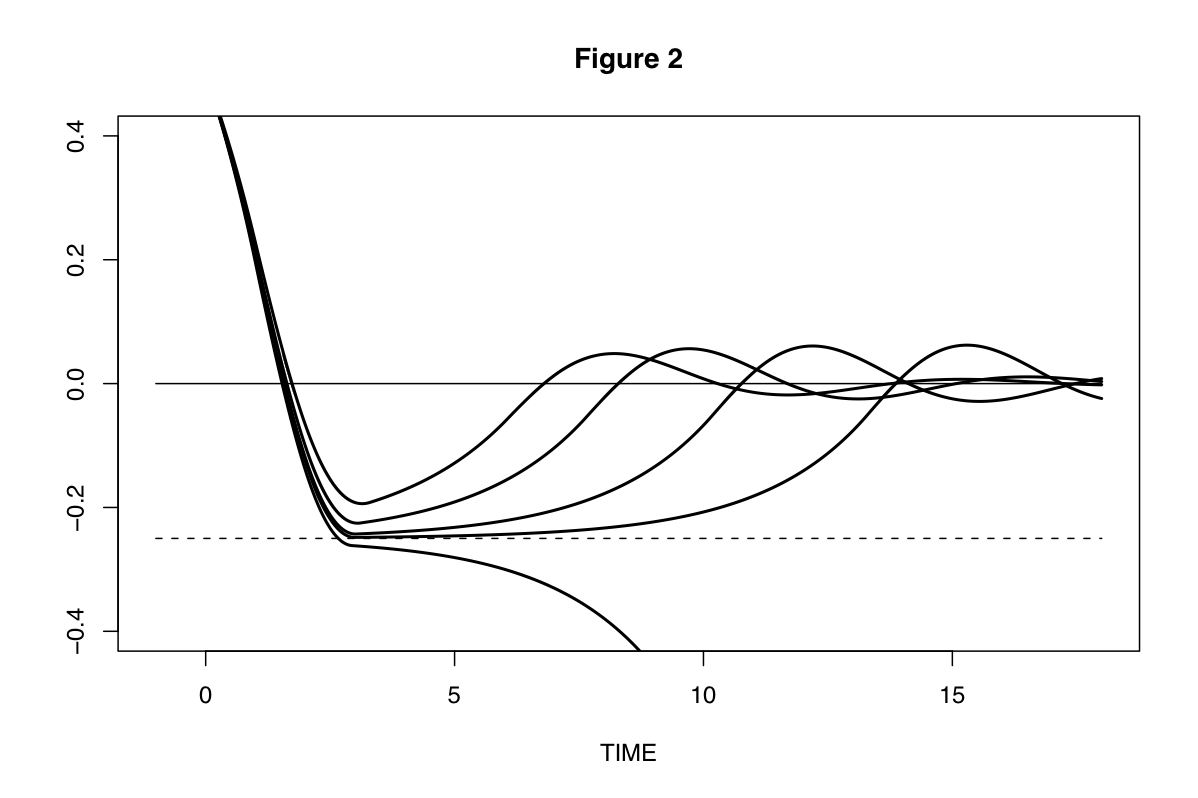}
\end{center}

\subsection{Example C}

While our focus in this paper is on initially decreasing fronts, we emphasize that non-initially decreasing fronts can exist: in the notation of Lemma \ref{lem:W^u}, $\phi_+(\infty) = 0$ is possible.  We present a concrete example, somewhat informally.  

We consider \eqref{eq:odv} where $f$ is an increasing function satisfying 
\[
f(s) = \left\{ \begin{array}{cc} 2s, & s \leq 1/2 - \epsilon; \\ 1, & s \in [1/2,3/2]; \\ 5, & s \geq 3/2 + \epsilon \end{array} \right.
\]
where $\epsilon > 0$ and such that we can take $\beta_1$ slightly larger than $2$.  We take $c = 5 > c(-1,\beta_1))$; in this case there is a decreasing front $\phi_-$.   Since $f(s) = 1$ for $s$ near $1$, for large negative $x$ any solution of \eqref{eq:odv} with $\phi(-\infty) = 1$ satisfies the ordinary differential equation 
\[
\phi'(x) = \frac{\phi(x) - 1}{5}
\]
and is therefore of the form 
\[
\phi(x) = C e^{x/5} + 1.
\]
Taking $C < 0$ yields the initially decreasing branch $\phi_-$ of the unstable manifold, and taking $C > 0$ yields the initially increasing branch $\phi_+$ of the unstable manifold.  In particular let us take the function $\phi_+$ defined by 
\[
\phi_+(x) = \frac{1}{2}e^{x/5} + 1
\]
for all $x \leq 0$.  Observe that $\phi_+(0) = 3/2$.   For $x > 0$ let us continue $\phi_+$ as a solution of the delay equation \eqref{eq:odv}.  We wish to show that $\phi_+(\infty) = 0$.  We will do this by proving the following claim: there is some $x_* > 0$ such that $\phi_+(x_*) < 1$ and $\phi_+(x) \in [1/2 ,3/2]$ for all $x \in [x_*-1,x_*]$.  It follows that, for all $x \geq x_*$, $\phi_+$ coincides with a translate of $\phi_-$ and so is a front.  

$\phi(x)$ will continue to be of the form $(1/2)e^{x/5} + 1$ for $x \in [0, 1]$.   Observe that $\phi(1) = (1/2)e^{1/5} + 1 \approx 1.61$.  Let us take $\epsilon < \phi(1) - 3/2$ so that $x_1 = \inf\{x > 0 \ : \ \phi(x) = 3/2 + \epsilon \ \}$ is less than $1$.  Observe that, for all $x \in [1,x_1 + 1]$, the quantity $\phi(x-1)$ lies between $3/2$ and $3/2 + \epsilon$ and so $f(\phi(x-1))$ lies between $1$ and $5$.  If $\epsilon$ is small, we have that $x_1$ is close to $0$ and that $\phi(1+x_1)$ is close to $\phi(1)$.  In particular, we may assume that $\phi(x) > 3/2 + \epsilon$ for all $x \in (x_1,x_1 + 1]$.  

Let us write
\[
x_2 = \inf \{ \ x > x_1+1 : \ \phi(x) = 3/2 + \epsilon \ \}.
\]
On the interval $[x_1+ 1,x_2+1]$, $\phi(x)$ will satisfy the ODE
\[
\phi'(x) = \frac{\phi(x)}{5} - 1.
\]
In particular, $\phi(x_2 + 1)$ will be equal to 
\[
(\phi(x_2) - 5)e^{1/5} + 5 \approx -\frac{7}{2}e^{1/5} + 5 \approx 0.725.
\]
For $x \in [x_2,x_2 + 1]$, we have (crudely) $\phi'(x) \leq -3$; and for $x \in [x_2 +1,x_2 + 2]$, we have $\phi_+'(x) \geq (\phi_+(x) - 5)/5$.  Therefore, in particular, for $\epsilon$ small enough, $\phi_+(x)$ reaches $3/2$ shortly after $x_2$ and does not reach $1/2$ too soon after $x_2 +1$.  Thus there is some $x_* > x_2 +1$ such that 
$\phi_+(x_*) < 1$ and $\phi_+(x) \in [1/2,3/2]$ for all $x \in [x_*-1,x_*]$.  This proves our claim.  

Figure 3 illustrates the two traveling wave solutions of the equation just described, with a small $\epsilon$.

\begin{center}
\includegraphics[width = 6in, height = 4in]{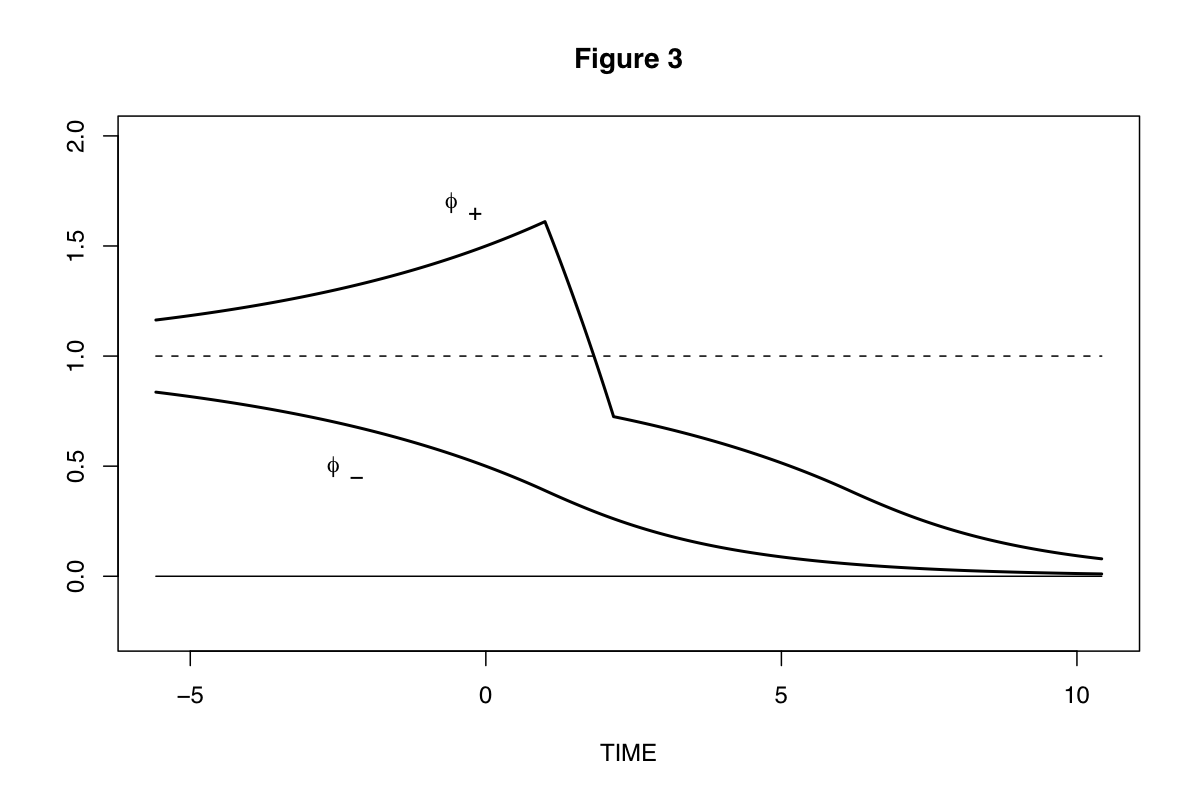}
\end{center}

\bibliography{TWrefs_ADHBBK}

\end{document}